\newcommand{\stkout}[1]{\ifmmode\text{\sout{\ensuremath{#1}}}\else\sout{#1}\fi}
\newtheorem{lemma}{Lemma}[section]
\newtheorem{theorem}{Theorem}[section]
\newtheorem{proposition}{Proposition}[section]
\newtheorem{corollary}{Corollary}[section]
\theoremstyle{definition}
\newtheorem{definition}{Definition}[section]
\newtheorem{assumption}{Assumption}[section]
\newtheorem{example}{Example}[section]
\theoremstyle{remark}
\newtheorem{remark}{Remark}[section]
\numberwithin{theorem}{section}
\numberwithin{equation}{section}
\crefname{section}{Section}{Sections}
\crefname{appsec}{Appendix}{Appendices}
\crefname{subsection}{Section}{Sections}
\crefname{condition}{Condition}{Conditions}
\crefname{hypothesis}{Hypothesis}{Conditions}
\crefname{assumption}{Assumption}{Assumptions}
\crefname{lemma}{Lemma}{Lemmas}
\crefname{fact}{Fact}{Facts}
\Crefname{figure}{Figure}{Figures}
\newcommand{\vertiii}[1]{{\left\vert\kern-0.25ex\left\vert\kern-0.25ex\left\vert #1
    \right\vert\kern-0.25ex\right\vert\kern-0.25ex\right\vert}}
\newcommand{\sB}{{\mathscr{B}}}  
\newcommand{\cC}{{C}}   
\newcommand{\cI}{{\mathcal{I}}}  
\newcommand{\sK}{{\mathscr{K}}}  
\newcommand{\sL}{{\mathscr{L}}}  %
\newcommand{\cP}{{\mathcal{P}}} 
\newcommand{\cL}{{\mathcal{L}}} 
\newcommand{\sD}{\mathscr{D}}
\newcommand{\RR}{\mathds{R}}
\newcommand{\Rd}{{\mathds{R}^{d}}}
\newcommand{\D}{\mathrm{d}}
\newcommand{\E}{\mathrm{e}}
\newcommand{\Ind}{\mathds{1}}   
\newcommand{\Id}{\bm{I}}
\newcommand{\df}{\coloneqq}
\DeclareMathOperator*{\trace}{Tr}
\DeclareMathOperator*{\dist}{dist}
\DeclareMathOperator*{\diam}{diam}
\newcommand{\grad}{\nabla}
\newcommand{\abs}[1]{\lvert#1\rvert}
\newcommand{\norm}[1]{\lVert#1\rVert}
\definecolor{dmagenta}{rgb}{.4,.1,.5}
\definecolor{dblue}{rgb}{.0,.0,.5}
\definecolor{mblue}{rgb}{.0,.0,.7}
\definecolor{ddblue}{rgb}{.0,.0,.4}
\definecolor{dred}{rgb}{.7,.0,.0}
\definecolor{dgreen}{rgb}{.0,.5,.0}
\definecolor{Eeom}{rgb}{.0,.0,.5}
\newcommand{\ttl}{\Large Fine boundary regularity for fully nonlinear mixed local-nonlocal problems}
\begin{document}
\title[Boundary regularity]
{\ttl}

\author{Mitesh Modasiya and Abhrojyoti Sen}

\address{
Department of Mathematics, Indian Institute of Science Education and Research, Dr. Homi Bhabha Road,
Pune 411008, India. Email: mitesh.modasiya@students.iiserpune.ac.in; abhrojyoti.sen@acads.iiserpune.ac.in }

\begin{abstract}
We consider Dirichlet problems for fully nonlinear mixed local-nonlocal non-translation invariant operators. For a bounded $C^2$ domain $\Omega \subset \Rd,$ let $u\in C(\Rd)$ be a viscosity solution of such Dirichlet problem. We obtain global Lipschitz regularity and fine boundary regularity for $u$ by constructing appropriate sub and supersolutions coupled with a \textit{Harnack type} inequality. We apply these results to obtain H\"{o}lder regularity of $Du$ up to the boundary.
\end{abstract}
\keywords{Operators of mixed order, viscosity solution, fine boundary regularity, fully nonlinear integro-PDEs, Harnack inequality, gradient estimate}
\subjclass[2020]{Primary: 35D40, 47G20, 35J60, 35B65 }

\maketitle
\tableofcontents

\section{Introduction and main results}
In this article, for a bounded $C^2$ domain $ \Omega \subset \Rd,$ we establish the boundary regularity of the solution $u$ to the operator inequalities
\begin{equation}\label{Eq-1}
\begin{split}
\cL u + C_0 |Du| & \geq -K \; \quad \text{in}\; \Omega,
\\
\cL u - C_0 |Du| & \leq K  \quad \quad\text{in}\; \Omega,
\\
u & =0 \; \quad \quad\text{in}\; \Omega^c,
\end{split}
\end{equation}
where $C_0, K \geq 0$ and $\cL$ is a fully nonlinear integro-differential operator of the form
\begin{align}\label{eq1.1}
    \cL u(x):=\cL[x,u]=\sup_{\theta \in \Theta} \inf_{\nu \in \Gamma}\left\{ \trace  a_{\theta \nu}(x)D^2u(x) + \cI_{\theta \nu}[x,u]\right\},
\end{align}
for some index sets $\Theta , \Gamma$.
The coefficient $a_{\theta \nu}: \Omega \to \RR^{d\times d}$ is a matrix valued function and  $\cI_{\theta \nu}$ is a nonlocal operator defined as
   \begin{align}\label{nonlocal part}
  \cI_{\theta \nu} u(x):=\cI_{\theta \nu}[x,u]=\int_{\Rd} (u(x+y)-u(x) - \Ind_{B_1}(y) D u(x) \cdot y  )N_{\theta \nu}(x,y) \; \D{y}.
   \end{align}
The above operator inequalities stated in \eqref{Eq-1} are motivated by Hamilton-Jacobi equations of the form
\begin{align*}
Iu(x):=\sup_{\theta \in \Theta}\inf_{\nu \in \Gamma}\left\{L_{\theta \nu} u(x) + f_{\theta \nu}(x)\right\}=0,
\end{align*}
where
\begin{align}\label{full operator}
    L_{\theta \nu}u(x)=\trace  a_{\theta \nu}(x)D^2u(x) + \cI_{\theta \nu}[x,u]+b_{\theta \nu}(x)\cdot Du(x),
\end{align}
$b_{\theta\nu}(\cdot)$ and $f_{\theta\nu}(\cdot)$ are bounded functions on $\Omega.$ These linear operators \eqref{full operator} are extended generator for a wide class of $d$-dimensional Feller processes
(more precisely, jump diffusions) and the nonlinear operator $Iu(\cdot)$ has its connection to the stochastic control problems and differential games (see \cites{B12, BK22} and the references therein). The first term in \eqref{full operator} represents the diffusion, the second term represents the jump part of a Feller process, while the third corresponds to the drift term. We refer to \cites{C12, BSW, A} and the references therein for more on the connections between the operators of the form \eqref{full operator} and stochastic differential equations.  
For a precise application of these type of operators in finance and biological models, we refer to \cites{DPLV22, DV21, RTbook} and the references therein.

We set the following assumptions on the coefficient $a_{\theta \nu}(\cdot)$ and the kernel $N_{\theta \nu}(x,y)$, throughout this article.
\begin{assumption}\label{Assmp-1}
\hspace{-2em}
\begin{itemize}
\item[(a)] $a_{\theta\nu} (\cdot)$ are uniformly continuous and bounded in $\overline{\Omega}$, uniformly in $\theta , \nu$ for $\theta \in \Theta, \nu \in \Gamma.$  
Furthermore, $a_{\theta\nu} (\cdot)$ satisfies the uniform ellipticity condition  $\lambda \Id  \leq a_{\theta  \nu} (\cdot) \leq \Lambda \Id$ for some $0 < \lambda \leq \Lambda$  where $\Id$ denotes the $d\times d$ identity matrix.
\item[(b)] For each $\theta \in \Theta, \nu \in \Gamma$, $N_{\theta \nu}:\Omega \times \Rd$ is a measurable function and  for some $\alpha \in (0,2)$ there exists a kernel $k$ that is measurable in $\Rd\setminus \{0\}$ such that for any $\theta \in \Theta, \nu \in \Gamma, x\in \Omega,$ we have 
$$
0 \leq N_{\theta \nu}(x,y) \leq k(y)
$$
and 
$$\int_{\Rd} ( 1 \wedge |y|^{\alpha}) k(y) \D{y} < +\infty,
$$
where we denote $p\wedge q:=\min \{p,q\}$ for $p, q \in \RR.$
\end{itemize}
\end{assumption}

Let us briefly comment on \cref{Assmp-1}. The uniform continuity of $a_{\theta \nu}(\cdot)$ is required for the stability of viscosity sub- or supersolutions under appropriate limits and useful in \cref{A1} which is a key step for proving interior $C^{1,\gamma}$ regularity (cf. \cref{TH2.1}). The \cref{Assmp-1}(b) includes a large class of kernels. We mention some examples below.
\begin{example} Consider the following kernels $N_{\theta \nu} (x,y):$
\begin{itemize}
    \item[(i)] $N_{\theta \nu}(x,y)=\frac{1}{|y|^{d+\sigma}}$ for $\sigma \in (0,2).$ Clearly we can take $k(y)=\frac{1}{|y|^{d+\sigma}}$ and $\int_{\Rd}(1\wedge |y|^{\alpha})k(y)\D{y}$ is finite for $\alpha \in (\sigma, 2).$
    \item[(ii)] $N_{\theta \nu}(x,y)=\sum_{i=1}^{\infty}\frac{a_i}{|y|^{d+\sigma_i}}$ for $\sigma_i\in (0,2)$, $\sigma_0=\sup_{i}\sigma_i<2$ and $\sum_{i=1}^{\infty} a_i=1.$ Similarly taking $N_{\theta\nu}(x,y)=k(y)$ we can see $\int_{\Rd}(1\wedge |y|^{\alpha})k(y) < +\infty$ for $\alpha \in [1+\sigma_0/2, 2)$.
    \item[(iii)] $N_{\theta \nu}(x,y)=\begin{cases}
        \frac{(1-\log|y|)^{\beta}}{|y|^{d+\sigma}} \,\,\,\, \text{for}\,\,\,\, 0<|y|\leq 1\\
        \frac{(1+\log|y|)^{-\beta}}{|y|^{d+\sigma}}\,\,\,\, \text{for}\,\,\,\, |y|\geq 1,
    \end{cases}$\\
    where $\sigma \in (0,2).$\\
    (a) For $2(2-\sigma)> \beta \geq 0,$ taking $N_{\theta \nu}(x,y)=k(y)$ we have $\int_{\Rd}(1\wedge|y|^{\alpha})k(y)\D{y}<+\infty$ for $\alpha \in [1+\frac{\sigma}{2}+\frac{\beta}{4},2).$\\
    (b) For $-\sigma <\beta < 0,$ taking $N_{\theta \nu}(x,y)=k(y)$ we have $\int_{\Rd}(1\wedge|y|^{\alpha})k(y)\D{y}<+\infty$ for $\alpha \in [1+\frac{\sigma}{2},2).$\\
    \underline{Proof of (a):} 
    \begin{align*}
       \int_{\Rd} (1\wedge |y|^{\alpha})k(y)\D{y}=\int_{|y| \leq 1} \frac{|y|^{\alpha}(1-\log|y|)^{\beta}}{|y|^{d+\sigma}}\D{y}+\int_{|y|>1}\frac{(1+\log|y|)^{-\beta}}{|y|^{d+\sigma}}\D{y}:=I_1+I_2.
    \end{align*}
  Using $(1-\log|y|) \leq \frac{1}{\sqrt{|y|}}+1$ and the convexity of $\xi(t)=t^p$ for $p\geq 1$ we get
  \[(1-\log|y|)^{\beta} \leq  C \left(\frac{1}{|y|^{\beta/2}}+1\right).\] Therefore
  \begin{align*}
      I_1 \leq \int_{|y| \leq 1} \frac{ C \D{y}}{|y|^{\beta/2+d+\sigma-\alpha}} + \int_{|y|\leq 1} \frac{C \D{y}}{|y|^{d+\sigma-\alpha}}   <+\infty \,\,\,\, \text{for}\,\,\,\, \alpha \in [1+\sigma/2 +\beta/4, 2),
  \end{align*}
  and 
  \begin{align*}
      I_2 \leq \int_{|y|>1}\frac{\D{y}}{|y|^{d+\sigma}} <+\infty.
  \end{align*}
  \underline{Proof of (b):} Since $\beta <0$ in this case, we have $(1-\log|y|)^{\beta}\leq 1$ and $I_1 <+\infty$ for $\alpha \in [1+\frac{\sigma}{2}, 2).$ To estimate $I_2,$ observe $(1+\log|y|)^{-\beta}\leq (1+|y|)^{-\beta}$ and 
  \begin{align*}
      I_2 \leq C\int_{|y|>1}\frac{(1+|y|^{-\beta})}{|y|^{d+\sigma}}\D{y} < +\infty
\,\,\,\, \text{since}\,\,\,\, \sigma >-\beta.  
\end{align*}
\item[(iv)] $N_{\theta \nu}(x,y)= \frac{\Psi(1/|y|^2)}{|y|^{d+\sigma(x,y)}},$ where $\sigma: \Rd \times \Rd \to \RR$ satisfying 
\[0< \sigma^{-}:=\inf_{(x,y)\in \Rd\times \Rd}\sigma(x,y) \leq \sup_{(x,y)\in \Rd\times \Rd}\sigma(x,y):=\sigma^{+}<2.\] and $\Psi$ is a Bernstein function (for several examples of such functions, see \cite{SSV}) vanishing at zero. Furthermore, $\Psi$ is non-decreasing, concave and satisfies a \textit{weak upper scaling property} i.e, there exists $\mu \geq 0$ and $c \in (0,1]$ such that 
\[ \Psi(\lambda x) \leq c\lambda^{\mu}\Psi(x)\,\,\,\, \text{for}\,\,\,\, x\geq s_0>0, \lambda\geq 1.\]
For $ \mu < 2(2-\sigma^+)$, we can take
\begin{align*}
    k(y)=\begin{cases}
        \frac{\Psi(1)}{|y|^{d+2\mu+\sigma^{+}}}, \,\,\,\, &\text{if}\,\,\,\, 0<|y|\leq 1,\\
        \frac{\Psi(1)}{|y|^{d+\sigma^{-}}}, \,\,\,\, &\text{if}\,\,\,\, |y|> 1
\end{cases}
\end{align*}
and $\int_{\Rd}(1\wedge |y|^{\alpha})k(y)\D{y} < +\infty$ for $\alpha \in [1+\mu+\sigma^{+}/2, 2).$ 
\end{itemize}
 \end{example}
The main purpose of this article is to establish a global Lipschitz regularity and boundary regularity of the solutions satisfying \eqref{Eq-1} under the \cref{Assmp-1}. On the topic of regularity theory for linear elliptic equations, \textcolor{black}{an} H\"{o}lder estimate plays a key role and it can be obtained by using Harnack inequality. The pioneering contributions are by DeGiorgi-Nash-Moser \cites{EDG, JM, JN} who proved $C^{\alpha}$ regularity for solutions to the second order elliptic equations in divergence form with measurable coefficients under the assumption of uniform ellipticity. For equations of non-divergence form, the corresponding regularity theory was established by Krylov and Safonov \cite{KS79}. We refer to \cite{CCbook} for a comprehensive overview of the regularity theory for fully nonlinear elliptic equations.  In \cite{K83}, Krylov studied the boundary regularity for local second order elliptic equations in non-divergence form with bounded measurable coefficients. He obtained the H\"{o}lder regularity of $\frac{u}{\delta}$ up to the boundary where $\delta$ denotes the distance function, i.e., $\delta(x)=\dist(x, \Omega^c)$. 

Turning our attention towards the \textcolor{black}{case of} nonlocal equations, first H\"{o}lder estimates and Harnack inequalities for $s$-harmonic functions are proved by Bass and Kassmann \cites{BK05, BK2005}, and by Bass and Levin \cite{BL02}, using a purely probabilistic approach. \textcolor{black}{However, in the nonlocal setting, the classical form of the Harnack inequality requires the $s$-harmonic function to be nonnegative in the whole of $\RR^d.$ If this global nonnegativity is not assumed, a counterexample was constructed by Kassmann \cite[Theorem 1.2]{K2007}. This phenomenon can also be seen from the important work of Dipierro, Savin and Valdinoci \cite{DSV17} (also see \cite{Kr19}), where they show that all functions are locally $s$-harmonic up to a small error. As a consequence, it is possible to construct $s$-harmonic functions that arbitrarily oscillate and can reach extremas at any chosen point in the ball, more specifically, see \cite[Difference 2.5]{AV19}.}  

In the realm of analytic setup, Silvestre \cite{S06} proved H\"{o}lder continuity of $u$ satisfying \eqref{nonlocal part} with some structural assumptions on the operator and kernel related to the assumptions of Bass and Kassmann. Analogous to the local case \cite{K83}, in the nonlocal setting, for a bounded domain $\Omega\subset \Rd$ with $C^{1,1}$ boundary the first result concerning boundary regularity of $u$ solving the Dirichlet problem for $(-\Delta)^s$ with bounded right hand side is obtained by Ros-Oton and Serra \cite{RS14} where they established a H\"{o}lder regularity of $u/\delta^s$ up to the boundary. This result is proved by using a method of Krylov (see \cite{Kaz}). The idea is to obtain a bound for $u$ with respect to a constant multiple of $\delta^s$ and this controls the oscillation of $u/\delta^s$ near the boundary $\partial \Omega.$ The H\"{o}lder regularity of $u/\delta^s,$ (i) for more general nonlocal linear operators with $C^{1, \alpha}$ domain is established in \cite{RS17}, (ii) for smooth domain with smooth right hand side is established in \cites{G14, G15}, (iii) for kernel with variable order see \cite{KKLL} and (iv) for Dirichlet problem for fractional $p$-Laplacian, see \cite{IMS}. 

In a seminal paper, Caffarelli and Silvestre \cite{CS09} studied the regularity theory for fully nonlinear integro-differential equations of the form : $\sup_{\theta \in \Theta}\inf_{\nu \in \Gamma} \cI[x,u]$ where $\cI [x,u]$ is given by \eqref{nonlocal part}. By obtaining a nonlocal ABP estimate, they established the  H\"{o}lder regularity and Harnack inequality when $N_{\theta \nu}(y)$ ($N_{\theta\nu}(y)$ denotes the $x$-independent form of $N_{\theta\nu}(x,y)$) is positive, symmetric and comparable  with the kernel of the fractional Laplacian. From a large amount of literature that extend the work of Caffarelli and Silvestre \cite{CS09}, we mention \cite{KKL16} where the authors considered integro-PDEs with regularly varying kernel, \cites{BM20, CLD12, KL12} where regularity results are obtained for symmetric and non-symmetric stable-like operators and \cite{KL20} for kernels with variable order. Also, a recent paper \cite{K22} studies H\"{o}lder regularity and a scale invariant Harnack inequality under some weak scaling condition on the kernel. Boundary regularity results for fully nonlinear integro-differential equations are obtained by Ros-Oton and Serra in \cite{RS16}. They considered a restricted class of kernels $\sL_*$ where $N_{\theta \nu}(x,y)$ is $x$-independent and of the following form
\[N_{\theta \nu}(y):=\frac{\mu(y/|y|)}{|y|^{d+2s}}\,\,\,\,\, \text{with}\,\,\,\, \mu\in L^{\infty}(S^{d-1}),\] satisfying $\mu(\theta)=\mu(-\theta)$ and $\lambda\leq \mu \leq \Lambda$ where $0<\lambda\leq \Lambda$ are the ellipticity constants. An interesting feature of $\sL_*$ is 
\[L(x_d)_{+}^s=0\,\,\,\, \text{in}\,\,\,\, \{x_d>0\} \,\,\,\, \text{for all}\,\,\,\, L\in \sL_*\] which is useful to construct barriers in their case. Note that our operators do not enjoy such property for having different orders. Furthermore, with \cref{Assmp-1} the nonlocal part \eqref{nonlocal part} is not scale invariant in our case, that is one may not find any $0\leq \beta \leq 2$ such that $\cI_{\theta\nu}[x, u(r\cdot)]=r^{\beta}\cI_{\theta \nu}[rx, u(\cdot)]$ for any $0<r<1.$

Recently, the mathematical study of mixed local-nonlocal integro-differential equations have been received a considerable attention, for instance, see \cites{BM21, BDVV, BDVV22, BVDV, GM02, Barles12}. The regularity results and Harnack inequality for mixed fractional $p$-Laplace equations are recently obtained in \cites{GK22, GL22}. The interior $C^{\alpha}$ regularity theory for HJBI-type integro-PDEs has been studied by Mou \cite{M19}. He obtained H\"{o}lder regularity for viscosity solutions under uniform ellipticity condition and a slightly weaker condition on kernels in compared to the \cref{Assmp-1} (b), that is, $\int_{\Rd}(1\wedge|y|^2)k(y)\D{y} < +\infty.$ More recently global Lipschitz regularity (compare it with Biagi et.\ al. \cite{BDVV22}) and fine boundary regularity have been obtained for linear mixed local-nonlocal operators in \cite{BMS22}. Since the nonlocal operator applied on the distance function becomes singular near the boundary for certain range of order of the kernel,  one of the main challenges was to construct appropriate sub and supersolutions and prove an oscillation lemma following \cite{RS14}. To do such analysis, along with several careful estimates, the authors borrowed a Harnack inequality from \cite{F09}. Note that for fully nonlinear mixed operators of the form \eqref{eq1.1} no such Harnack inequality is available in the literature.

 In this current contribution, we continue the study started in \cite{BMS22} to obtain the boundary regularity for fully nonlinear integro-differential problems of the form \eqref{Eq-1}. Below, we present our first result that is the Lipschitz regularity of $u$ satisfying \eqref{Eq-1} up to the boundary. Note that \eqref{LPZ} can be achieved under some weaker assumptions on the domain and kernel. For this result, we only assume $\partial \Omega$ to be $C^{1,1}$ and $\int_{\Rd}(1 \wedge |y|^2) k(y)\D{y}<+\infty.$
\begin{theorem}[Lipschitz regularity]\label{T1.1}
Let $\Omega $ be a bounded $C^{1,1}$ domain in $\Rd$ and $u$ be a continuous function which solves the operator inequalities \eqref{Eq-1}
in viscosity sense. Then $u$ is in $C^{0,1}(\Rd)$ and  there exists a constant 
$C$, depending only on $d, \Omega, \lambda, \Lambda, C_0,$
$\int_{\Rd}(1 \wedge |y|^2) k(y) \D{y}$, such that
\begin{equation}\label{LPZ}
\Vert u\Vert_{C^{0,1}(\Rd)} \leq C K.
\end{equation}
\end{theorem}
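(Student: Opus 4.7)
Since $u \equiv 0$ on $\Omega^c$, to establish $\|u\|_{C^{0,1}(\Rd)} \le CK$ it suffices to prove (i) the uniform bound $\|u\|_{L^\infty(\Omega)}\le CK$; (ii) the linear boundary decay $|u(x)|\le CK\,\delta(x)$ for $x\in\Omega$, with $\delta(x):=\dist(x,\Omega^c)$; and (iii) the interior gradient bound $|Du(x)|\le CK$ for every $x\in\Omega$. These three combine in the standard way: for $x\in\Omega$, $y\in\Omega^c$ one uses (ii) with $\delta(x)\le|x-y|$; for $x,y\in\Omega$ one splits into the regimes $|x-y|\le\tfrac12\delta(x)$ (integrate $Du$ along the segment via (iii), which stays inside $\Omega$) and $|x-y|>\tfrac12\delta(x)$ (then $\delta(x),\delta(y)\lesssim|x-y|$, so (ii) at the two endpoints closes the estimate). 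The plan reduces to proving (i)--(iii) by comparison with explicit smooth barriers and a doubling-of-variables argument.

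\textbf{Step 1 ($L^\infty$ bound).} Place $\bar\Omega\subset B_R(x_0)$ and set $\Phi(x):=c_0 K\,\eta(x)$, with $\eta\in C^2(\Rd)\cap L^\infty(\Rd)$ nonnegative, strictly concave on a neighbourhood of $\bar\Omega$, and smoothly cut off to a constant outside. Strict concavity and uniform ellipticity give $\trace(a_{\theta\nu}D^2\Phi)\le -2d\lambda c_0 K$ on $\bar\Omega$; the nonlocal term $\cI_{\theta\nu}[\Phi]$ is bounded by splitting into $|y|<1$ (second-order Taylor bound for $\eta$) and $|y|\ge 1$ ($L^\infty$ bound for $\eta$), both pieces finite thanks to $\int_{\Rd}(1\wedge|y|^2)k(y)\,dy<\infty$; and $C_0|D\Phi|\le c_0K C_0\|D\eta\|_\infty$. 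Taking $c_0$ large (and if needed rescaling the argument of $\eta$ so that the second-order dominance is enlarged) produces $\cL\Phi-C_0|D\Phi|\le-K$ in $\Omega$ together with $\Phi\ge 0 = u$ on $\Omega^c$. The standard comparison principle for the sup-inf operator with Lipschitz drift then yields $u\le\Phi$ on $\Rd$, and the mirror argument with $-\Phi$ gives $\|u\|_{L^\infty}\le CK$.

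\textbf{Step 2 (boundary decay).} Fix $x_0\in\partial\Omega$. The $C^{1,1}$ hypothesis supplies a uniform interior ball $B_\rho(y_0)\subset\Omega$ tangent to $\partial\Omega$ at $x_0$, and guarantees $\delta\in C^{1,1}$ in a fixed tubular neighbourhood $U$ of $\partial\Omega$. Take a barrier $\bar\psi(x):=MK\,g(\delta(x))$, with $g\in C^2([0,\infty))$ satisfying $g(0)=0$, $g'(0)>0$, $g$ strictly concave on $[0,\rho]$, and frozen at $g(\rho)$ past $\rho$ so that $\bar\psi$ is globally bounded. In $U\cap\Omega$,
\[
\trace(a_{\theta\nu}D^2\bar\psi)=MK\bigl[g''(\delta)\,(a_{\theta\nu}D\delta)\cdot D\delta+g'(\delta)\,\trace(a_{\theta\nu}D^2\delta)\bigr],
\]
whose first summand is strictly negative by concavity of $g$ and ellipticity, and whose second is uniformly bounded. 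A Taylor/$L^\infty$ split of the nonlocal integral as in Step~1 adds only a bounded error, and $C_0|D\bar\psi|\le MK C_0 g'(0)$. With $g''$ chosen sufficiently negative and $M$ large, $\bar\psi$ is a supersolution of the second line of \eqref{Eq-1} in $U\cap\Omega$, while on $\partial(U\cap\Omega)$ it dominates $u$ thanks to Step~1 on the interior portion and $\bar\psi\ge 0=u$ on $\Omega^c$. Comparison yields $u(x)\le MK\,g(\delta(x))\le CK\,\delta(x)$ near $x_0$; a symmetric argument with $-\bar\psi$, followed by a compactness argument along $\partial\Omega$, gives (ii) globally.

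\textbf{Step 3 and main obstacle.} For the interior gradient bound (iii), I would run an Ishii--Lions doubling-of-variables argument: consider $\sup_{x,y\in\bar\Omega}\bigl[u(x)-u(y)-L\,\omega(|x-y|)-\alpha(|x-x_0|^2+|y-x_0|^2)\bigr]$ with a concave modulus $\omega$, small $\alpha>0$, and $L\sim CK$, and show that if this supremum were positive one would contradict the viscosity inequalities in \eqref{Eq-1}, the nonlocal contributions being controlled via $\int(1\wedge|y|^2)k<\infty$ and the possibility of a boundary maximum being ruled out by (ii). This gives the Lipschitz control with constant $\lesssim K$, and combining (i)--(iii) via the Strategy concludes the proof. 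The sharpest point is Step~2: the barrier must simultaneously be a supersolution of a fully nonlinear sup-inf operator with a nonlocal term of possibly variable order and with Lipschitz drift, and be linear in $\delta$ near $\partial\Omega$. The concavity of $g$ has to dominate both the nonlocal contribution (whose near-boundary behaviour under the weaker moment bound $\int(1\wedge|y|^2)k<\infty$ calls for careful splitting) and the drift $C_0|D\bar\psi|$; the strict positivity of the ellipticity constant $\lambda$, together with the freedom in tuning $g$ and the radius of the tubular neighbourhood, is what makes this possible.
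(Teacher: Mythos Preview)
Your three-step strategy matches the paper's organization: it too first proves the barrier estimate $|u|\le CK\delta$ (its Lemma~3.1) and then upgrades to Lipschitz. Steps~1--2 are close in spirit to the paper, though the paper does not build barriers by hand as you do; it invokes ready-made barriers from Mou (\cite[Lemmas~5.4 and 5.5]{M19}) satisfying $\cP^{+}\chi+\cP^{+}_k\chi+C_0|D\chi|\le -1$, together with the observation (Remark~2.1) that since the barrier is $C^2$ one gets $\cP^{+}(u-\chi)+\cP^{+}_k(u-\chi)+C_0|D(u-\chi)|\ge 0$ and can apply the maximum principle (Lemma~2.2). Your home-built $g(\delta)$ barrier would also need this route, since a direct comparison principle for the sup--inf operator $\cL$ with $x$-dependent kernels is not available here (the paper flags this explicitly).

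The substantive divergence is Step~3. You propose Ishii--Lions doubling; the paper does \emph{not} use that. Instead, for each $x_0\in\Omega$ with $2s=\delta(x_0)\wedge 1$, it rescales $v(x)=u(sx)$, shows $v$ solves the $s$-scaled equations \eqref{eqq2.1} with right-hand side $Ks^2$, localizes by a cutoff, and applies an interior $C^{1,\gamma}$ estimate for the scaled operator $\cL^s$ that is \emph{uniform in $s$} (Lemma~2.1, proved in the Appendix by approximation \`a la Caffarelli--Silvestre). The barrier bound \eqref{ET2.1B} feeds into the tail estimate needed after cutoff, and one obtains \eqref{ET2.1G}, after which the Lipschitz bound is patched together by the same dichotomy you describe. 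This ``scaling plus uniform interior estimate'' approach is essential elsewhere in the paper (Theorems~1.2 and 1.3 reuse Lemma~2.1), so it earns its keep.

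Your Ishii--Lions alternative is a legitimate route in principle (cf.\ Barles--Chasseigne--Imbert \cite{Barles12}), and it has the appeal of being self-contained for Theorem~1.1 alone. But in the present generality---sup--inf structure plus kernels $N_{\theta\nu}(x,y)$ that depend on $x$ with only the upper bound $N_{\theta\nu}(x,y)\le k(y)$ and no continuity in $x$---you would need to spell out (a) the appropriate Jensen--Ishii lemma for mixed local--nonlocal operators, and (b) how the nonlocal contributions at the doubled points $(\bar x,\bar y)$ are combined without any cancellation between $N_{\theta\nu}(\bar x,\cdot)$ and $N_{\theta\nu}(\bar y,\cdot)$. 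These are not insurmountable, but they are exactly the content your one-sentence Step~3 omits, and neither is routine at this level of generality. The paper's scaling route sidesteps both issues.
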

To prove \cref{T1.1}, the first step is to show that the distance function $\delta(x)=\dist(x, \Omega^c)$ can be used as a barrier to $u$ in $\Omega.$ Once this is done, we can complete the proof by considering different cases depending on the distance between any two points in $\Omega$ or their distance from $\partial \Omega$ and combining $|u|\leq C\delta$ with an interior $C^{1, \gamma}$-estimate for scaled operators (cf. \cref{TH2.1}).

Next we show the fine boundary regularity, that is the H\"{o}lder regularity of $u/\delta$ up to the boundary. 

\begin{theorem} [Boundary regularity]\label{T1.2}
Suppose that \cref{Assmp-1} holds. Let $\Omega$ be a bounded $C^2$ domain and $u$ be a viscosity
solution to the operator inequalities \eqref{Eq-1}.
 Then there exists 
$\upkappa\in (0,  \hat{\alpha})$ such that 
\begin{equation}\label{BMS01}
\norm{u/\delta}_{C^{\upkappa}(\overline{\Omega})}\leq C_1 K,
\end{equation}
for some constant $C_1$, where
$\upkappa, C_1$ depend on $d, \Omega,  C_0, \Lambda,\lambda,  \alpha$ and $ \int_{\Rd} (1 \wedge |y|^\alpha 
 ) k(y) \D{y}$. Here $\hat{\alpha}$ is given by
\begin{align*}
    \hat\alpha=\begin{cases}
    1\,\,\,\, &if \,\,\,\, \alpha \in (0,1]\\
    \frac{2-\alpha}{2}\,\,\,\, &if \,\,\,\, \alpha \in (1,2).
        \end{cases}
\end{align*}
\end{theorem}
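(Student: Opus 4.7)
The plan is to follow the Krylov-type approach, as adapted to fractional problems by Ros-Oton--Serra~\cite{RS14} and to linear mixed operators in \cite{BMS22}, transplanted now to the fully nonlinear setting of \eqref{Eq-1}. Concretely, for each $x_0\in\partial\Omega$ I will produce a number $Q(x_0)\in\RR$ with
\[|u(x) - Q(x_0)\,\delta(x)| \,\leq\, CK\,|x - x_0|^{1+\upkappa}\qquad\text{for all } x\in \Omega\cap B_{r_0}(x_0),\]
so that dividing by $\delta$ controls the oscillation of $u/\delta$ at the boundary uniformly in $x_0$. By \cref{T1.1} I may normalize $K=1$ and assume $|u|\leq C\delta$ in $\Omega$; flattening $\partial\Omega$ near $x_0$ with a $C^2$ diffeomorphism reduces everything to a half-ball $B_1\cap\{x_d>0\}$ with an operator in the same class up to a bounded drift that can be absorbed into the $C_0|Du|$ terms.

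The main analytic ingredient is a pair of barriers of the form $\phi^{\pm}(x)=\delta(x)\pm M\delta(x)^{1+\upkappa}$ (suitably truncated/extended). The second-order part $\trace\, a_{\theta\nu}D^2\phi^{\pm}$ is controlled because $\delta\in C^2$ near $\partial\Omega$ with $|D^2\delta|\in L^\infty$. The delicate computation is $\cI_{\theta\nu}[x,\phi^{\pm}]$ for $x$ close to $\partial\Omega$: since $\phi^{\pm}$ vanishes outside $\Omega$, the integrand has a jump across the boundary. Splitting the integral into $\{|y|\leq \delta(x)/2\}$, $\{\delta(x)/2<|y|\leq 1\}$ and $\{|y|>1\}$ and using \cref{Assmp-1}(b), the middle region produces a contribution of size at worst $\delta(x)^{1-\alpha}$ when $\alpha>1$, while it stays uniformly bounded for $\alpha\leq 1$. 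This is exactly the origin of $\hat\alpha$: imposing $\upkappa<\hat\alpha$ guarantees that the $M\delta^{\upkappa}$ correction dominates the singular nonlocal contribution, so that $\phi^{+}$ is a supersolution and $\phi^{-}$ is a subsolution of the in-equations \eqref{Eq-1} up to the boundary.

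With barriers in hand, the heart of the argument is an oscillation lemma: setting $M_r=\sup_{\Omega\cap B_r(x_0)} u/\delta$ and $m_r=\inf_{\Omega\cap B_r(x_0)} u/\delta$, I aim to establish
\[M_r - m_r\,\leq\, C_* r^{\upkappa}\qquad\text{for all } r\in(0,r_0].\]
Iterating dyadically with $r_k=\eta^k r_0$, at scale $r_k$ one of the two nonnegative functions $u - m_{2r_k}\delta$ or $M_{2r_k}\delta - u$ must be bounded below by a positive multiple of $\delta$ on a set of nontrivial measure in $\Omega\cap B_{r_k}(x_0)$. Applying the \emph{weak Harnack inequality} (the ``weak version'' alluded to in the abstract) to this nonnegative function, together with the sub-/supersolution from the previous step, yields a strict decrease of the oscillation between two consecutive scales; iterating then produces the exponent $\upkappa$. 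Because $\cI_{\theta\nu}$ is not scale invariant, I must check that the rescaled kernels $N_{\theta\nu}^{(k)}(x,y)=r_k^{\,d} N_{\theta\nu}(x_0+r_k x,\, r_k y)$ continue to satisfy \cref{Assmp-1} with constants independent of $k$; this is precisely where the integrability hypothesis $\int_{\Rd}(1\wedge|y|^\alpha)k(y)\D{y}<\infty$ is invoked to control tails uniformly across scales.

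The main obstacle I anticipate is the oscillation lemma itself. Unlike the purely fractional setting of \cite{RS14, RS16}, where the kernel is scale invariant and symmetric, here I must simultaneously control the local second-order part, a possibly non-symmetric and non-scale-invariant nonlocal part, the drift $C_0|Du|$, and the tail of $u$ outside $B_{r_k}(x_0)$, while showing that the weak Harnack constant and the barrier constants are all independent of the dyadic index $k$. Once the dyadic decay is in place, a standard patching with the interior Hölder regularity of $u/\delta$ (inherited from the interior $C^{1,\gamma}$ estimate \cref{TH2.1} applied to $u$ together with the smoothness of $\delta$ on $\{\delta\geq \rho\}$) upgrades the boundary estimate to the global bound \eqref{BMS01}.
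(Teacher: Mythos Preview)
Your high-level strategy matches the paper: an oscillation lemma for $u/\delta$ via barriers of the form $\delta\pm c\,\delta^{1+\epsilon}$, a weak Harnack inequality applied to $u-m_k\delta$ and $M_k\delta-u$, dyadic iteration, and patching with interior regularity. Two points, however, diverge from the paper and need care.

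First, you propose to flatten $\partial\Omega$ by a $C^2$ diffeomorphism. The paper never does this, and for good reason: under a change of variables $x\mapsto\Phi(x)$, the nonlocal kernel becomes $N_{\theta\nu}(\Phi(x),\Phi(x+y)-\Phi(x))$ times a Jacobian, and there is no obvious reason why the transformed kernels still satisfy \cref{Assmp-1}(b) with a common majorant $k$. The paper works intrinsically with the distance function $\delta$ and the exterior-ball geometry of $\Omega$ (see \cref{Lem3.5,Lem3.7,L2.5}); you should do the same.

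Second, your description of the iteration says ``one of the two nonnegative functions $u-m_{2r_k}\delta$ or $M_{2r_k}\delta-u$\ldots''. These functions are \emph{not} nonnegative in $\Rd$, only in $\sD_{R_k}$, and the weak Harnack inequality sees tails through the nonlocal term. The paper handles this by writing $u_k=\sK^{-1}u-m_k\delta$, splitting $u_k=u_k^+-u_k^-$, and using the Lipschitz bound from \cref{T1.1} to show that the nonlocal contribution of $u_k^-$ in $\sD_{R_k/2}$ is at worst $CR_k^{1-\alpha}$ (see \eqref{E4.23}); this produces a right-hand side of size $\delta^{1-\alpha}$ for $\alpha>1$, which is then absorbed by the barrier of \cref{Lem3.5}. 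Your ``control the tail of $u$'' is the right instinct, but the mechanism is specifically that the \emph{negative part} of the iterate, not merely the tail of $u$, must be estimated and fed back into the equation for $u_k^+$. Without this step the iteration does not close.
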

To prove \cref{T1.2}, following \cite{RS14}, we first establish an oscillation lemma (cf. \cref{P2.1}). For this purpose, it is necessary to construct appropriate sub- and supersolutions carefully since $\cI_{\theta \nu} \delta$ becomes singular near the boundary $\partial \Omega$ when $\alpha\in (1,2).$ Then we shall use a ``Harnack type'' inequality (cf. \cref{T5.1}). This \textit{Harnack type} inequality is new and had to be developed specifically for our setting due to the unavailability of classical Harnack inequality in this context. Moreover, it is important to note that one must bypass the use of the comparison principle \cite[Theorem 5.1]{BM21} in our analysis, since the mentioned theorem applies only for translation invariant linear operators. For non-translation invariant operators, such comparison principle is unavailable, see \cref{rem2.1} for further details.

Now applying \eqref{BMS01}, we prove the H\"{o}lder regularity of $Du$ up to the boundary. 
\begin{theorem} [Gradient H\"{o}lder regularity]\label{T1.3}
Suppose that \cref{Assmp-1} holds and $\Omega$ be a bounded $C^2$ domain. Then for any viscosity solution $u$ to the operator inequalities \eqref{Eq-1} we have
\begin{align*}
    ||Du||_{C^{\eta}(\overline{\Omega})} \leq CK,
\end{align*}
for some $\eta \in (0,1)$ and $C,$ depending only on $d, \Omega,  C_0, \Lambda,\lambda,  \alpha$ and $ \int_{\Rd} (1 \wedge |y|^\alpha 
 ) k(y) \D{y}$.
\end{theorem}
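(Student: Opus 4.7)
The plan is to combine the boundary regularity $u/\delta \in C^{\upkappa}(\overline{\Omega})$ supplied by \cref{T1.2} with the interior $C^{1,\gamma}$ estimate (\cref{TH2.1}) via a rescaling and comparison argument. The target is to show that for every $x_0 \in \partial\Omega$ the limit $\ell(x_0) := \lim_{y \to x_0,\, y \in \Omega} u(y)/\delta(y)$ exists, that $Du(x_0) = \ell(x_0)\,\nu(x_0)$ with $\nu(x_0)$ the inward unit normal, and that both the boundary gradient $x_0 \mapsto Du(x_0)$ and the interior field $Du$ are $\eta$-H\"older for a common small $\eta > 0$.

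First I would exploit the $C^2$ regularity of $\Omega$: in a tubular neighbourhood of $\partial\Omega$ the signed distance satisfies $\delta(y) = (y - x_0)\cdot\nu(x_0) + O(\abs{y-x_0}^2)$. Combining this expansion with \eqref{BMS01} yields, for every boundary point $x_0$ and every $y \in \Omega$ close to $x_0$,
\[
\abs{u(y) - \ell(x_0)\,(y-x_0)\cdot\nu(x_0)} \leq C K \abs{y - x_0}^{1 + \upkappa'},
\]
for some $\upkappa' \in (0, \upkappa]$ dictated by the competition between the $C^{\upkappa}$-oscillation of $u/\delta$ and the quadratic remainder of $\delta$. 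In particular $u$ is differentiable at $x_0$ with $Du(x_0) = \ell(x_0)\nu(x_0)$, and since $x_0 \mapsto \ell(x_0)$ is $C^{\upkappa}(\partial\Omega)$ by \cref{T1.2} while $\nu \in C^1(\partial\Omega)$, the map $x_0 \mapsto Du(x_0)$ is H\"older on $\partial\Omega$.

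Next I would transfer this to an interior gradient oscillation estimate via rescaling. For $x \in \Omega$ with $\rho := \delta(x)$ small, let $x_0 := \pi(x)$ be the closest boundary point and set $T_{x_0}(y) := \ell(x_0)(y - x_0)\cdot\nu(x_0)$. Consider $w(y) := u(y) - T_{x_0}(y)\,\chi(y)$, where $\chi$ is a smooth cut-off equal to $1$ on $B_{2\rho}(x)$ and supported in a fixed small neighbourhood of $x_0$. By the displayed estimate above, $\norm{w}_{L^\infty(B_{\rho/2}(x))} \leq C K \rho^{1 + \upkappa'}$, while $w$ satisfies inequations of the form \eqref{Eq-1} with right-hand side bounded by $CK(1 + \abs{\ell(x_0)}) \leq C' K$: the local part of $\cL$ annihilates the affine function $T_{x_0}$, and the nonlocal contribution of $T_{x_0}\chi$ is uniformly bounded on $B_{2\rho}(x)$ thanks to the cut-off together with the integrability in \cref{Assmp-1}(b). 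Rescaling via $v(z) := \rho^{-(1 + \upkappa')} w(x + \rho z)$ produces a function on $B_{1/2}$ satisfying analogous inequations with bounded right-hand side, to which \cref{TH2.1} applies, yielding the scale-invariant bound
\[
\rho^{\gamma}\,[Du]_{C^{\gamma}(B_{\rho/4}(x))} \leq C K \rho^{\upkappa'}
\]
for every $\gamma \leq \upkappa'$; taking $\gamma = \upkappa'$ gives $[Du]_{C^{\upkappa'}(B_{\rho/4}(x))} \leq C K$ uniformly in $\rho$.

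To conclude I would run the standard dichotomy: for $x, z \in \Omega$, put $r := \abs{x - z}$. If $r \leq \tfrac{1}{4}\min\{\delta(x), \delta(z)\}$ then both points lie in a single ball of the previous paragraph and $\abs{Du(x) - Du(z)} \leq CK r^{\upkappa'}$ directly; otherwise both points are within $Cr$ of $\partial\Omega$, and a telescoping chain of dyadic balls shrinking from $x$ to $\pi(x)$ (and similarly for $z$) yields $\abs{Du(x) - Du(\pi(x))} + \abs{Du(z) - Du(\pi(z))} \leq CK r^{\upkappa'}$, while $\abs{Du(\pi(x)) - Du(\pi(z))} \leq CK r^{\upkappa''}$ from the boundary-to-boundary bound of the second paragraph, delivering $\abs{Du(x) - Du(z)} \leq CK r^\eta$ for $\eta := \min\{\upkappa', \upkappa''\}$. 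The main obstacle is the rescaling step: because $\cL$ is neither translation nor scale invariant, verifying that the affine correction $T_{x_0}\chi$ contributes only a uniformly bounded forcing to $\cL w$ requires a careful splitting of the nonlocal integral on $\{\abs{y}\leq 1\}$ and $\{\abs{y}>1\}$ together with the full strength of \cref{Assmp-1}(b), and one must simultaneously check that the rescaled coefficients $a_{\theta\nu}$ and kernels $N_{\theta\nu}$ fall inside the class to which \cref{TH2.1} applies with constants independent of $\rho$.
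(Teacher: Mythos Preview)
Your proposal is correct and takes a genuinely different route from the paper. The paper never subtracts an affine function from $u$; instead it decomposes $Du = v\,D\delta + \delta\,Dv$ with $v=u/\delta$, notes that $v\,D\delta\in C^{\upkappa}$ immediately from \cref{T1.2}, and concentrates on $\vartheta:=\delta\,Dv$. To bound $Dv$ the paper first derives viscosity in-equations for $v$ itself via a product-rule computation (\cref{L4.1}), producing the bilinear cross term $Z_{\theta\nu}[v,\delta](x)=\int (v(y)-v(x))(\delta(y)-\delta(x))\,N_{\theta\nu}(x,y-x)\,\D{y}$; it then estimates this term, rescales $w=v-v(x_0)$, runs the cutoff trick of \cref{T1.1}, and applies \cref{TH2.1} to obtain $|Dv|\lesssim K\delta^{\upkappa-1}$ together with a local H\"older bound on $Dv$ at scale $\delta$ (\cref{L4.2}). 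The closing dichotomy is run on $\vartheta=\delta\,Dv$ rather than on $Du$ directly, and no boundary value of $Du$ or telescoping chain is ever invoked. Your scheme---subtract the boundary linear part $T_{x_0}$, rescale $u-T_{x_0}\chi$, and telescope to $\partial\Omega$---is the classical ``subtract the tangent plane'' strategy, closer to the local Krylov argument and to Ros-Oton--Serra's purely nonlocal treatment; it bypasses the product-rule lemma entirely, but in exchange the tail computation you flag must absorb both $\cI_{\theta\nu}(T_{x_0}\chi)$ and, after rescaling, the $(\varphi-1)$-tail of $\rho^{-(1+\upkappa')}w$ as in the proof of \cref{T1.1} (this goes through because $\upkappa'<\hat\alpha$ forces $2-\alpha-\upkappa'>0$). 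Both arguments ultimately rest on \cref{T1.2} plus a rescaled application of \cref{TH2.1}; the paper's variant has the mild advantage that all boundary information is packaged once in the $C^{\upkappa}$ regularity of $v$, so no separate identification of $Du|_{\partial\Omega}$ is needed.
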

The interior $C^{1, \eta}$-regularity for fully nonlinear integro-differential equations is studied in \cite{CS09} by introducing a new ellipticity class where the kernels are $C^1$ away from the origin. Kriventsov \cite{K13} extended this result without the additional assumption on kernels (sometimes referred as \textit{rough kernels}). Also see \cite{S15} for its parabolic version. For HJBI-type integro-PDEs, interior $C^{1, \eta}$-regularity is established by Mou and Zhang \cite{MZ21} and for mixed local nonlocal fractional $p$-Laplacian, see \cite{DM22}. The $C^{1,\eta}$-regularity up to the boundary for linear mixed local-nonlocal operators is recently obtained in \cite{BMS22}.

The rest of the article is organized as follows. In \cref{section 2}, we introduce the necessary preliminaries and collect all the auxiliary results which will be used throughout the article. In \cref{Section 3} we prove \cref{T1.1}. \cref{T1.2} is proved in \cref{Section 4}. In \cref{section 5} we prove \cref{T1.3}. Lastly, in \cref{appendix}, following an approximation and scaling argument, we give a proof of $C^{1, \gamma}$ regularity for a scaled operator, i.e., \cref{TH2.1}.
\section{Preliminaries}\label{section 2}
This section sets the notation which we use throughout the paper and collects the necessary results.
\subsection{Notations and Definitions}
We start by setting the notation. We use $\sB_r(x)$ to denote an open ball of radius $r>0$ centered at a point $x \in \Rd$ and for $x=0,$ we denote $\sB_r:=\sB_r(0).$ For any subset $U\subseteq \Rd$ and for $\alpha \in (0,1),$ we denote $C^{\alpha}(U)$ as the space of all bounded, $\alpha$-H\"{o}lder continuous functions equipped with the norm
\[||f||_{C^{\alpha}(U)}:=\sup_{x\in U}|f(x)|+\sup_{x,y\in U}\frac{|f(x)-f(y)|}{|x-y|^{\alpha}}.\]
Recall that for $\alpha=1,$ $C^{0,1}(U)$ denotes the space of all Lipschitz continuous functions on $U.$ The space of all bounded functions with bounded $\alpha$-H\"{o}lder continuous derivatives is denoted by $C^{1,\alpha}(U)$ with the norm
\[||f||_{C^{1,\alpha}(U)}:=\sup_{x\in U}|f(x)|+ ||Df||_{C^{\alpha}(U)}.\]

We use $USC(\Rd), LSC(\Rd), C(\Rd), C_b(\Rd), M^d$ to denote the space of upper semicontinuous, lower semicontinuous, continuous functions, bounded continuous functions on $\Rd$ and $d\times d$ symmetric matrices respectively.

Now, let us introduce the scaled operators.
For $0<s \leq 1,$ we define scaled version of \eqref{eq1.1} as following.
\begin{align*}
    \cL^s[x,u]=\sup_{\theta \in \Theta} \inf_{\nu \in \Gamma}\left\{\trace a_{\theta \nu}(sx)D^2u(x) + \cI^s_{\theta \nu}[x,u]\right\},
\end{align*}
where 
\begin{align*}
    \cI^s_{\theta \nu}[x,u]=\int_{\mathbb{R}^d}(u(x+y) -u(x) - \Ind_{\sB_{\frac{1}{s}}}(y) \grad u(x) \cdot y   )s^{d+2}N_{\theta \nu}(sx, sy) \D{y}.
\end{align*}
Next we define extremal Pucci operators for second order term and the nonlocal term.  
\begin{align*}
\cP^{+}u(x) &\df \sup \left\lbrace \trace ( A D^2 u(x)) , A \in  M^d ,  \lambda \Id \leq A \leq \Lambda \Id  \right\rbrace, \\
\cP^{-}u(x) &\df \inf \left\lbrace \trace ( A D^2 u(x)) , A \in  M^d ,  \lambda \Id \leq A \leq \Lambda \Id  \right\rbrace, \\
\end{align*}
and 
\begin{align*}
\cP^{+}_{k,s} u(x) &\df \int_{\Rd} (u(x+y) -u(x) - \Ind_{\sB_{\frac{1}{s}}}(y) \grad u(x) \cdot y   )^{+} s^{d+2} k(sy) \D{y} , \\
\cP^{-}_{k,s} u(x) &\df - \int_{\Rd} (u(x+y) -u(x) - \Ind_{\sB_{\frac{1}{s}}}(y) \grad u(x) \cdot y   )^{-} s^{d+2} k(sy) \D{y} .
\end{align*}
Denote $\cP^+_{k,1}=\cP^+_k$ and $\cP^-_{k,1}=\cP^-_k.$

We recall the definition of viscosity sub and supersolution. First of all, we say that a function $\varphi$ touches from above (below) at $x$ if, for a small $r>0,$
\[\varphi(x)=u(x)\,\,\,\, \text{and}\,\,\,\, u(y) \leq (\geq) \varphi(y)\,\,\,\, \text{for all}\,\,\,\, y\in \sB_r(x).\]
\begin{definition}\label{viscosity defn}
    A function $u \in USC(\Rd) \cap L^{\infty}(\Rd)$ (resp. $u\in LSC(\Rd) \cap L^{\infty}(\Rd) $) is said to be a viscosity subsolution (resp. supersolution) to \eqref{Eq-1} if whenever $\varphi$ touches $u$ from above (resp. below) for some bounded test function $\varphi \in C^2(\sB_r(x)),$ then 
\begin{align*}
v=\begin{cases}
\varphi\,\,\,\, &\text{in}\,\,\, \sB_r(x)\\
u\,\,\,\, &\text{in}\,\,\, \sB^c_r(x)
\end{cases}
\end{align*}
satisfies $\cL v(x)+C_0|Dv(x)| \geq -K$ (resp. $\cL v(x)-C_0|Dv(x)| \leq K$).
\end{definition}

\subsection{Auxiliary lemmas } We collect some preliminary results here. The first result is the interior $C^{1,\gamma}$ regularity for the scaled operator $\cL^{s}.$
\begin{lemma}\label{TH2.1}
Let  $0<s\leq 1$  and $u \in L^{\infty}(\Rd)\cap C(\Rd)$ solves the operator inequalities
\begin{equation}\label{eqq2.1}
\begin{aligned}
    \cL^s[x,u]+C_0s|Du(x)| \geq -K\,\,\,\, &in\,\,\, \sB_2,\\
    \cL^s[x,u]-C_0s|Du(x)| \leq K\,\,\,\, &in\,\,\, \sB_2,
 \end{aligned}
\end{equation}
in the viscosity sense. Then there exist constants $0<\gamma <1$ and $C>0$ independent of $s$, such that 
\begin{align*}
    ||u||_{C^{1, \gamma}(\sB_1)} \leq C\Big(||u||_{L^{\infty}(\Rd)}+ K\Big),
\end{align*}
where $\gamma$ and $C$ depend only on $d, \lambda, \Lambda, C_0 $ and $\int_{\Rd} (1 \wedge |y|^2) k(y) \D{y} $. 
\end{lemma}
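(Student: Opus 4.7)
The plan is to adapt the Caffarelli--Silvestre approximation and iteration scheme from \cite{CS09}, worked out for HJBI-type mixed operators by Mou \cite{M19}, to the scaled family $\cL^s$ in a way that yields constants independent of $s\in(0,1]$. The target is the usual dyadic decay: for some $0<\gamma<1$, $r\in(0,1)$ and for every $k\in\NN$ there exists an affine function $\ell_k$ with $\|u-\ell_k\|_{L^\infty(\sB_{r^k})}\le Cr^{k(1+\gamma)}\bigl(\|u\|_{L^\infty(\Rd)}+K\bigr)$. A Campanato-type summation then gives the $C^{1,\gamma}$ estimate at the origin, and translation plus covering delivers the estimate on $\sB_1$.

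The heart of the argument is an \emph{approximation lemma}, uniform in $s$: for every $\varepsilon>0$ there exists $\eta>0$ such that, whenever $\|u\|_{L^\infty(\Rd)}\le 1$ and $K\le\eta$, any viscosity solution of \eqref{eqq2.1} lies within $\varepsilon$ in $L^\infty(\sB_{3/2})$ of a solution $h$ of the frozen-coefficient Isaacs equation
\[
\sup_{\theta\in\Theta}\inf_{\nu\in\Gamma}\bigl\{\trace\bigl(a_{\theta\nu}(0)D^2 h\bigr)+\tilde{\cI}_{\theta\nu}[x,h]\bigr\}=0\quad\text{in }\sB_{3/2},
\]
where $\tilde{\cI}_{\theta\nu}$ is either vanishing (when the underlying scale tends to $0$) or a translation-invariant nonlocal operator whose interior $C^{1,\bar\gamma}$ theory is already available from \cite{CS09, M19}. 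This lemma is proved by compactness and contradiction: assuming it fails, a normalising sequence $(s_n,u_n)$ is extracted; a Krylov--Safonov-type interior H\"older estimate, which is applied uniformly in $s$ by means of the bound $\int_{\Rd}(1\wedge|y|^2)\,s^{d+2}k(sy)\,\D{y}\le\int_{\Rd}(1\wedge|z|^2)k(z)\,\D{z}$ (via the substitution $z=sy$), yields a locally uniform limit $u_\infty$; stability of viscosity solutions under half-relaxed limits, combined with the uniform continuity of $a_{\theta\nu}$, identifies $u_\infty$ as a solution of the limiting frozen equation, whose known $C^{1,\bar\gamma}$ bound contradicts the non-approximation.

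The iteration step is then routine: one rescales $\tilde u(x):=r^{-k(1+\gamma)}\bigl(u(r^kx)-\ell_k(r^kx)\bigr)$ and checks that $\tilde u$ satisfies an in-equation of exactly the form \eqref{eqq2.1} with $s$ replaced by $r^ks$ and a right-hand side still controlled by $K$ up to structural constants; the ellipticity bounds, the $C_0 s$-type drift coefficient and the weighted nonlocal mass are all preserved or improved under this rescaling, so the approximation lemma applies verbatim at each step and produces $\ell_{k+1}$ from $\ell_k$. The main obstacle is securing the uniform-in-$s$ character of the approximation lemma: for $s$ of order $1$ the nonlocal part cannot be dropped in the compactness limit and one must invoke the mixed local--nonlocal interior $C^{1,\bar\gamma}$ theory on the limiting equation; for $s\to 0$ the nonlocal term vanishes in the limit and one falls back on the purely local Caffarelli--Cabr\'e theory. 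Unifying these two regimes within a single estimate, and ensuring the H\"older bound used in the compactness step is itself uniform in $s$, is the most delicate point of the proof.
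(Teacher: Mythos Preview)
Your overall plan—an approximation lemma plus a Campanato-type iteration in the spirit of \cite{CS11,MZ21}—is exactly what the paper does. But your formulation of the approximation lemma introduces a complication that is both unnecessary and, as stated, contains a gap.

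The paper does \emph{not} split into two regimes. Instead it introduces a \emph{double} scale: on top of the given $s$-scaled operator $\cL^s$ it rescales by a further parameter $r$, obtaining operators $\cL^{r,s}(x_0)$ whose nonlocal part carries the kernel $(rs)^{d+2}N_{\theta\nu}(\cdot,rsy)\le (rs)^{d+2}k(rsy)$. The approximation lemma (the paper's Lemma~A.1, following \cite[Lemma~4.1]{MZ21}) then asserts: there exist $r_0,\eta>0$, \emph{independent of $s\in(0,1]$ and of $x_0$}, such that if $r<r_0$ and the right-hand side is $\le\eta$, the solution is $\varepsilon$-close to a solution of the purely local frozen Isaacs operator $\cL^{0,s}(x_0)[x,h]=\sup_\theta\inf_\nu\trace\bigl(a_{\theta\nu}(sx_0)D^2h\bigr)$. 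The point is that the nonlocal term $\cI^{r,s}_{\theta\nu}$ is a genuine lower-order term that vanishes as $r\to 0$ uniformly in $s$, so the limiting equation is \emph{always} local and the needed $C^{1,\beta}$ estimate comes from Caffarelli--Cabr\'e \cite[Corollary~5.7]{CCbook}. In the iteration one takes $r=\delta\mu^k$ with $\delta\le r_0$, and the lemma applies at every step with constants independent of $s$.

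Your version, by contrast, tries to run the compactness with the single parameter $s$ and then claims that for $s$ of order~$1$ the limit is ``a translation-invariant nonlocal operator whose interior $C^{1,\bar\gamma}$ theory is already available from \cite{CS09,M19}''. This is where the gap lies: under \cref{Assmp-1}(b) the kernels satisfy only an \emph{upper} bound $N_{\theta\nu}(x,y)\le k(y)$, with no lower ellipticity and no regularity in $x$; there is no off-the-shelf $C^{1,\bar\gamma}$ theory for such nonlocal parts (the results in \cite{CS09} need two-sided bounds comparable to $|y|^{-d-2s}$, \cite{M19} gives only $C^\alpha$, and \cite{MZ21} requires an additional uniform continuity hypothesis on $N_{\theta\nu}$ that the paper explicitly does \emph{not} assume). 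So your ``$s$ of order $1$'' branch cannot be closed as written. The fix is precisely the paper's device: keep $s$ as a passive parameter and let a separate zoom-in parameter $r\to 0$ kill the nonlocal piece, so that you only ever need the local $C^{1,\beta}$ theory.
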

\begin{proof}
The proof essentially uses the approximation arguments for nonlocal equations \cite{CS11} and it is postponed to \cref{appendix}.
\end{proof}
Now we present a maximum principle type result similar to \cite[Theorem 5.2]{BM21}. We report the proof here for reader's convenience.
\begin{lemma}\label{A3.1}
Let u be a bounded function on $\Rd$ which
is in $USC(\overline{\Omega})$ and satisfies $\cP^{+} u + \cP_k^{+} u + C_0 |Du| \geq 0$ in $\Omega$. 
Then we have $\sup_{\Omega}  u \leq  \sup_{\Omega^c} u.$ 
\end{lemma}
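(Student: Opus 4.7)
My plan is to argue by contradiction, reducing the claim to an Alexandroff--Bakelman--Pucci (ABP) type estimate for the extremal mixed local-nonlocal operator.

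Since $u \in USC(\overline\Omega)$ with $\overline\Omega$ compact, the supremum is attained at some $x_0 \in \overline\Omega$. Because $\Omega$ is open, $\partial\Omega \subset \Omega^c$, so if $x_0 \in \partial\Omega$ the conclusion $\sup_\Omega u \leq u(x_0) \leq \sup_{\Omega^c} u$ is immediate. I therefore assume $x_0 \in \Omega$ with $u(x_0) > M := \sup_{\Omega^c} u$ and aim to derive a contradiction. Replacing $u$ by $u - M$ (which preserves the subsolution inequality, since $\cP^{+}$, $\cP_{k}^{+}$, and $|D\cdot|$ all annihilate constants), I may assume $u \leq 0$ on $\Omega^c$ and $u(x_0) > 0$.

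The natural first attempt --- testing the viscosity inequality at $x_0$ with a constant or a small paraboloid $\varphi(x) = u(x_0) + \delta|x-x_0|^2$, and setting $v = \varphi$ on $\sB_r(x_0)$ with $v = u$ outside --- does not yield a contradiction. A short computation shows that $\cP^{+} v(x_0)$, $\cP_{k}^{+} v(x_0)$, and $C_0|Dv(x_0)|$ are all of order $\delta$; the integrand $(v(x_0+y) - u(x_0))^{+}$ vanishes outside $\sB_r(x_0)$, because $u(x_0+y) \leq u(x_0)$ throughout, both on $\overline\Omega$ (since $x_0$ is the maximum) and on $\Omega^c$ (since $u \leq 0 < u(x_0)$ there). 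Thus the viscosity inequality reduces trivially to something nonnegative $\geq 0$. The principal obstacle is that the positive-part structure of $\cP_{k}^{+}$ captures no strict sign information at an interior maximum.

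To circumvent this, I would appeal to the ABP maximum principle for the extremal mixed operator $\cP^{+}\,\cdot\, + \cP_{k}^{+}\,\cdot\, + C_0|D\cdot|$. Such an estimate is available in this setting following the techniques of Caffarelli--Silvestre \cite{CS09} for purely nonlocal extremal operators, extended to mixed local-nonlocal operators in \cite{BM21}: every $w \in USC(\overline\Omega) \cap L^{\infty}(\Rd)$ satisfying $\cP^{+}w + \cP_{k}^{+}w + C_0|Dw| \geq -f$ in the viscosity sense in $\Omega$, with $w \leq 0$ on $\Omega^c$, obeys
$$
\sup_{\overline\Omega} w \,\leq\, C\,\|f\|_{L^d(\Omega)},
$$
with $C$ depending only on $d,\lambda,\Lambda,C_0,\diam(\Omega),$ and $k$. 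Applied to our $u$ with $f \equiv 0$, this yields $\sup_{\overline\Omega} u \leq 0$, contradicting $u(x_0) > 0$. The hardest ingredient is the ABP estimate itself; once granted, the lemma reduces to the short contradiction argument just described.
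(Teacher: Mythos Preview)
Your reduction is sound, and your diagnosis of why the naive test-at-the-maximum fails (the positive-part structure of $\cP_k^{+}$ yields no strict sign) is exactly right. However, you then offload the entire content of the lemma onto an ABP estimate for $\cP^{+}+\cP_k^{+}+C_0|D\cdot|$. This is valid provided that estimate is genuinely available in the mixed local--nonlocal setting with drift and with kernels merely dominated by $k$; the natural reference is Mou~\cite{M19} rather than \cite{CS09} (purely nonlocal) or \cite{BM21}. Be careful with the latter: the paper says the present lemma is ``similar to \cite[Theorem~5.2]{BM21}'', so that result is most likely proved by the same barrier argument and citing it for ABP risks circularity. In any case, invoking ABP with $f\equiv 0$ to deduce $\sup_\Omega u\le\sup_{\Omega^c}u$ is using a strictly stronger theorem to prove its own special case, so you have traded one black box for a heavier one.

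The paper's proof is more elementary and avoids ABP altogether. From \cite[Lemma~5.5]{M19} one obtains a nonnegative $\chi\in C^2(\bar\Omega)\cap C_b(\Rd)$ with $\cP^{+}\chi+\cP_k^{+}\chi+C_0|D\chi|\le -1$ in $\Omega$. One then slides $\phi_M=M+\varepsilon\chi$ down to the smallest $M_0$ with $\phi_{M_0}\ge u$ on $\Rd$. If $M_0>\sup_{\Omega^c}u$, the contact point lies in $\Omega$ and the viscosity subsolution inequality forces $\cP^{+}\phi_{M_0}+\cP_k^{+}\phi_{M_0}+C_0|D\phi_{M_0}|\ge 0$, contradicting $\le -\varepsilon$. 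Hence $u\le M_0+\varepsilon\sup\chi\le\sup_{\Omega^c}u+\varepsilon\sup\chi$, and $\varepsilon\to 0$ finishes. This is precisely the remedy for the obstruction you identified: the strict supersolution $\varepsilon\chi$ supplies the negative sign that a bare paraboloid cannot. What the paper's route buys is a self-contained argument needing only a single barrier; what your route buys is brevity, at the cost of importing the full ABP machinery.
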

\begin{proof}
From \cite[Lemma 5.5]{M19} we can find a non-negative function 
$\chi\in \cC^2(\overline{\Omega})\cap\cC_b(\Rd)$ satisfying
$$\cP^{+} \chi+\cP_k^{+} \chi + C_0 |D\chi| \leq -1\quad \text{in}\; \Omega.$$
Note that, since $\chi\in C^2(\overline{\Omega})$,
the above inequality holds in the classical sense. 
For $ \varepsilon> 0$,
we let $\phi_M$ to be
$$
\phi_M (x) = M + \varepsilon \chi.
$$
Then $\cP^{+}\phi_{M}(x_0)+\cP^{+}_k\phi_{M} +C_0|D\phi_{M}|\leq - \varepsilon$ in $\Omega$.

Let $M_0$ be the smallest value of $M$ for which $\phi_M \geq u$ in $\Rd$. We show that
$M_0 \leq \sup_{\Omega^c } u$.  Suppose, to the contrary, that $M_0 >  \sup_{\Omega^c } u$. Then there must be a point $x_0 \in \Omega$ for which $u(x_0) = \phi_{M_0}(x_0).$ Otherwise using the upper semicontinuity of $u,$ we get a $M_1<M_0$ such that $\phi_{M_1}\geq u$ in $\Rd,$ which contradicts the minimality of $M_0$.  Now $\phi_{M_0}$ would touch $u$ from above at
$x_0$ and thus, by the definition of the viscosity subsolution, we would have that  $\cP^{+}\phi_{M_0}(x_0)+\cP^{+}_k\phi_{M_0}(x_0) +C_0|D\phi_{M_0}(x_0)| \geq 0$. This leads to a contradiction. Therefore,  
$M_0 \leq \sup_{\Omega^c } u$ which implies that
for every $x \in \Rd$
$$
u \leq \phi_{M_0} \leq M_0 + \varepsilon\, \sup_{\Rd}\chi \leq \sup_{\Omega^c} u +\varepsilon\, \sup_{\Rd}\chi.
$$
The result follows by taking $\varepsilon \rightarrow 0$.
\end{proof}
\begin{remark}\label{rem2.1}
    Although we have the above maximum principle, one cannot simply compare two viscosity sub and supersolutions for the operator \eqref{eq1.1}. More precisely, if $u,v$ are bounded functions and $u\in USC(\Rd), v\in LSC(\Rd)$ satisfy
    $$\cL u +C |Du|\geq f \,\,\,\,\text{and}\,\,\,\,\cL v + C |Dv|\leq g \,\,\,\, \text{in}\,\,\,\, \Omega $$ in viscosity sense for two continuous functions $f$ and $g,$  and for some $C \geq 0,$ then $\cL (u-v) + C |D(u-v)|\geq f-g$ may not always hold true in $\Omega$. However, if one of them is $C^2,$ then we have 
    \[ \cP^{+}(u-v)+\cP^{+}_k(u-v) +C |D(u-v)|\geq f-g\,\,\,\, \text{in}\,\,\, \Omega.\] Indeed, without loss of generality,  let us assume $v \in C^2(\Omega)$ and $\varphi$ be a $C^2$ test function that touches $u-v$ at $x \in \Omega$ from above then clearly $\varphi+v$ touches $u$ at $x$ from above. By definition of viscosity subsolution  we have 
    $\cL (\varphi+v)(x) +C |D(\varphi+v)(x)|\geq f(x),$ which implies $$\cP^+ \varphi (x) +\cP^+_k \varphi(x)+\cL v(x)+C |D \varphi(x)|+C |D v(x)|\geq f(x)$$ and hence we obtain $$\cP^+\varphi(x) +\cP^+_k\varphi(x)+C |D \varphi(x)|\geq f(x)-g(x).$$
\end{remark}

\section{Global Lipschitz regularity}\label{Section 3}In this section we establish the Lipschitz regularity of the solution $u$ up to the boundary. We start by showing that the distance function $\delta(x)$ is a barrier to $u.$
\begin{lemma}\label{Barrier1}
Let $\Omega $ be a bounded $C^{1,1}$ domain in $\Rd$ and $u$ be a continuous function which solves \eqref{Eq-1} in the viscosity sense. Then there exists a constant  $C$ which  depends only on $d, \lambda, \Lambda, C_0, \diam (\Omega)$, radius of exterior ball and $\int_{\Rd}(1\wedge |y|^{2}) k(y) \D{y}$, such that
\begin{equation}\label{EL2.1A}
|u(x)|\leq C K\delta(x)\quad \text{for all}\; x\in \Omega.
\end{equation}
\end{lemma}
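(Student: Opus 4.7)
The plan is to construct a Krylov-type local barrier at each boundary point $x_0\in\partial\Omega$ comparable to the distance function $\delta$, and conclude via the maximum principle \cref{A3.1} together with the Pucci-difference estimate of \cref{rem2.1}. As a preliminary step I would establish the global bound $\|u\|_{L^\infty(\Omega)}\leq C_1 K$. Let $\chi\in C^2(\overline\Omega)\cap C_b(\Rd)$, $\chi\geq 0$, be the auxiliary function used in the proof of \cref{A3.1} (from \cite[Lemma 5.5]{M19}), satisfying $\cP^+\chi+\cP^+_k\chi+C_0|D\chi|\leq-1$ in $\Omega$. Then $K\chi$ is $C^2$ with $\cL(K\chi)+C_0|D(K\chi)|\leq-K$, so by \cref{rem2.1} the function $w:=u-K\chi$ satisfies $\cP^+w+\cP^+_kw+C_0|Dw|\geq 0$ in $\Omega$. \cref{A3.1} then yields $\sup_\Omega w\leq\sup_{\Omega^c}w\leq 0$, whence $\sup_\Omega u\leq K\sup_\Rd\chi=:C_1 K$; the same argument on $-u$ gives $\|u\|_{L^\infty(\Omega)}\leq C_1K$.

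Next, fix $x_0\in\partial\Omega$ and let $\sB_{r_0}(y_0)\subset\Omega^c$ be the exterior ball at $x_0$, with $r_0>0$ uniform by the $C^{1,1}$ hypothesis. For $\beta>0$ to be chosen, consider the Krylov barrier $\phi(x):=r_0^{-\beta}-|x-y_0|^{-\beta}$ for $|x-y_0|\geq r_0$, extended by $0$ on $\sB_{r_0}(y_0)$ and mollified to $C^2$ across $\partial\sB_{r_0}(y_0)$. Then $\phi(x_0)=0$, $0\leq\phi\leq r_0^{-\beta}$ on $\Rd$, and since $|x-y_0|=r_0+\delta(x)$ for $x\in\Omega$ near $x_0$, one has $\phi(x)\leq\beta r_0^{-\beta-1}\delta(x)$. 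A direct computation in spherical coordinates gives $D^2\phi$ with a radial eigenvalue $-\beta(\beta+1)|x-y_0|^{-\beta-2}$ and $(d-1)$ tangential eigenvalues $\beta|x-y_0|^{-\beta-2}$, so
\[
\cP^+\phi(x)=\beta|x-y_0|^{-\beta-2}\bigl[\Lambda(d-1)-\lambda(\beta+1)\bigr]\leq-c_0\beta^2r_0^{-\beta-2}
\]
on the annulus $A:=\{r_0\leq|x-y_0|\leq 2r_0\}$ for $\beta$ large. The gradient $|D\phi|$ is at most linear in $\beta$ on $A$, and $\cP^+_k\phi$ is uniformly bounded on $A$ by Taylor-expanding $\phi$ for small jumps and using $\|\phi\|_\infty\leq r_0^{-\beta}$ for large jumps, together with $\int_\Rd(1\wedge|y|^2)k(y)\D y<\infty$. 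Hence, for $\beta$ large depending only on the quantities in the statement, $\cP^+\phi+\cP^+_k\phi+C_0|D\phi|\leq-1$ on $A\cap\Omega$.

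Finally, set $\psi:=MK\phi+K\chi$ with $M$ chosen so that $\psi\geq MK(1-2^{-\beta})r_0^{-\beta}\geq C_1K$ on $\Omega\cap\partial\sB_{2r_0}(y_0)$, while $\psi\geq 0=u$ on $\Omega^c$. Since both summands are $C^2$, $\cL\psi+C_0|D\psi|\leq-K$ on $A\cap\Omega$, and \cref{rem2.1} yields $\cP^+(u-\psi)+\cP^+_k(u-\psi)+C_0|D(u-\psi)|\geq 0$ there. Running the touching-point argument of \cref{A3.1} inside $D:=\Omega\cap\sB_{2r_0}(y_0)$ with auxiliary function $\psi+\varepsilon\chi$: any would-be touching point must lie in the interior of $D$ since $\psi+\varepsilon\chi$ dominates $u$ outside $D$, and at such a point $\cL(\psi+\varepsilon\chi)+C_0|D(\psi+\varepsilon\chi)|\leq-K-\varepsilon$ contradicts the subsolution property through \cref{rem2.1}. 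Letting $\varepsilon\to 0$ gives $u\leq\psi$ on $D$, hence $u(x)\leq CK\delta(x)$ in a uniform neighborhood of $\partial\Omega$ (using the expansion of $\phi$ and $\chi\leq\|D\chi\|_\infty\delta$ near $\partial\Omega$). Repeating the argument for $-u$ closes the two-sided bound; away from $\partial\Omega$, $\delta$ is bounded below by a constant depending only on $\Omega$, and the preliminary $\|u\|_\infty\leq C_1K$ finishes \eqref{EL2.1A}.

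The main obstacle is that \cref{A3.1} is a \emph{global} maximum principle on $\Omega$, whereas the Krylov barrier $\phi$ is only a classical supersolution on the thin annulus $A$; na\"ive gluing destroys $C^2$ regularity and the nonlocal operator couples far-apart regions. The hybrid $\psi=MK\phi+K\chi$ sidesteps this: the global $K\chi$-piece dominates $|u|$ outside $D$, so the touching-point proof of \cref{A3.1} can be localized to $D$, while the Krylov piece inside $D$ supplies the essential linear-in-$\delta$ control.
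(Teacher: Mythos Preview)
Your overall strategy matches the paper's exactly: first obtain $\|u\|_{L^\infty}\leq C_1K$ via the auxiliary function $\chi$, then compare with an annular barrier centered at each exterior ball. The first step is correct as written. The barrier step, however, has two genuine gaps.

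First, the claim ``$\chi\leq\|D\chi\|_\infty\delta$ near $\partial\Omega$'' is false: the function $\chi$ from \cite[Lemma~5.5]{M19} satisfies $\inf_{\Rd}\chi>0$ (this positivity is in fact used in the paper's own proof of the global bound \eqref{EL2.1B}). Hence $\psi=MK\phi+K\chi$ is \emph{not} controlled by $CK\delta$ near $\partial\Omega$, and your final conclusion collapses. The fix is simply to drop the $K\chi$ term: since $\phi\geq r_0^{-\beta}(1-2^{-\beta})$ on $\Omega\setminus\sB_{2r_0}(y_0)$, a suitable choice of $M$ already forces $MK\phi\geq C_1K\geq|u|$ outside $D$, so the extra $K\chi$ is both unnecessary and harmful. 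This is precisely what the paper does, taking $w=\varepsilon^{-1}\kappa K\varphi(\cdot-z)$ with no additive $\chi$.

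Second, the explicit Krylov barrier on the \emph{fat} annulus $\{r_0\leq|x-y_0|\leq2r_0\}$ does not verify $\cP^+\phi+\cP^+_k\phi+C_0|D\phi|\leq-1$ for large $\beta$. The Taylor bound on small jumps gives a nonlocal contribution of order $\beta^2(r_0/2)^{-\beta-2}\int_{|y|\leq r_0/2}|y|^2k(y)\,\D y$, while at the outer edge $\cP^+\phi\sim-\lambda\beta^2(2r_0)^{-\beta-2}$; the ratio is $4^{\beta+2}$, so the nonlocal positive part overwhelms the local negative part as $\beta\to\infty$, contrary to your claim that $\cP^+_k\phi$ is ``uniformly bounded''. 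Your unspecified mollification across $\partial\sB_{r_0}(y_0)$ does not help here. The paper avoids this entirely by quoting the ready-made barrier of \cite[Lemma~5.4]{M19}, which is built on a \emph{thin} annulus $\sB_{(1+\sigma)r_\circ}\setminus\bar{\sB}_{r_\circ}$ with $\sigma$ chosen depending on the data; on such an annulus the competing exponential factors stay comparable and the construction can be closed.
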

\begin{proof}
First we show that 
\begin{equation}\label{EL2.1B}
| u(x)|\leq \kappa\, K \quad x\in \Rd,
\end{equation}
for some constant $\kappa$. From \cite[Lemma 5.5]{M19}, there exists a non-negative function 
$\chi\in C^2(\overline{\Omega})\cap C_b(\Rd)$, with $\inf_{\Rd}\chi>0$, satisfying
$$
\cP^+ \chi  + \cP^+_k \chi+  C_0 |D \chi |  \leq -1\quad \text{in}\;\,\, \Omega.
$$
We define $\psi  =  K \chi $ which gives  that $\inf_{\Rd}\psi\geq 0$ and
\begin{equation*}\label{EL2.1C}
\cP^+ \psi  +\cP^+_k \psi +C_0 |D \psi |  \leq -K \quad \text{in}\;\,\, \Omega.
\end{equation*}
Then by using \cref{rem2.1}, we get
\[ \cP^+(u-\psi)+\cP^+_k(u-\psi)+C_0|D(u-\psi)|\geq 0.\] Now applying \cref{A3.1} on $u-\psi$ we obtain $$\sup_{\Omega} ( u-\psi) \leq  \sup_{\Omega^c} (u-\psi) \leq 0.$$ Note that in the second inequality above we used $u=0$ in $\Omega^c.$
This proves that $u\leq \psi$ in $\Rd$.
Similar calculation using $-u$ will also give us $-u\leq \psi$
in $\Rd$. Thus 
\begin{equation*}
| u | \leq \sup_{\Rd} |\chi|\, K \quad \text{in}\; \Rd,
\end{equation*}
which gives \eqref{EL2.1B}.

Now we shall prove \eqref{EL2.1A}. Since $\partial \Omega$ is 
 $C^{1,1}$, $\Omega$ satisfies a uniform exterior ball condition. Let $r_\circ$ be a radius satisfying uniform exterior ball condition. From \cite[Lemma~5.4]{M19} there exists a bounded, Lipschitz continuous function
$\varphi$, Lipschitz constant being $r_\circ^{-1}$, satisfying
\begin{align*}
\varphi &= 0  \quad \text{in} \quad \bar{\sB}_{r_\circ} , 
\\
\varphi &> 0 \quad \text{in} \quad \bar{\sB}_{r_\circ}^c , 
\\
\varphi &\geq \varepsilon \quad \text{in} \quad  \sB_{(1+\delta)r_\circ}^c   , 
\\
\cP^{+} \varphi + \cP^{+}_{k} \varphi + C_0 |D \varphi|  &\leq -1   \quad \text{in} \quad \sB_{(1+\delta)r_\circ}\setminus \bar{\sB}_{r_\circ},
\end{align*}
for some constants $\varepsilon, \delta$, dependent on $C_0 , d, \lambda,\Lambda, d$ and $\int_{\Rd}(1 \wedge |y|^2 ) {k}(y)\D{y}$.  Furthermore, $\varphi$
is $C^2$ in $\sB_{(1+\delta)r_\circ}\setminus \bar{\sB}_{r_\circ}$.
For any point $y\in\partial \Omega$,
we can find another point $z\in \Omega^c$ such that $\overline{\sB}_{r_\circ}(z) \subset \Omega^c$ touches
$\partial \Omega$ at $y$. 
Let 
$w(x)=\varepsilon^{-1}\kappa K  \varphi(x-z)$.  Also, $\cP^{+}(w) + \cP^{+}_{k}(w) + C_0 | D w|  \leq -K $. Then by using \cref{rem2.1} we have
 $$
 \cP^{+}(u-w)+\cP^{+}_k(u-w ) + C_0 | D( u -w)  |  \geq 0  \quad \text{in}\; \sB_{(1+\delta) r_\circ}(z)\cap \Omega.
$$  
Since, by \eqref{EL2.1B} $u-w\leq 0$ in $(\sB_{(1+\delta) r_\circ}(z)\cap \Omega)^c$, applying \cref{A3.1} on $u-w,$
 it follows that 
$u(x)\leq w(x)$ in $\Rd$. Repeating a similar calculation for $-u$, 
we can conclude that $|u(x)|\leq w(x)$ in $\Rd$.
Since this relation holds for any $y\in \partial \Omega,$
taking $x\in \Omega$ with $\dist(x, \partial \Omega)< r_\circ$, one can find
$y\in \partial \Omega$ satisfying $\dist(x, \partial \Omega)=\abs{x-y}< r_\circ.$ Then using the previous estimate
we would obtain
$$|u(x)|\leq \varepsilon^{-1} \kappa K  \varphi(x-z)
\leq \varepsilon^{-1} \kappa K (\varphi(x-z)-
\varphi(y-z))\leq \varepsilon^{-1} \kappa K \,
r_{\circ}^{-1}\dist(x, \partial \Omega),
$$
which gives us \eqref{EL2.1A}.
\end{proof}
Now we are ready to prove that $u\in C^{0,1}(\Rd)$.
\begin{proof}[Proof of \cref{T1.1}]
Let $x_0 \in \Omega$ and $s\in (0,1]$ be such that
$2s= \dist(x_0, \partial \Omega)\wedge 1$.  Without
loss of any generality, we assume $x_0=0$.
Define $v(x) = u(sx)$ in $\Rd$.
Using \cref{Barrier1} we already have $|u(x)| \leq C_1K\delta(x)$, from that one can deduce
   \begin{equation}\label{ET2.1B}
   | v(x) | \leq C_1\,K 
    s (1+\abs{x})\quad \quad \text{for all} \,\,\; x \in \Rd,
   \end{equation}
for some constant $C_1$ independent of $s$. We recall the scaled operator
$$\cI_{\theta \nu }^s [x,f]:=\int_{\Rd} (f(x+y) -f(x) - \Ind_{\sB_{\frac{1}{s}}}(y) \grad f(x) \cdot y   )s^{d+2}N_{\theta \nu}(sx, sy) \D{y}.$$

To compute $\cL^s [x, v] + C_0 s |D v(x)|$ in $\sB_2$, first we observe that $D^2 v(x) = s^2 D^2 u(sx)$ and $Dv(x) = s Du(sx)$. Also
\begin{align*}
   \cI_{\theta \nu }^s[x,v] &= s^2 \int_{\Rd} (v(x+y) -v(x)-\Ind_{\sB_{\frac{1}{s}}}(y)\grad v(x) \cdot y  ) N_{\theta \nu }(sx,sy) s^d \D{y} 
   \\
   &=s^2 \int_{\Rd} (u(sx+sy) -u(sx)-\Ind_{\sB_{1}}(sy)\grad u(sx) \cdot sy  ) N_{\theta \nu }(sx,sy) s^d \D{y} = s^2 \cI_{\theta \nu } [sx,u].
   \end{align*}
Thus, it follows from \eqref{Eq-1} that 
\begin{equation}\label{ET2.1C}
\begin{split}
\cL^s [x,v]  + C_0 s |D v(x)| &\geq -Ks^2 \quad \text{in} \quad \sB_2,
\\
\cL^s [x,v] - C_0 s|D v(x)| &\leq Ks^2 \quad \text{in} \quad \sB_2.
\end{split}
\end{equation}
Now consider a smooth cut-off function $\varphi, 0\leq\varphi\leq 1$, satisfying
\begin{equation*}
    \varphi=
    \begin{cases}
    1\quad \text{in}\; \sB_{3/2},\\
    0\quad \text{in}\; \sB^c_2.
    \end{cases}
\end{equation*}
Let $w = \varphi v$. Clearly,  $((\varphi-1)v)(y) = 0$ for all $y \in \sB_{3/2}$, which gives $D((\varphi-1)v) = 0 $ and $D^2 ((\varphi-1)v)=0$ in $x \in \sB_{3/2}$. Since $w = v + (\varphi-1)v$, from \eqref{ET2.1C} we obtain
\begin{equation}\label{ET2.1D}
\begin{split}
\cL^s [x,w]  + C_0 s |D w(x)| &\geq -Ks^2 - |\sup_{\theta \in \Theta} \inf_{\nu \in \Gamma} \cI_{\theta\nu }^s [x,(\varphi-1)v) ]| \quad \text{in} \quad \sB_1,
\\
\cL^s [x,w] - C_0 s|D w(x)| &\leq Ks^2  + |\sup_{\theta \in \Theta} \inf_{\nu \in \Gamma} \cI_{\theta\nu }^s [x,(\varphi-1)v) ]|  \quad \text{in} \quad \sB_1.
\end{split}
\end{equation} Again, since $(\varphi-1)v=0$ in $\sB_{3/2}$,
for  $x \in \sB_1, $ we have
\begin{align*}
    | \cI_{\theta \nu}^s [x,(\varphi-1)v]|&=\Big|\int_{|y|\geq 1/2} ((\varphi-1)v)(x+y)-((\varphi-1)v)(x))s^{d+2}N_{\theta\nu }(sx,sy)\D{y}\Big|
    \\
 &\leq \int_{|y|\geq 1/2} |v(x+y)|s^{d+2} N_{\theta \nu}(sx,sy)\D{y} := I. 
\end{align*}
Now we write 
\begin{align*}
    I&=\int_{1/2 \leq |y|\leq 1/s} |v(x+y)|s^{d+2} N_{\theta \nu}(sx,sy)\D{y}+\int_{|y|\geq 1/s} |v(x+y)|s^{d+2} N_{\theta \nu}(sx,sy)\D{y}\\
    &=I_{s,1}+I_{s, 2} \, .
\end{align*}
Let us first estimate $I_{s,1}.$ Since $x \in \sB_1$ and $|y| \geq \frac{1}{2} $ we have $1+|x+y| \leq 5 |y|.$ By using this estimate and \eqref{ET2.1B} we obtain 
\begin{align*}
I_{s,1} &= s^{d+2} \int_{\frac{1}{2}\leq\abs{y}\leq\frac{1}{s} } |v(x+y)| N_{\theta \nu}(sx, sy)\D{y} \\
&\leq 5C_1K \int_{\frac{1}{2}\leq\abs{y}\leq\frac{1}{s}} 
|sy|s^{d+2} k(sy)\D{y} 
\leq 5 C_1 K s \int_{\frac{s}{2}\leq\abs{z}\leq 1} |sz|k(z)\D{z} 
\\
& \leq    C_2 s \int_{\frac{s}{2}\leq\abs{z}\leq 1} |z|^2 k(z) \D{z} 
\leq    C_2 s \int_{\Rd} ( 1\wedge |y|^2) k(z) \D{z} 
 \leq C_3 s,  
\end{align*}
for some constants $C_3.$ For $I_{s,2}$, a change of variable and \eqref{EL2.1B} gives
\begin{align*}
I_{s,2}
\leq \kappa s^{2} K  \int_{ s \abs y >1 } s^d k(sy) \D{y}
&= \kappa s^{2}K  \int_{\abs y >1 }  k(y)\D{y}
\\
&\leq \kappa s^{2}K   \int_{\Rd}(1 \wedge |y|^2) k(y) \D{y} 
\leq C_4 s^{2}K 
\end{align*}
for some constant $C_4$. Therefore, putting the estimates of 
$I_{1}$ and $I_{2}$ in \eqref{ET2.1D} we obtain
\begin{equation}\label{ET2.1E}
\begin{split}
\cL^s [x,w]  + C_0 s |D w(x)| &\geq - C_5 K s \quad \text{in} \quad \sB_1,
\\
\cL^s [x,w] - C_0 s|D w(x)| &\geq  C_5 K s \quad \text{in} \quad \sB_1,
\end{split}
\end{equation}
for some constant $C_5$. Now applying \cref{TH2.1}, from \eqref{ET2.1E}
we have
\begin{equation}\label{ET2.1F}
 \norm{v}_{C^{1} (\sB_{\frac{1}{2}}) } \leq C_6 \Big( \norm{v}_{L^{\infty}(\sB_2)}+ s  K \Big)   
\end{equation}
for some constant $C_6$. From
\eqref{ET2.1B} and \eqref{ET2.1F} we then obtain
\begin{equation}\label{ET2.1G}
\sup_{y\in \sB_{s/2}(x), y\neq x}\frac{|u(x)-u(y)|}{|x-y|}
\leq C_7 K,
\end{equation}
for some constant $C_7$.

Now we can complete the proof. Note that if $|x-y|\geq \frac{1}{8}$, then 
$$\frac{|u(x)-u(y)|}{|x-y|}\leq 2\kappa K,$$
by \eqref{EL2.1B}. So we consider $|x-y|< \frac{1}{8}$. If 
$|x - y| \geq 8^{-1} ( \delta(x) \vee \delta(y))$, then using
\cref{Barrier1} we get
 $$
\frac{| u(x) - u(y)|  }{|x - y|}  
\leq 4CK (\delta(x)+\delta(y)) ( \delta(x) \vee \delta(y))^{-1}  \leq 8CK.
 $$
Now let $|x - y| < 8^{-1} \min\{ \delta(x) \vee \delta(y), 1\}$.
Then either $y\in \sB_{\frac{\delta(x)\wedge 1}{8}}(x)$ or $x\in \sB_{\frac{\delta(y)\wedge 1}{8}}(y)$. Without loss of generality, we suppose
$y\in \sB_{\frac{\delta(x)\wedge 1}{8}}(x)$. 
From \eqref{ET2.1G} we get
$$\frac{| u(x) - u(y)|  }{|x - y|}\leq C_7 K.$$
This completes the proof.
\end{proof}

\section{Fine boundary regularity}\label{Section 4}
The aim of this section is to prove \cref{T1.2}. Since $u$ is Lipschitz, \eqref{Eq-1} can be written as
\[|\cL u| \leq CK\,\,\,\, \text{in}\,\,\, \Omega, \,\,\,\,\,\, \text{and}\,\,\,\, u=0\,\,\,\, \text{in}\,\,\, \Omega^c.\]We start by constructing subsolutions which will be useful later on to prove oscillation lemma.
\begin{lemma}\label{L2.2} 
There exists a constant $\tilde\kappa$, which depends only on $d, \lambda, \Lambda ,\int_{\Rd} (1\wedge|y|^2) k(y)\D{y}$,  such that for any $r\in (0, 1]$,  we have a bounded radial function $\phi_r \in C^2(\sB_{4r}\setminus \bar{\sB}_r)$ satisfying
\begin{equation*}
\begin{cases}
\cP^{-} \phi_r + \cP^{-}_{k} \phi_r \geq 0   &\text{in} \;\; \sB_{4r} \setminus \bar{\sB}_r, 
\\
0 \leq \phi_r \leq \tilde\kappa r  & \text{in} \;\; \sB_r , 
\\
\phi_r \geq \frac{1}{\tilde\kappa} (4r - \abs x) & \text{in} \; \; \sB_{4r} \setminus \sB_r , 
\\
\phi_r \leq 0 & \text{in} \;\;  \Rd \setminus \sB_{4r}.
\end{cases}
\end{equation*}
\end{lemma}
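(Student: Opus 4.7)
The plan is to construct $\phi_r$ explicitly as a radial, truncated power-law profile and verify the four pointwise conditions and the differential inequality by direct computation. Set
\begin{align*}
\phi_r(x) = cr\cdot\begin{cases} 1-4^{-p} & |x|\leq r,\\ (|x|/r)^{-p}-4^{-p} & r\leq|x|\leq 4r,\\ 0 & |x|\geq 4r,\end{cases}
\end{align*}
where $c$ and $p$ are positive constants to be fixed below. This $\phi_r$ is continuous on $\Rd$, smooth on $\sB_{4r}\setminus\bar{\sB}_r$, non-negative on $\sB_{4r}$, and vanishes outside $\sB_{4r}$. The pointwise bounds on $\sB_r$ and $\Rd\setminus\sB_{4r}$ are immediate, while the inequality $\phi_r(x)\geq\tilde\kappa^{-1}(4r-|x|)$ on the annulus follows from the convexity of $\sigma\mapsto\sigma^{-p}-4^{-p}$ on $(0,4]$ together with the tangent-line bound at $\sigma=4$, yielding $\phi_r(x)\geq cp\cdot 4^{-p-1}(4r-|x|)$ and hence the third condition with $\tilde\kappa = \max\{c(1-4^{-p}),(cp\cdot 4^{-p-1})^{-1}\}$.

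For the local Pucci operator, the Hessian of the radial profile $\eta_r(s)=cr\bigl((s/r)^{-p}-4^{-p}\bigr)$ on the annulus has a single positive eigenvalue $\eta_r''(s)=cp(p+1)r^{p+1}s^{-p-2}$ and $d-1$ negative tangential eigenvalues $\eta_r'(s)/s=-cpr^{p+1}s^{-p-2}$, so a direct computation gives
\begin{align*}
\cP^{-}\phi_r(x)=cp\bigl[\lambda(p+1)-\Lambda(d-1)\bigr]\,r^{p+1}|x|^{-p-2}\quad\text{on}\quad\sB_{4r}\setminus\bar{\sB}_r.
\end{align*}
Fixing any $p>\Lambda(d-1)/\lambda-1$ makes the bracket strictly positive, and since $|x|^{-p-2}\geq (4r)^{-p-2}$ on the annulus one obtains the uniform lower bound $\cP^{-}\phi_r(x)\geq A_1/r$ with $A_1 := cp[\lambda(p+1)-\Lambda(d-1)]\cdot 4^{-p-2}>0$ depending only on $c,p,\lambda,\Lambda,d$.

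The main technical obstacle is to bound $|\cP^{-}_{k}\phi_r(x)|$ by $A_1/r$ on the annulus, which I would address by splitting the integrand defining $\cP^{-}_{k}\phi_r(x)$ according to the location of $x+y$. If $x+y\in\sB_r$, then $\phi_r$ attains its maximum there, so $\phi_r(x+y)-\phi_r(x)\geq 0$ and only the gradient-correction term can make the bracket negative; its contribution is controlled using that $\nabla\phi_r$ is of size $cp$ and the kernel moment $\int(1\wedge|y|^2)k(y)\D y<\infty$. If $x+y\in\sB_{4r}\setminus\bar{\sB}_r$, one uses a second-order Taylor expansion on subsets of the annulus where the interior $C^2$ bound $\|D^2\phi_r\|=O(cp(p+1)/r)$ is valid, exploiting crucially that by radial convexity of $\eta_r$ the radial component of the Taylor remainder is non-negative, so only the tangential component (with solid-angle weight $O(1/\sqrt{p})$) contributes to the negative part. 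If $x+y\in\Rd\setminus\sB_{4r}$, the negative part is controlled by the linear vanishing $\phi_r(x)\leq cp\cdot 4^{-p-1}(4r-|x|)$ together with the kernel mass of the corresponding region. In each regime the contribution scales as $1/r$, matching $A_1/r$, and the balancing — choosing $p$ so that the positive local coefficient $[\lambda(p+1)-\Lambda(d-1)]\cdot 4^{-p-2}$ dominates the aggregate nonlocal constant built from the moment $\int(1\wedge|y|^2)k(y)\D y$ — is the heart of the argument. The construction and the balancing parallel the corresponding barrier in the linear mixed case of \cite{BMS22}, and once the nonlocal estimate is in hand, the inequality $\cP^{-}\phi_r+\cP^{-}_{k}\phi_r\geq 0$ on $\sB_{4r}\setminus\bar{\sB}_r$ follows directly.
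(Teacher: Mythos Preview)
Your construction is genuinely different from the paper's: you use the classical Hopf power-law barrier $|x|^{-p}$, while the paper (following \cite{BMS22}, which does \emph{not} use a power-law) takes the Gaussian-type profile $v_r(x)=e^{-\eta q(x)}-e^{-\eta(4r)^2}$ with $q(x)=|x|^2\wedge 2(4r)^2$ and sets $\phi_r=rv_r$. So your appeal to \cite{BMS22} for the balancing is misplaced.

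More importantly, there is a real gap in the nonlocal estimate near the inner boundary $|x|\to r^+$. Because $|x|^{-p}$ blows up at the origin, your $\phi_r$ is truncated to the constant $cr(1-4^{-p})$ on $\sB_r$, and this inner truncation makes $\phi_r(x+y)$ strictly smaller than the smooth extension $\tilde\phi_r(x+y)=cr((|x+y|/r)^{-p}-4^{-p})$ whenever $x+y\in\sB_r$. The resulting extra negative contribution to the integrand is $cr[(|x+y|/r)^{-p}-1]$, and for, say, $k(y)=|y|^{-d-1}$ a direct computation shows that for $|x|=(1+\delta)r$ one has
\[
\int_{x+y\in\sB_r}\bigl(\phi_r(x+y)-\phi_r(x)-\nabla\phi_r(x)\cdot y\bigr)^-k(y)\,\D y\;\gtrsim\; cp\,\log(1/\delta),
\]
which diverges as $\delta\to 0^+$, while $\cP^-\phi_r(x)\approx cp^2\lambda/r$ stays bounded. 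Hence no choice of $p$ (or $c$) yields $\cP^-\phi_r+\cP^-_k\phi_r\ge 0$ on the full open annulus. Your claim that ``choosing $p$ so that the positive local coefficient $[\lambda(p+1)-\Lambda(d-1)]\cdot 4^{-p-2}$ dominates'' is also problematic on its face: that coefficient \emph{decreases} to $0$ as $p\to\infty$, so it cannot absorb an arbitrary kernel moment.

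The paper's exponential barrier avoids exactly this difficulty. Since $e^{-\eta|x|^2}$ is globally smooth and bounded, no inner truncation is needed; the only truncation is at the outer region where it \emph{raises} the value and hence only helps. Convexity of $t\mapsto e^{-\eta t}$ gives the pointwise bound $(\text{integrand})\geq -\eta e^{-\eta|x|^2}|y|^2$ for all $y$, so $\cP^-_k v_r(x)\ge -\eta e^{-\eta|x|^2}\cdot C\!\int(1\wedge|y|^2)k(y)\,\D y$, which carries the \emph{same} prefactor $\eta e^{-\eta|x|^2}$ as the dominant local term $4\eta^2\lambda|x|^2 e^{-\eta|x|^2}$. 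Taking $\eta\sim r^{-2}$ times a constant depending on $d,\lambda,\Lambda$ and the kernel moment then makes the local term win uniformly on $\sB_{4r}\setminus\bar\sB_r$.
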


\begin{proof}
We use the same subsolution constructed in \cite{BMS22} and show that it is indeed a subsolution with respect to minimal Pucci operators.
Fix $r\in(0, 1]$  and define $v_r(x) = \E^{- \eta q(x)} - \E^{-  \eta (4 r)^2}$, where $q(x) = |x|^2 \wedge 2(4 r)^2$ and
$\eta >\frac{1}{r^2}$.  Clearly, $1\geq v_r(0) \geq v_r(x)$ for all $x \in \Rd$. Thus,  using the fact that $1-e^{-\xi}\leq \xi$ for all $\xi\geq 0$ we have

\begin{equation}\label{EL2.2A}
v_r(x) \leq 1- \E^{-\eta(4r)^2}\leq \eta(4r)^2,
\end{equation}
Again for $x \in \sB_{4r} \setminus \sB_r$, we have that
\begin{align}\label{EL2.2B}
v_r(x) = \E^{-\eta (4r)^2 } ( \E^{\eta ((4r)^2 - q(x))} -1  )
&\geq \eta \E^{-\eta (4r)^2 }  ((4r)^2 - |x|^2) \nonumber
\\
& = \eta \E^{-\eta (4r)^2 }  (4r + |x|)  (4r - |x|) 
 \geq 5  \eta r  \E^{-\eta (4r)^2 }   (4r - |x|).
\end{align}
Fix $x\in \sB_{4r} \setminus \bar{\sB}_r$. We start by estimating the local minimal Pucci operator $\cP^{-}$ of $v$.  Using rotational symmetry we may always assume $x = (l , 0, \cdots,0 )$ Then
$$\partial_{i} v_r(x) = -2 \eta \E^{- \eta \vert x  \vert^2} x_i = \begin{cases}  -2 \eta \E^{- \eta \vert x  \vert^2} l & i=1,\\
0 & i \ne 1 \end{cases}  $$
and
\begin{align*}
\partial_{ij} v_r(x) &= \begin{cases} 4\eta^2 x_i^2  \E^{- \eta \vert x  \vert^2} - 2\eta \E^{- \eta \vert x  \vert^2} & i=j, \\
4\eta^2 x_ix_j \E^{- \eta \vert x  \vert^2} & i \ne j .
\end{cases} \\
&= \begin{cases} 4\eta^2 l^2  \E^{- \eta \vert x  \vert^2} - 2\eta \E^{- \eta \vert x  \vert^2} & i=j =1,\\
 - 2\eta \E^{- \eta \vert x  \vert^2} & i=j \ne 1, \\
0 & i\ne j.
\end{cases}
\end{align*}
By the above computation,  for any $x\in \sB_{4r} \setminus \bar{\sB}_r,$ we have
\begin{align*}
\cP^{-}v_r(x) &=  \lambda 4\eta^2 l^2  \E^{- \eta \vert x  \vert^2} - \lambda 2\eta \E^{- \eta \vert x  \vert^2} - \Lambda(d-1) 2\eta \E^{- \eta \vert x  \vert^2} \\
&= \lambda 4\eta^2 l^2  \E^{- \eta \vert x  \vert^2} +(\Lambda-\lambda)2\eta e^{-\eta|x|^2} - d \Lambda  2\eta \E^{- \eta \vert x  \vert^2}\\
&\geq \lambda 4\eta^2 l^2  \E^{- \eta \vert x  \vert^2}- d \Lambda  2\eta \E^{- \eta \vert x  \vert^2}.
\end{align*}
Now to determine nonlocal minimal Pucci operator, 
using the convexity 
of exponential map we get,  
\begin{align*}
&e^{-\eta |x+y|^2}-e^{-\eta |x|^2}+2 \eta \Ind_{\{|y|\leq 1\}} y\cdot x e^{-\eta |x|^2}
\\
&\qquad\geq -\eta e^{-\eta | x|^2}\left(|x+y|^2-|x|^2-2\Ind_{\{|y|\leq 1\}}
y\cdot x\right).
\end{align*}
Since $\cP^{-}_k v_r=\cP^{-}_k(v_r + e^{-\eta (4r)^2})$ and using above inequality we obtain
\begin{align*}
\cP^{-}_k(e^{-\eta q(\cdot)})(x)&=-\int_{\Rd}\left(e^{-\eta q(x+y)}-e^{-\eta q(x)}-\Ind_{\sB_1}(y)\grad e^{-\eta q(x)}\cdot y\right)^{-}k(y)dy\\
&\geq 
-\eta e^{-\eta|x|^2}\int_{|y| \leq r} \Big( |x+y|^2 - |x|^2  - 2
y\cdot x  \Big) k(y) \D{y} \\
&- \int_{r < |y| \leq 1} \left| e^{-\eta (|x|^2 + 2(4r)^2)} - e^{-\eta |x|^2} + 2\eta y\cdot x e^{-\eta |x|^2} \right| k(y) \D{y} \\
 &- \int_{|y| >1} \left| e^{-\eta (|x|^2 +2(4r)^2)} - e^{-\eta |x|^2} \right|  k(y) \D{y}
   \\
 &\geq -\eta e^{-\eta|x|^2} \left[ \int_{\abs{y} <r}   \abs{y}^2 k(y) \D{y} + \int_{r < |y| \leq 1} \left( \left| \frac{ 1-  e^{-2\eta(4r)^2 } }{\eta}\right| + 2|x \cdot y| \right) k(y)\D{y} \right] \\
 &-\eta e^{-\eta|x|^2} \int_{|y| >1} \left| \frac{ 1-  e^{-2\eta(4r)^2 } }{\eta} \right| k(y) \D{y} \\
   &\geq -\eta e^{-\eta|x|^2} \left[ \int_{\abs{y} <r}   \abs{y}^2 k(y) \D{y} + \int_{ r < \abs{y} \leq 1} 4^3  \abs{y}^2 k(y) \D{y} + \int_{\abs{y} >1}   2(4r)^2 k(y) \D{y} \right] \\
&\geq -\eta e^{-\eta|x|^2} 4^3 \int_{\Rd} (1 \wedge |y|^2) k(y) \D{y},
\end{align*}
where in the second line we used $|x+y|^2\wedge 2(4r)^2\leq |x|^2+ 2(4r)^2$.  Combining the above estimates we obtain that, for $x\in \sB_{4r}\setminus\bar{\sB}_r$,
\begin{align*}
\mathcal{P}^{-} v_r(x) + \mathcal{P}_k^{-} v_r(x) &\geq \eta \E^{- \eta \abs{x}^2} \Bigl[4\eta  \lambda |x|^2  - 2 d \Lambda - 4^3 \int_{\Rd} (1 \wedge |y|^2) k(y) \D{y} \Bigr] \\
& \geq \eta \E^{- \eta \abs{x}^2} \Bigl[4\eta  \lambda r^2  - 2 d \Lambda - 4^3 \int_{\Rd} (1 \wedge |y|^2) k(y) \D{y} \Bigr].
\end{align*}
Thus, finally letting $\eta= \frac{1}{ \lambda r^2}(2d\Lambda + 4^3 \int_{\Rd} (1 \wedge |y|^2) k(y) \D{y} )$, we obtain 
$$\cP^{-}v_r+\cP^{-}v_r>0\quad \text{in}\; \sB_{4r}\setminus\bar{\sB}_r.$$ Note that the final choice of $\eta$ is admissible since $\frac{1}{ \lambda r^2}(2d\Lambda + 4^3 \int_{\Rd} (1 \wedge |y|^2) k(y) \D{y} )>\frac{1}{r^2}.$
Now set $\phi_r= r v_r$ and the result follows from \eqref{EL2.2A}-\eqref{EL2.2B}.
\end{proof}
Next we prove a \textit{Harnack type} inequality which will play an important role in subsequent analysis.
\begin{theorem}[Harnack type inequality]\label{T5.1}
Let $s\in (0,1],$ $\alpha^{\prime}=1\wedge(2-\alpha)$ and $u$ be a continuous  non-negative function satisfying
$$  \mathcal{P}^{-} u + \mathcal{P}^{-}_{k,s} u \leq C_0 s^{1+\alpha^{\prime}}, \quad  \mathcal{P}^{+} u + \mathcal{P}^{+}_{k,s} u \geq -C_0 s^{1+\alpha^{\prime}}  \quad \text{in}\; \sB_2.$$ Furthermore if $\sup_{\Rd} u \leq M_0$ and $u(x) \leq M_0 s(1+|x|)$ for all $x\in \Rd,$
then 
$$u(x) \leq  C ( u(0) + (M_0\vee C_0)s^{1+\alpha^{\prime}})$$
 for every $x \in \sB_{\frac{1}{2}}$ and for some constant
$C $ which only depends on  $\lambda, \Lambda, d,  \int_{\mathbb{R}^d} (1 \wedge |y|^{\alpha}) k(y)\D{y} .$
\end{theorem}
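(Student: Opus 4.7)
The plan is to view the scaled nonlocal terms $\cP^{\pm}_{k,s}u$ as a perturbation of order $s^{1+\alpha'}$, thereby reducing the weak Harnack inequality to the classical Krylov--Safonov Harnack inequality for the local Pucci extremal operators $\cP^{\pm}$. The two-sided bound $|u|\leq M_0$ on $\Rd$ together with the linear growth bound $|u(x)|\leq M_0 s(1+|x|)$ are tailored precisely to allow such a perturbation estimate at scale $s$, and the appearance of $\alpha'=1\wedge(2-\alpha)$ should reflect the threshold scale (namely $|z|\sim s$ after the natural change of variables) at which these two bounds are exchanged.

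Concretely, for an admissible $C^{2}$ test function $\varphi$ touching $u$ from above or below at a point $x_0\in\sB_1$, I would set $v=\varphi$ on a small ball $\sB_r(x_0)$ and $v=u$ outside, and then estimate $\cP^{\pm}_{k,s}v(x_0)$ by splitting the integration domain into a near range $|y|\leq r$ (where the $C^{1,1}$-bound of $\varphi$ makes the integrand at most $\tfrac12\norm{D^2\varphi}_{\infty}|y|^2$), a mid range $r\leq|y|\leq 1/s$ (where $|u(x_0+y)|\leq M_0 s(1+|x_0+y|)\lesssim M_0 s|y|$), and a far range $|y|>1/s$ (where $|u|\leq M_0$). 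After the change of variables $z=sy$, the integrability assumption $\int_{\Rd}(1\wedge|y|^{\alpha})k(y)\,\D y<+\infty$ combined with the elementary bounds $|z|^{2}\leq s^{2-\alpha}|z|^{\alpha}$ on $|z|\leq s$ and $|z|\leq s^{1-\alpha}|z|^{\alpha}$ on $|z|\leq 1$ when $\alpha\geq 1$ should yield contributions of order $s^{2-\alpha}\norm{D^2\varphi}_{\infty}$, $M_0 s^{1+\alpha'}$, and $M_0 s^{2}$, respectively.

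After absorbing the $s^{2-\alpha}\norm{D^2\varphi}_{\infty}$ factor back into the local Pucci evaluation at $x_0$ (harmless, since the prefactor is small in $s$), these estimates promote $u$ to a viscosity solution of the purely local extremal inequalities $\cP^{-}u\leq C(M_0\vee C_0)s^{1+\alpha'}$ and $\cP^{+}u\geq -C(M_0\vee C_0)s^{1+\alpha'}$ in $\sB_2$, for a universal constant $C$. The classical Krylov--Safonov Harnack inequality for uniformly elliptic equations in non-divergence form now yields $\sup_{\sB_{1/2}}u\leq C\bigl(\inf_{\sB_{1/2}}u+(M_0\vee C_0)s^{1+\alpha'}\bigr)$, and since $0\in\sB_{1/2}$ gives $\inf_{\sB_{1/2}}u\leq u(0)$, this is precisely the claimed estimate.

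The main obstacle will be the reduction step from the mixed inequality to the purely local one. Since a generic paraboloid test function may carry an arbitrary Hessian, one must either restrict attention to a $C^{1,1}$-based viscosity formulation and argue through sup/inf-convolution regularisations of $u$, or employ a suitable family of smooth test functions (in the spirit of Caffarelli--Silvestre) so that $\norm{D^2\varphi}_{\infty}$ is controlled by $|\cP^{\pm}\varphi|$ up to factors of order $s^{2-\alpha}$. Tracking the $s$-dependence uniformly across the near/mid/far decomposition---in particular verifying that the mid range is the bottleneck producing the exponent $\alpha'$, trivially when $\alpha\leq 1$ since $|z|\leq|z|^{\alpha}$ on $|z|\leq 1$, and via $|z|\leq s^{1-\alpha}|z|^{\alpha}$ when $\alpha>1$---is the core technical step on which the whole argument rests.
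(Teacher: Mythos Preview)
Your reduction to the purely local Krylov--Safonov Harnack has a genuine gap: the gradient correction in $\cP^{\pm}_{k,s}$ cannot be controlled. When a test function $\varphi$ touches $u$ at $x_0$ and you form $v=\varphi$ on $\sB_r(x_0)$, $v=u$ outside, the integrand of $\cP^{\pm}_{k,s}v(x_0)$ contains $-\Ind_{\sB_{1/s}}(y)\,\nabla\varphi(x_0)\cdot y$. In the mid range $r\le|y|\le 1/s$ this produces, after $z=sy$, a contribution $|\nabla\varphi(x_0)|\int_{rs\le|z|\le 1}|z|\,k(z)\,\D z$. No symmetry of $k$ is assumed, and for $\alpha>1$ only $\int_{|z|\le 1}|z|^{\alpha}k(z)\,\D z<\infty$ is available, so the best bound is $|\nabla\varphi(x_0)|\,(rs)^{1-\alpha}s$; even for $\alpha\le 1$ you are left with $|\nabla\varphi(x_0)|\,s$. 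Viscosity test functions can have arbitrarily large gradients at the touching point, and neither inf/sup-convolution nor restricting to paraboloids bounds $|\nabla\varphi(x_0)|$. Thus the inequality $\cP^{-}u\le C(M_0\vee C_0)s^{1+\alpha'}$ cannot be established from the mixed hypothesis. A secondary issue is that the absorption of $s^{2-\alpha}\|D^2\varphi\|$ into $\cP^{\pm}$ requires $s$ small, whereas the theorem must hold uniformly for $s\in(0,1]$; for $s$ near $1$ there is no prior Harnack inequality for the mixed operator to invoke---this theorem is precisely meant to supply one.

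The paper takes a different route: it never separates the local and nonlocal parts. It uses the $L^{\varepsilon}$ measure estimate for $\cP^{\pm}+\cP^{\pm}_{k,s}$ from Mou \cite[Corollary~3.14]{M19} and runs the Caffarelli--Silvestre barrier argument with the envelope $h_t(x)=t(1-|x|)^{-\gamma}$, bounding the minimal $t_0$. The growth bounds $|u|\le M_0$ and $|u|\le M_0 s(1+|x|)$ enter only when estimating the nonlocal tail $\cP^{+}_{k,s}v^{-}$ for $v=(1-\uptheta/2)^{-\gamma}u(x_0)-u$; because $v^{-}\equiv 0$ on a neighbourhood of the evaluation point, one has $\nabla v^{-}=0$ there and the gradient correction in the nonlocal operator vanishes identically. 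This device is exactly what circumvents the obstruction your perturbative scheme runs into.
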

\begin{proof}
    Dividing by $u(0)+(M_0 \vee C_0)s^{1+\alpha^{\prime}},$ it 
    can be easily seen that $\sup_{\Rd}|u| \leq s^{-(1+\alpha^{\prime})}$ and $|u(x)| \leq s^{-\alpha^{\prime}}(1+|x|)$ for all $x\in \Rd$  and $u$ satisfies
    \begin{equation*}
    \begin{aligned}
       &\mathcal{P}^{-} u + \mathcal{P}^{-}_{k,s} u \leq 1, \\
       &\mathcal{P}^{+} u + \mathcal{P}^{+}_{k,s} u \geq -1.
    \end{aligned}
    \end{equation*}
Fix
$\varepsilon > 0$ (see $\epsilon_3$ in \cite[Corollary~3.13]{M19}) and
let $\gamma = \frac{d}{\varepsilon}$. 
Let
$$
t_0\df\min\left\{t\; :\; u(x) \leq h_{t}(x) \df t(1 - \abs x)^{- \gamma} \;\; \text{for all} \; x \in \sB_1\right\}.
$$
Clearly this set is nonempty since $u(0) \leq 1$, thus $t_0$ exists. Let $x_0 \in \sB_1$ be such that $u(x_0) = h_{t_0}(x_0)$. Let  
$\eta = 1 - \vert x_0 \vert$ be the distance of $x_0$ from $\partial \sB_1$. For $r = \frac{\eta}{2}$  and $x\in \sB_r(x_0),$ we can write
\[\sB_r(x_0)=\left\{u(x) \leq \frac{u(x_0)}{2}\right\}\cup \left\{u(x) >\frac{u(x_0)}{2}\right\}:=A\cup\tilde{A}.\] Our goal is to estimate $|\sB_r(x_0)|$ in terms of $|A|$  and $|\tilde{A}|.$ Proceeding this way, we show that $t_0<C$ for some universal $C$ which, in turn, implies that $u(x) < C (1 -  \abs{x})^{-\gamma}$.
This would prove our result.
Next, Using \cite[Corollary~3.13]{M19} we obtain
$$
\vert \tilde{A} \cap \sB_1 \vert \leq C \bigg\lvert \frac{2}{u(x_0)} \bigg\rvert^{\varepsilon} \leq C t_0^{-\varepsilon}
\eta^d\,,
$$
whereas $\vert \sB_r \vert = \omega_d (\eta/2)^d$. In particular,
\begin{equation}\label{ET5.1A0}
\big\lvert \tilde{A}  \cap \sB_r(x_0)  \big\rvert \leq 
C t_0^{- \varepsilon} \vert \sB_r \vert .
\end{equation}
So if $t_0$ is large,
$\tilde{A}$ can cover only a small portion of $\sB_r(x_0)$. We shall show that for some $\delta>0$,
independent of $t_0$ we have 
$$ \vert A \cap \sB_r(x_0) \vert \leq  (1- \delta) \vert \sB_r \vert ,$$
which will provide an upper bound on $t_0$ completing the proof. We start by estimating $ \vert A \cap \sB_{\uptheta r} (x_0) \vert $ for $ \uptheta > 0$ small. For every $ x \in \sB_{\uptheta r}(x_0)$ we have
$$
u(x) \leq\, h_{t_0} (x) \,\leq\, t_0 \left( \frac{2\eta - \uptheta \eta}{2} \right)^{- \gamma} 
\,\leq\, u(x_0) \left( 1 - \frac{\uptheta }{2} \right)^{- \gamma},
$$
with $\left( 1 - \frac{\uptheta }{2} \right)$ close to 1.
Define
$$
v(x) \df \left( 1 - \frac{\uptheta }{2} \right)^{- \gamma} u(x_0) - u(x).
$$
Then we get $v \geq 0$ in $\sB_{ \uptheta r}(x_0)$ and also $\mathcal{P}^{-}v+\mathcal{P}^{-}_{k,s} v \leq 1$ as 
$\mathcal{P}^{+}u+\mathcal{P}^{+}_{k,s} u \geq -1.$

We would like to apply \cite[Corollary~3.13]{M19} to $v,$ but $v$ need not be non-negative in the whole of $\Rd$. Thus, we consider the positive part of $v,$ i.e., $w=v^+$ and find an upper bound of $\mathcal{P}^{-}w+\mathcal{P}^{-}_{k,s}w$. Since $v^-$ is $C^2$ in $\sB_{\frac{\uptheta r}{4}}(x_0),$ we have
\begin{align}\label{eqq4.4}
\mathcal{P}^{-}w(x)+\mathcal{P}^{-}_{k,s}w(x) \leq [\mathcal{P}^{-}v(x)+\mathcal{P}^{-}_{k,s}v(x)]+[\mathcal{P}^{+}v^{-}(x)+\mathcal{P}^{+}_{k,s}v^{-}(x)] \leq 1+ \mathcal{P}^{+}v^{-}(x)+\mathcal{P}^{+}_{k,s}v^{-}(x).
\end{align}
Also, using $v^{-}(x)=Dv^{-}(x)=D^2v^{-}(x)=0$ for all $x \in \sB_{\frac{\uptheta r}{4}}(x_0),$ we
get
\begin{align}\label{eqq4.5}
\mathcal{P}^{+}v^{-}(x)+\mathcal{P}^{+}_{k,s}v^{-}(x)=\int_{\Rd\cap \{v(x+y) \leq 0\}}v^{-}(x+y)s^{d+2}k(sy)\D{y}.
\end{align}
Now plugging \eqref{eqq4.5} into \eqref{eqq4.4}, for all $x \in \sB_{\frac{\uptheta r}{4}}(x_0)$ we obtain
\begin{align*}
&\mathcal{P}^{-}w(x)+\mathcal{P}^{-}_{k,s}w(x)
\leq 1+\int_{\Rd \setminus \sB_{\frac{\uptheta r}{2}}(x-x_0)} \left(u(x+y)-\left( 1 - \frac{\uptheta }{2} \right)^{- \gamma} u(x_0) \right)^{+} s^{d+2}k(sy) \D{y}\\
&\leq 1+\int_{\Rd \setminus \sB_{\frac{\uptheta r}{2}}(x-x_0)} \left|u(x+y)\right|s^{d+2}k(sy)\D{y}+\int_{\Rd \setminus \sB_{\frac{\uptheta r}{2}}(x-x_0)} \left|\left( 1 - \frac{\uptheta }{2} \right)^{- \gamma} u(x_0)\right| s^{d+2}k(sy)\D{y}\\
&\leq 1+\int_{\Rd \setminus \sB_{\frac{\uptheta r}{4}}} \left|u(x+y)\right|s^{d+2}k(sy)\D{y}+\int_{\Rd \setminus \sB_{\frac{\uptheta r}{4}}} \left|\left( 1 - \frac{\uptheta }{2} \right)^{- \gamma} u(x_0)\right| s^{d+2}k(sy)\D{y}
:=1+I_1+I_2.
\end{align*}
\textbf{Estimate of $I_1$:} Let us write
\begin{equation*}
\begin{aligned}
I_1&=
\int_{\frac{\uptheta r}{4} \leq |y| \leq \frac{1}{s}}\left|u(x+y)\right|s^{d+2}k(sy) \D{y} + \int_{|y| \geq \frac{1}{s}} \left|u(x+y)\right|s^{d+2}k(sy)\D{y}:=I_{11}+I_{12}.
\end{aligned}
\end{equation*}
Simply using change of variable and  $\sup_{\Rd}|u| \leq s^{-(1+\alpha^{\prime})}, $ we obtain
\[I_{12} \leq \int_{|z|\geq 1}k(z) \D{z}.\]
Now we estimate $I_{11}$ using  $|u(x)| \leq s^{-\alpha^{\prime}}(1+|x|)$ for all $x\in \Rd$.
\begin{align*}
I_{11} &\leq \int_{\frac{\uptheta r}{4} \leq |y|\leq \frac{1}{s}} \left(1+|x+y|\right)s^{d+2-\alpha^\prime}k(sy)\D{y}\\
&\leq \frac{5}{4} \int_{\frac{\uptheta r}{4} \leq |y| \leq \frac{1}{s}}s^{d+2-\alpha^\prime}k(sy)\D{y}+ \int_{\frac{\uptheta r}{4} \leq |y| \leq \frac{1}{s}}s^{d+2-\alpha^\prime}|y|k(sy)\D{y}\, .
\end{align*}
 We consider two cases. First consider the case $\alpha^{\prime}=1$ so $\alpha \leq 1$. This implies 
$$
I_{11} \leq\frac{5}{4}\int_{\frac{\uptheta r s}{4} \leq |z|\leq 1}sk(z)\D{z}+\int_{\frac{\uptheta r s}{4} \leq |z|\leq 1} |z|k(z)\D{z} \leq 6 (\uptheta r)^{-1}\int_{\Rd}(1\wedge |z|^\alpha)k(z)\D{z}.
$$
Now consider the case $\alpha^{\prime}=2-\alpha,$ and hence $\alpha>1.$ In this case
\begin{align*}
I_{11} &\leq \frac{5}{4} \int_{\frac{\uptheta r }{4} \leq |y|\leq \frac{1}{s}} s^{\alpha} s^d k(sy)\D{y}+\int_{\frac{\uptheta r }{4} \leq |y|\leq \frac{1}{s}} s^{\alpha-1}|sy| s^d k(sy)\D{y}\\
&= 5 \cdot 4^{\alpha-1} (\uptheta r)^{-\alpha} \int_{\frac{\uptheta r s}{4} \leq |z|\leq 1} \left( \frac{\uptheta r}{4} s\right)^{\alpha}k(z)\D{z}+\left( \frac{\uptheta r}{4} \right)^{1-\alpha} \int_{\frac{\uptheta r s}{4} \leq |z|\leq 1} \left( \frac{\uptheta r}{4} s\right)^{\alpha-1}|z|k(z)\D{z}\\
&\leq C(\uptheta r)^{-2} \int_{\Rd}(1\wedge |z|^{\alpha})k(z)\D{z}\, .
\end{align*}
Combining the estimates of $I_{11}$ and $I_{12},$ we get
\begin{align*}\label{equation3.2}
   I_1 \leq C (\uptheta r)^{-2}\int_{\Rd}(1\wedge |z|^\alpha)k(z)\D{z}. 
\end{align*} 
\textbf{Estimate of $I_2$:} If $\alpha^{\prime}=1,$ then $\alpha \leq 1$ and using  $|u(x_0)| \leq s^{-\alpha^{\prime}}(1+|x_0|)$ we have
\begin{align*}
&I_2:=\int_{\Rd\setminus \sB_{\frac{\uptheta r}{4}} }\left|\left( 1 - \frac{\uptheta }{2} \right)^{- \gamma} u(x_0)\right| s^{d+2}k(sy)\D{y} \leq C\int_{\Rd\setminus \sB_{\frac{\uptheta r}{4}} } s^{d+2-\alpha^{\prime}}k(sy)\D{y}\\
&=C\int_{\Rd\setminus \sB_{\frac{\uptheta r s}{4}} }sk(z)\D{z}
 \leq C\left[\int_{\frac{\uptheta r s}{4}\leq |z|\leq 1}sk(z)\D{z} +\int_{|z|\geq 1} sk(z)\D{z}\right] \\
 &\leq C\left[\frac{4}{\uptheta r}\int_{\frac{\uptheta r s}{4}\leq |z|\leq 1}|z|^{\alpha}k(z)\D{z} + \int_{|z|>1} k(z)\D{z}\right]
\leq  C (\uptheta r)^{-1} \int_{\Rd} (1 \wedge |y|^{\alpha}) k(z)\D{z}.
\end{align*}
 If $\alpha^{\prime}=2-\alpha$ then $\alpha>1.$  In that case, using similar calculation as above we have
\begin{equation*}
\begin{aligned}
&I_2:=\int_{\Rd\setminus \sB_{\frac{\uptheta r}{4}}}\left|\left( 1 - \frac{\uptheta }{2} \right)^{- \gamma} u(x_0)\right| s^{d+2}k(sy)\D{y} \leq C\int_{\Rd\setminus \sB_{\frac{\uptheta r}{4}} } s^{d+\alpha}k(sy)\D{y}\\
&=C\int_{\Rd\setminus \sB_{\frac{\uptheta r s}{4}} }s^{\alpha}k(z)\D{z}
\leq C (\uptheta r)^{-\alpha} \int_{\Rd} (1 \wedge |y|^{\alpha}) k(z)\D{z} .
\end{aligned} 
\end{equation*}
Since $\alpha\in (0,2),$ combining the above estimates we obtain
\[\mathcal{P}^{-}w + \mathcal{P}^{-}_{k,s}w \leq \frac{C}{(\uptheta r)^2}\,\,\,\, \textnormal{in}\,\,\,\, \sB_{\frac{\uptheta r}{4}}(x_0) \, . \]
Now using \cite[Corollary~3.13]{M19} for $w$ we get
\begin{align*}
  &\vert A \cap \sB_{\frac{\uptheta r}{8}} (x_0) \vert 		
 =   \bigg\lvert \bigg\lbrace   w \geq  u(x_0) ((1- \nicefrac{\uptheta}{2})^{ - \gamma}-\nicefrac{1}{2})   \bigg\rbrace  \cap \sB_{\frac{\uptheta r}{8}}(x_0)  \bigg\rvert
 \\
& \qquad \leq C (\uptheta r)^d  \left(\inf_{\sB_{\frac{\uptheta r}{8}}(x_0)}w + \frac{\uptheta r}{8} \cdot \frac{C}{(\uptheta r)^2} \right)^{\varepsilon}
   \cdot \left[ u(x_0) ((1- \nicefrac{\uptheta}{2})^{ - \gamma}-\nicefrac{1}{2})  \right]^{- \varepsilon}
  \\
 & \qquad \leq C (\uptheta r)^d  \left( \left((1-\frac{\uptheta}{2})^{-\gamma}- 1 \right) u(x_0) + \frac{\uptheta r}{8} \cdot \frac{C}{(\uptheta r)^2} \right)^{\varepsilon}
   \cdot \left[ u(x_0) ((1- \nicefrac{\uptheta}{2})^{ - \gamma}-\nicefrac{1}{2})  \right]^{- \varepsilon}
  \\
  &\quad \quad \leq C(\uptheta r)^d \Big[\left((1-\frac{\uptheta}{2})^{-\gamma}- 1 \right)+\frac{C}{8} (\uptheta r)^{-1}t_0^{-1} \Big]^{\varepsilon}\\
  & \quad \quad \leq C (\uptheta r)^d \left( \left((1- \nicefrac{\uptheta}{2})^{ - \gamma}-1 \right)^{\varepsilon} + C_0 (\uptheta r)^{-\varepsilon} t_0^{-\varepsilon}  \right)\,.
\end{align*}
Now let us choose $\uptheta > 0$ small enough (independent of $t_0$) to satisfy
$$
C (\uptheta r)^d  \left((1- \nicefrac{\uptheta}{2})^{ - \gamma}-1 \right)^{\varepsilon} \leq\;
 \frac{1}{4} \vert \sB_{\frac{\uptheta r}{8}}(x_0) \vert\,.
$$
With this choice of $\uptheta$ if $t_0$ becomes large,  then we  also have
$$
C (\uptheta r)^{d- \varepsilon}  t_0^{- \varepsilon} \leq \frac{1}{4} \vert \sB_{\frac{\uptheta r}{8}}(x_0) \vert\,,
$$
and hence
$$
\vert A \cap \sB_{\frac{\uptheta r}{8}} (x_0) \vert\leq \frac{1}{2} \vert \sB_{\frac{\uptheta r}{8}}(x_0) \vert\,. 
$$
This estimate of course implies that 
$$
\vert \tilde{A} \cap \sB_{\frac{\uptheta r}{8}} (x_0) \vert \geq C_2 \vert \sB_r \vert,
$$
but this is contradicting \eqref{ET5.1A0}. Therefore $t_0$ cannot be large and this completes the proof.
\end{proof}
\begin{corollary}\label{Col3.1}
Let $u$ satisfies the conditions of \cref{T5.1}, then the following holds.
\begin{equation*}
\sup_{\sB_{\frac{1}{4}}} u \leq C \left(\inf_{\sB_{\frac{1}{4}}} u + (M_0\vee C_0)s^{1+\alpha^{\prime}}\right)  .
\end{equation*}
\end{corollary}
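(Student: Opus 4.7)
The plan is to deduce the corollary from \cref{T5.1} by applying it to translated and rescaled copies of $u$, combined with a very short Harnack chain across $\sB_{1/4}$. To begin, I would fix $x^{*}\in \overline{\sB_{1/4}}$ realising $\inf_{\sB_{1/4}} u$ and, for an arbitrary $x_0\in \sB_{1/4}$, introduce the rescaled function $w_{x_0}(z):=u(x_0+\rho z)$ with $\rho:=7/8$. Because $|x_0|\le 1/4$, the image $x_0+\rho z$ stays in $\sB_2$ for every $z\in\sB_2$, so the equation for $u$ transports to $w_{x_0}$ on $\sB_2$ after a change of variables.

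The crucial computation is the transformation identity for the nonlocal Pucci operators: with $\tilde s:=\rho s$, the substitution $\eta=\rho y$ in the integrals defining $\cP^{\pm}_{k,s}$ gives
\[
\cP^{\pm}(w_{x_0})(z)+\cP^{\pm}_{k,\tilde s}(w_{x_0})(z) \;=\; \rho^2 \bigl[\cP^{\pm} u+\cP^{\pm}_{k,s} u\bigr](x_0+\rho z), \qquad z\in\sB_2,
\]
where the coupling $\tilde s=\rho s$ is forced so that the cutoff $\Ind_{\sB_{1/\tilde s}}(y)$ becomes $\Ind_{\sB_{1/s}}(\eta)$ and the dilated kernel matches $k(s\eta)$. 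Using $\rho^2 s^{1+\alpha'}=\rho^{1-\alpha'}\tilde s^{1+\alpha'}\le \tilde s^{1+\alpha'}$ and checking the transformed $L^{\infty}$ and linear growth bounds --- roughly, $|w_{x_0}(z)|\le \tfrac{5}{4} M_0\, s(1+|z|)\le 2 M_0\,\tilde s(1+|z|)$ --- puts $w_{x_0}$ in the regime of \cref{T5.1} with $(M_0,s)$ replaced by $(2M_0,\tilde s)$. Since $\tilde s=7s/8\in (0,1]$, the theorem applies and, after undoing the rescaling, yields
\[
u(x) \;\le\; C_1 \bigl(u(x_0)+(M_0\vee C_0)\, s^{1+\alpha'}\bigr) \qquad \text{for every } x_0\in\sB_{1/4},\ x\in \sB_{7/16}(x_0).
\]

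The final step chains this local estimate between $x^*$ and an arbitrary $x\in\sB_{1/4}$. Since $|x-x^*|\le \tfrac{1}{2}$, either one application of the estimate at $x_0=x^*$ already reaches $x$ (when $|x-x^*|\le 7/16$), or the midpoint $\tilde x:=(x+x^*)/2\in \sB_{1/4}$ lies within distance $|x-x^*|/2\le 1/4<7/16$ of both $x^*$ and $x$, so two consecutive applications (first at $x_0=x^*$ evaluated at $\tilde x$, then at $x_0=\tilde x$ evaluated at $x$) close the gap and give $u(x)\le C_1^2 u(x^*)+(C_1+C_1^2)(M_0\vee C_0)\, s^{1+\alpha'}$. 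Taking the supremum over $x\in\sB_{1/4}$ and using $u(x^*)=\inf_{\sB_{1/4}}u$ would finish the proof.

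The main obstacle is the rescaling computation for $\cP^{\pm}_{k,\tilde s}(w_{x_0})$: one must pin down the exact coupling $\tilde s=\rho s$ so that both the indicator cutoff and the dilated kernel transform compatibly, and then verify that the linear growth hypothesis of \cref{T5.1} survives with a universal constant. Once this local estimate is in hand, the chain argument is purely geometric, and the length of the chain is two simply because $\diam(\sB_{1/4})=\tfrac{1}{2}$ is strictly less than twice the effective interior radius $7/16$ provided by \cref{T5.1}.
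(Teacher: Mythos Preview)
Your proposal is correct, but it is more elaborate than the paper's argument. The paper simply picks $x_0\in\overline{\sB_{1/4}}$ with $u(x_0)=\inf_{\sB_{1/4}}u$, sets $\tilde u(x):=u(x+x_0)$, and applies \cref{T5.1} directly to $\tilde u$: since $\sB_{1/4}\subset\sB_{1/2}(x_0)$, the conclusion $\tilde u\le C(\tilde u(0)+(M_0\vee C_0)s^{1+\alpha'})$ on $\sB_{1/2}$ gives the corollary in one step, with no rescaling and no chain. Your extra rescaling by $\rho=7/8$ is what you use to make the translated equation hold on \emph{exactly} $\sB_2$; this is a legitimate concern (the paper is tacitly using that the proof of \cref{T5.1} works with the slightly smaller domain $\sB_{7/4}$ and with $M_0$ replaced by $\tfrac{5}{4}M_0$), and your route resolves it cleanly. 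The price you pay is that the effective radius shrinks to $7/16<1/2$, which then forces the two-step Harnack chain. So: your argument buys rigour about the domain, the paper's buys brevity; both lead to the same conclusion with universal constants.
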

\begin{proof}
Take any point $x_0 \in \overline{\sB_{\frac{1}{4}}}$ such that $u(x_0)=\inf_{\sB_{\frac{1}{4}}}u(x).$  Clearly $\sB_{\frac{1}{4}} \subset \sB_{\frac{1}{2}}(x_0).$ Defining  $\tilde{u}(x):=u(x+x_0)$ and applying \cref{T5.1} on $\tilde{u}$ we find
\[\tilde{u}(x) \leq C\left(\tilde{u}(0)+(M_0\vee C_0)s^{1+\alpha^{\prime}}\right)\,\,\, \text{in}\,\,\,\, \sB_{\frac{1}{2}}. \]
This implies
\[\sup_{\sB_{\frac{1}{4}}}u(x) \leq \sup_{ \sB_{\frac{1}{2}}(x_0)} u(x) \leq C\left(\inf_{\sB_{\frac{1}{4}}}u(x)+(M_0 \vee C_0)s^{1+\alpha^{\prime}}\right).\] This proves the claim.
\end{proof}
Now we shall prove some auxiliary lemmas which will be used to construct appropriate supersolutions that are crucial to prove the oscillation estimate. Let us denote $$\Omega_{\rho}:=\left\{x \in \Omega | \dist (x, \Omega^c) <\rho\right\}.$$ It is well known that for any $C^2$ domain, $\delta \in C^2(\Omega_{\rho}).$ Moreover, up to a suitable modification inside $\Omega$, we can extend $\delta$ to belong to  $C^2({\overline{\Omega}}).$
\begin{lemma}\label{Lem3.2}
Let $\Omega$ be a bounded $C^2$ domain in $\Rd$, 
then for any $0<\upepsilon<1,$ we have the following estimate
\begin{equation}\label{delta estimate}
\left|\cI_{\theta \nu}(\delta^{1+\upepsilon})\right| \leq C\left(1+ \Ind_{(1,2)}(\alpha)\delta^{1-\alpha}\right)\,\,\,\, \text{in}\,\,\,\, \Omega,
\end{equation}
where $C>0$ depends only on $ d, \Omega $ and $ \int_{\Rd} (1 \wedge |y|^{\alpha}) k(y) \D{y} .$
\end{lemma}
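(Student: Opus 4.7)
The plan is to exploit the fact that, because $\Omega$ is $C^2$, the distance function $\delta$ is $C^2$ on a tubular neighborhood $\{\delta<r_0\}$ of $\partial\Omega$, with $|D\delta|=1$ and $|D^2\delta|\leq C$ there. Writing $u=\delta^{1+\upepsilon}$, a direct differentiation gives the two pointwise bounds $|Du(x)|\leq C\delta(x)^{\upepsilon}\leq C$ and $|D^2u(x)|\leq C\delta(x)^{\upepsilon-1}$ wherever $\delta$ is $C^2$, together with the global estimates $\|u\|_\infty\leq C$ and the Lipschitz bound $|u(y)-u(z)|\leq L|y-z|$ on $\Rd$. These four controls will drive everything. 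I will carry out the estimate assuming $d:=\delta(x)\leq r_0/2$; outside this collar the claimed bound reduces to a constant and can be obtained after replacing $u$ by a smooth representative which agrees with $\delta^{1+\upepsilon}$ in $\{\delta\leq r_0/2\}$ (this is needed in any case to make sense of $\cI_{\theta\nu}[x,u]$ on the cut locus of $\delta$).

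Fixing $x\in\Omega$ with $d\leq r_0/2$, I split
\[
\cI_{\theta\nu}[x,u]=I_1+I_2+I_3,
\]
where $I_1,I_2,I_3$ are the integrals over $\{|y|<d/2\}$, $\{d/2\leq|y|\leq 1\}$, and $\{|y|>1\}$, respectively. For $I_1$ the ball $\sB_{d/2}(x)$ lies in $\{\delta\geq d/2\}\cap\{\delta<r_0\}$, so $u$ is $C^2$ there with $\|D^2u\|_\infty\leq Cd^{\upepsilon-1}$. A second-order Taylor expansion yields $|u(x+y)-u(x)-Du(x)\cdot y|\leq Cd^{\upepsilon-1}|y|^2$, and factoring $|y|^2\leq(d/2)^{2-\alpha}|y|^\alpha$ and using $N_{\theta\nu}\leq k$ and $\int(1\wedge|y|^\alpha)k\,\D y<\infty$ gives $|I_1|\leq Cd^{1+\upepsilon-\alpha}$.

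For $I_2$ I bound the integrand by $|u(x+y)-u(x)|+|Du(x)||y|\leq 2L|y|$. When $\alpha\leq 1$, one has $|y|\leq|y|^\alpha$ on $|y|\leq 1$, so $|I_2|\leq C$. When $\alpha\in(1,2)$, I factor $|y|=|y|^{1-\alpha}|y|^\alpha$ and use $|y|^{1-\alpha}\leq(d/2)^{1-\alpha}$ on the annulus to conclude $|I_2|\leq Cd^{1-\alpha}$. For $I_3$, the bound $|u(x+y)-u(x)|\leq 2\|u\|_\infty$ combined with $\int_{|y|>1}k\,\D y<\infty$ gives $|I_3|\leq C$. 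Summing: for $\alpha\leq 1$, the exponent $1+\upepsilon-\alpha>0$ so $d^{1+\upepsilon-\alpha}$ is bounded on $\Omega$, and the total is uniformly bounded; for $\alpha\in(1,2)$, I write $d^{1+\upepsilon-\alpha}=d^\upepsilon\,d^{1-\alpha}\leq Cd^{1-\alpha}$ and obtain the announced bound $C(1+\delta^{1-\alpha})$.

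The one delicate point, and the place where I would need to be careful, is the extension of the estimate to points $x$ outside the tubular neighborhood, where $\delta$ may fail to be $C^2$ because of the cut locus. Since the singular term $\delta^{1-\alpha}$ is only relevant near the boundary and the bound in the interior is just a constant, the cleanest route is to replace $\delta^{1+\upepsilon}$ inside $\{\delta\geq r_0/2\}$ by a smooth function with values in a fixed compact set bounded away from $0$ (keeping it unchanged in the collar $\{\delta\leq r_0/4\}$); the three-region split above then applies verbatim, and the difference $\cI_{\theta\nu}$ of the modification is uniformly bounded because the modification is smooth, bounded, and Lipschitz. This yields the estimate on all of $\Omega$.
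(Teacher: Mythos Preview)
Your argument is correct and follows essentially the same route as the paper: the same three-region split $\{|y|<\delta(x)/2\}$, $\{\delta(x)/2\le|y|\le 1\}$, $\{|y|>1\}$, with the Taylor/$C^2$ bound on the inner ball, the Lipschitz bound plus the factorization $|y|=|y|^{1-\alpha}|y|^\alpha$ on the annulus, and boundedness on the exterior. Your extra care about the cut locus (replacing $\delta$ by a smooth modification away from the boundary) is a legitimate point that the paper sidesteps by simply declaring $\delta\in C^2(\bar\Omega)$; otherwise the two proofs coincide.
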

\begin{proof} 
By definition, we have
\begin{align*}
    \left| \mathcal{I}_{\theta \nu}(\delta^{1+\upepsilon})(x) \right|&\leq \int_{\Rd} \left| \delta^{1+\upepsilon}(x+y)-\delta^{1+\upepsilon}(x) -\Ind_{\sB_1}(y) y \cdot \grad \delta^{1+\upepsilon}(x)\right|k(y)\,dy.
\end{align*}
Since $\delta\in C^{0,1}(\Rd)\cap C^2(\overline{\Omega})$, using the Lipschtiz continuity of $\delta^{1+\upepsilon}$ for $|y|\leq 1$ and boundedness of $\delta^{1+\upepsilon}$ for $|y|>1$ in the right hand side of above inequality, we obtain the estimate \eqref{delta estimate} for $\alpha \in (0,1].$ 
Next consider the case $\alpha \in (1,2).$ For any $x \in \Omega$ as above we have
\begin{equation*}
\begin{aligned}
\left|\cI_{\theta \nu}(\delta^{1+\upepsilon})(x)\right| \leq &\int_{\Rd} \left|\delta^{1+\upepsilon}(x+y)-\delta^{1+\upepsilon}(x) -\Ind_{\sB_1}(y) y \cdot \grad \delta^{1+\upepsilon}(x)\right| k(y)\D{y}\\
&=\int_{|y|< \frac{\delta(x)}{2}}+\int_{\frac{\delta(x)}{2}\leq |y|\leq 1 }+ \int_{|y|> 1}:=I_1+I_2+I_3 \, .
\end{aligned}
\end{equation*}
Since $|y| \leq \frac{\delta(x)}{2}$ and $\delta(x)<1,$ we have the following estimate on $I_1.$
\begin{align*}
\left|\delta^{1+\upepsilon}(x+y)-\delta^{1+\upepsilon}(x) -\Ind_{\sB_1}(y) y \cdot \grad \delta^{1+\upepsilon}(x)\right| &\leq ||\delta^{1+\upepsilon}||_{C^2(\sB_{\frac{\delta(x)}{2}}(x))}|y|^2 \\
\leq 4C \frac{||\delta||_{C^2(\overline{\Omega})}}{\delta(x)^{1-\upepsilon}}|y|^2 &\leq 4C \frac{||\delta||_{C^2(\overline{\Omega})} \delta(x)^{2-\alpha}}{\delta(x)^{1-\upepsilon}}|y|^{\alpha}.
\end{align*}
This implies 
\begin{align}\label{equ3.11}
I_1 \leq 4C ||\delta||_{C^2(\overline{\Omega})}\delta(x)^{1+\upepsilon-\alpha} \int_{\Rd} |y|^{\alpha}k(y)\D{y} \leq 4C_0C||\delta||_{C^2(\overline{\Omega})}\delta(x)^{1+\upepsilon-\alpha}.
\end{align}
Again for $I_2$ we have 
\begin{align*}
I_2 \leq C \int_{\frac{\delta(x)}{2} \leq |y| \leq 1}|y|k(y)\D{y} &\leq \left(\frac{C \delta(x)}{2} \right)^{1-\alpha}\int_{\frac{\delta(x)}{2} \leq |y| \leq 1}|y|^{\alpha}k(y)\D{y} \nonumber \\
&\leq \left(\frac{C \delta(x)}{2} \right)^{1-\alpha} \int_{\Rd} \left(1\wedge|y|^{\alpha}\right)k(y)\D{y}.
\end{align*}
Finally,
\begin{align}\label{equ3.13}
I_3=\int_{|y| >1} |\delta^{1+\upepsilon}(x+y)-\delta^{1+\upepsilon}(x)|k(y)\D{y}
\leq 2 (\diam \Omega)^{1+\upepsilon}
\int_{\Rd} (1 \wedge |y|^{\alpha}) k(y)\D{y}.
\end{align}
Combining \eqref{equ3.11}-\eqref{equ3.13} we obtain \eqref{delta estimate}, completing the proof.
\end{proof}
Next we obtain a suitable lower bound for minimal Pucci operator $\cP^-$ applied on $\delta^{1+\upepsilon}.$
\begin{lemma}\label{Lem3.3}
Let $\Omega$ be a bounded $C^2$ domain in $\Rd$, 
then for any $0<\upepsilon<1,$ we have the following estimate
\begin{align*}
\mathcal{P}^{-}\left(\delta^{1+\upepsilon}\right) \geq C_1 \cdot \upepsilon \delta^{\upepsilon-1}-C_2\,\,\,\,\text{in}\,\,\,\, \Omega ,
\end{align*}
where $C_1 , C_2 $ depends only on $d,\Omega , \lambda, \Lambda $.
\end{lemma}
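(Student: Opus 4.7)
The plan is to compute $\cP^{-}(\delta^{1+\upepsilon})$ explicitly using the $C^{2}$ regularity of the distance function in a tubular neighborhood of $\partial\Omega$, and to identify the singular term $\delta^{\upepsilon-1}$ as a genuine positive eigenvalue of the Hessian. Since $\Omega$ is a bounded $C^{2}$ domain, there exist $\rho_{0}>0$ and $M=M(\Omega)$ such that $\delta\in C^{2}(U_{\rho_{0}})$ on the tube $U_{\rho_{0}}:=\{x\in\Omega:\delta(x)<\rho_{0}\}$, with $|D\delta|\equiv 1$ and $\|D^{2}\delta\|_{L^{\infty}(U_{\rho_{0}})}\leq M$. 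The crucial identity is $D^{2}\delta\cdot D\delta=0$, which follows by differentiating $|D\delta|^{2}=1$; thus in an orthonormal frame with $D\delta$ as one of the basis vectors, $D^{2}\delta$ is block-diagonal with a zero in the normal direction and eigenvalues $\kappa_{1},\ldots,\kappa_{d-1}$ satisfying $|\kappa_{i}|\leq M$ in the tangent directions.

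Differentiating twice in $U_{\rho_{0}}$ gives
\[
D^{2}(\delta^{1+\upepsilon}) \,=\, (1+\upepsilon)\upepsilon\,\delta^{\upepsilon-1}\,D\delta\otimes D\delta \,+\, (1+\upepsilon)\,\delta^{\upepsilon}\,D^{2}\delta,
\]
so in the frame above the Hessian is diagonal with a positive eigenvalue $e_{0}=(1+\upepsilon)\upepsilon\,\delta^{\upepsilon-1}$ in the normal direction $D\delta$ and with tangential eigenvalues $e_{i}=(1+\upepsilon)\,\delta^{\upepsilon}\,\kappa_{i}$ whose absolute value is bounded by $(1+\upepsilon)(\diam\Omega)^{\upepsilon}M$. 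The identity $\cP^{-}(A)=\lambda\sum_{\mu_{i}>0}\mu_{i}+\Lambda\sum_{\mu_{i}<0}\mu_{i}$ (where $\mu_{i}$ are the eigenvalues of $A$) then yields
\[
\cP^{-}(\delta^{1+\upepsilon}) \,\geq\, \lambda(1+\upepsilon)\upepsilon\,\delta^{\upepsilon-1} - \Lambda(d-1)(1+\upepsilon)(\diam\Omega)^{\upepsilon}M \,\geq\, \lambda\,\upepsilon\,\delta^{\upepsilon-1} - C_{2},
\]
which is the desired estimate inside the boundary layer.

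On the complement $\Omega\setminus U_{\rho_{0}}$ the bound is trivial because $\delta^{\upepsilon-1}\leq\rho_{0}^{\upepsilon-1}$ is bounded; to make the Pucci computation meaningful despite the possible failure of $C^{2}$ regularity of $\delta$ away from $\partial\Omega$, I replace $\delta$ by a globally $C^{2}$ function $\tilde{\delta}$ that agrees with $\delta$ on $U_{\rho_{0}/2}$ and is bounded below by $\rho_{0}/2$ with bounded Hessian elsewhere, so that both the singular term and the interior error are controlled and absorbed into $C_{2}$. The only delicate point of the argument is the identity $D^{2}\delta\cdot D\delta=0$: it guarantees that the singular contribution $(1+\upepsilon)\upepsilon\,\delta^{\upepsilon-1}$ appears as a clean eigenvalue of $D^{2}(\delta^{1+\upepsilon})$, uncontaminated by the tangential bounded part of $D^{2}\delta$, which is precisely what allows the Pucci operator to keep the positive weight $\lambda$ on this singular eigenvalue and produce the claimed lower bound.
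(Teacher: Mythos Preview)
Your proof is correct and uses the same Hessian decomposition as the paper, namely
\[
D^{2}(\delta^{1+\upepsilon}) \;=\; \underbrace{(1+\upepsilon)\upepsilon\,\delta^{\upepsilon-1}\,D\delta\otimes D\delta}_{=:B}\;+\;\underbrace{(1+\upepsilon)\,\delta^{\upepsilon}\,D^{2}\delta}_{=:A},
\]
with the singular rank-one piece $B$ producing the term $C_{1}\upepsilon\,\delta^{\upepsilon-1}$ and the bounded piece $A$ absorbed into $C_{2}$. The technical route differs slightly. You exploit the eikonal identity $D^{2}\delta\cdot D\delta=0$ to simultaneously diagonalize $A$ and $B$, read off all eigenvalues of $D^{2}(\delta^{1+\upepsilon})$ explicitly, and then apply the formula $\cP^{-}(M)=\lambda\sum_{\mu_i>0}\mu_i+\Lambda\sum_{\mu_i<0}\mu_i$. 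The paper instead uses the superadditivity $\cP^{-}(A+B)\geq\cP^{-}(A)+\cP^{-}(B)$, bounds $\cP^{-}(B)\geq\lambda(1+\upepsilon)\upepsilon\,\delta^{\upepsilon-1}|D\delta|^{2}$ directly from $B\geq 0$, and controls $\cP^{-}(A)$ via $\|A\|$. Your argument gives a sharper geometric picture of the Hessian but requires $|D\delta|\equiv 1$, which forces the tubular-neighborhood splitting and the regularized $\tilde\delta$ in the interior; the paper's superadditivity argument is more elementary and does not need the orthogonality, working directly with any globally $C^{2}$ extension of the distance (it simply writes $\|\delta\|_{C^{2}(\bar\Omega)}$, implicitly using such an extension). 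Both approaches yield the stated bound.
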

\begin{proof}
Since $\delta\in C^2(\overline{\Omega}),$ for any $x\in \Omega,$ we can classically compute the following.
\begin{align*}
\frac{\partial ^2}{\partial x_i \partial x_j} \delta^{1+\upepsilon}(x)
=(1+\upepsilon)\Big[\delta^{\upepsilon}(x)\frac{\partial ^2}{\partial x_i \partial x_j} \delta(x)+ \upepsilon \delta^{\upepsilon -1}(x) \frac{\partial \delta(x)}{\partial x_i}\cdot\frac{\partial \delta(x)}{\partial x_j}\Big]:=A+B
\end{align*}
where $A, B$ are two $d \times d$ matrices given by $$A:= (a_{i,j})_{1\leq i,j \leq d}=(1+\upepsilon)\delta^{\upepsilon}(x)\frac{\partial ^2}{\partial x_i \partial x_j} \delta(x)$$ and $$B:= (b_{i,j})_{1\leq i,j \leq d}=(1+\upepsilon)\upepsilon \delta^{\upepsilon -1}(x) \frac{\partial \delta(x)}{\partial x_i}\cdot\frac{\partial \delta(x)}{\partial x_j}.$$
Note that $B$ is a positive definite matrix and $||A|| \leq d^2 (1+\upepsilon) (\diam \Omega)^{\upepsilon}||\delta||_{C^2(\overline{\Omega})}.$ Therefore we have
\begin{align*}
\cP^{-}\delta^{1+\upepsilon}(x) &\df \inf \left\lbrace \trace \left( N D^2\delta^{1+\upepsilon}(x)\right) , N \in  M^d ,  \lambda \Id \leq N \leq \Lambda \Id  \right\rbrace\\
&\df \inf \left\lbrace \trace \left( N (A+B)\right) , N \in  M^d ,  \lambda \Id \leq N \leq \Lambda \Id  \right\rbrace\\
&\geq \inf \left\lbrace \trace \left( N B\right) , N \in  M^d ,  \lambda \Id \leq N \leq \Lambda \Id  \right\rbrace+\inf \left\lbrace \trace \left( N A\right) , N \in  M^d ,  \lambda \Id \leq N \leq \Lambda \Id  \right\rbrace\\
&\geq \upepsilon(1+\epsilon) \delta^{\upepsilon-1}(x) \lambda |D\delta(x)|^2-d^2 \Lambda (1+\upepsilon) (\diam \Omega)^{\upepsilon}||\delta||_{C^2(\overline{\Omega})}\\
&\geq C_1 \cdot \upepsilon \delta^{\upepsilon-1}(x)-C_2.
\end{align*} This completes the proof.
\end{proof}
Next, we obtain an estimate on $\cL \delta$ in $\Omega.$
\begin{lemma}\label{Lem3.4}
Let $\Omega$ be a bounded $C^{2}$ domain in $\Rd$. Then we have the following estimate
\begin{align}\label{equ4.10}
   |\cL\delta|\leq C(1+\Ind_{(1,2)} \delta^{1-\alpha}) \,\,\, in \,\,\, \Omega,
\end{align}
where constant $C$ depends only on $d, \Omega, \lambda, \Lambda$ and $ \int_{\Rd} (1\wedge|y|^\alpha) k(y) \D{y}.$
\end{lemma}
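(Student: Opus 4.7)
The plan is to bound the local and nonlocal contributions to $\cL\delta$ separately and uniformly in $(\theta,\nu)$, so that the estimate survives the sup-inf in \eqref{eq1.1}. The argument essentially mirrors \cref{Lem3.2} in the formal limit $\upepsilon\downarrow 0$: for $\delta$ itself there is no $\delta^{\upepsilon-1}$ blow-up in the second derivative, so the inner Taylor term is in fact better behaved than in \cref{Lem3.2}, and all the singular behavior stems from the intermediate annulus when $\alpha\in(1,2)$.

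For the local part, the $C^2$-regularity of $\partial\Omega$ provides a tubular neighborhood $\{0<\delta<r_0\}$, with $r_0$ depending only on $\Omega$, on which $\delta\in C^2$ with $\|D^2\delta\|_{L^\infty}$ bounded by a constant depending only on $\Omega$. The ellipticity bound $\lambda\Id\le a_{\theta\nu}\le\Lambda\Id$ from \cref{Assmp-1}\,(a) then gives $|\trace a_{\theta\nu}(x)D^2\delta(x)|\le d\Lambda\|D^2\delta\|_{L^\infty}$ throughout this tubular neighborhood. On the complement $\{\delta\ge r_0\}$, I would replace $\delta$ by a fixed $C^2(\bar\Omega)$ extension agreeing with $\delta$ near $\partial\Omega$; the estimate is trivial there since the right-hand side of \eqref{equ4.10} is bounded below by a positive constant.

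For the nonlocal part I would treat the two ranges of $\alpha$ separately. When $\alpha\le 1$, the Lipschitz bound $|\delta(x+y)-\delta(x)|\le|y|$ together with $|D\delta|\le 1$ yields $|\delta(x+y)-\delta(x)-\Ind_{\sB_1}(y)\,D\delta(x)\cdot y|\le 2|y|\le 2|y|^\alpha$ on $\{|y|\le 1\}$, while on $\{|y|>1\}$ the same quantity is bounded by $2\diam(\Omega)$. Using $N_{\theta\nu}(x,y)\le k(y)$ from \cref{Assmp-1}\,(b), this gives $|\cI_{\theta\nu}[x,\delta]|\le C\int_{\Rd}(1\wedge|y|^\alpha)k(y)\D{y}$, independently of $x$. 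When $\alpha\in(1,2)$, I split the integration into the three regions $\{|y|<\delta(x)/2\}$, $\{\delta(x)/2\le|y|\le 1\}$, and $\{|y|>1\}$, exactly as in the $I_1,I_2,I_3$ decomposition in the proof of \cref{Lem3.2}. On the inner ball, $\sB_{\delta(x)/2}(x)$ sits inside the $C^2$ tubular neighborhood (for $\delta(x)<r_0/2$), and a second-order Taylor expansion yields $|\delta(x+y)-\delta(x)-D\delta(x)\cdot y|\le C|y|^2\le C\delta(x)^{2-\alpha}|y|^\alpha$, giving a bounded contribution since $2-\alpha>0$. On the middle annulus, Lipschitz estimates combined with $|y|\le(\delta(x)/2)^{1-\alpha}|y|^\alpha$ (valid since $|y|\ge\delta(x)/2$ and $1-\alpha<0$) produce the singular term of order $\delta(x)^{1-\alpha}$. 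On the outer tail, boundedness of $\delta$ on $\Omega$ gives a constant.

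The only technical wrinkle is that $\delta$ need not be globally $C^2$ inside $\Omega$, since it may be non-smooth along the cut locus. This is bypassed by confining the Taylor argument to $\{\delta<r_0/2\}$, where $\sB_{\delta(x)/2}(x)$ lies in the $C^2$ tubular neighborhood, and using a smooth regularization on the complement $\{\delta\ge r_0/2\}$, where the RHS of \eqref{equ4.10} reduces to a constant. Assembling the local and nonlocal bounds uniformly in $(\theta,\nu)$ and passing to the sup-inf completes the proof.
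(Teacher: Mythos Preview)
Your proposal is correct and follows essentially the same route as the paper: split $\cL\delta$ into its local and nonlocal parts, bound the local part via the uniform ellipticity and the $C^2$ bound on $\delta$ near $\partial\Omega$, and for the nonlocal part use the Lipschitz/boundedness argument when $\alpha\le 1$ and the three-region decomposition $\{|y|<\delta(x)/2\}\cup\{\delta(x)/2\le|y|\le 1\}\cup\{|y|>1\}$ when $\alpha\in(1,2)$, exactly as in \cref{Lem3.2}. The only difference is cosmetic: the paper simply asserts $\delta\in C^2(\bar\Omega)$ (citing \cite{DZ11}) and does not discuss the cut locus, whereas you explicitly confine the Taylor step to the tubular neighborhood and regularize on $\{\delta\ge r_0/2\}$---a cleaner justification of the same estimate.
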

\begin{proof}
First of all,  for all $x \in \Omega$ we have
\begin{align}\label{EL2.6C}
|\cL\delta(x)|  \leq \sup_{\theta,\nu} | \trace (a_{\theta \nu}  (x) D^2 \delta(x))|+ \sup_{\theta,\nu}|\cI_{\theta\nu} \delta(x)|
 \leq \kappa + \sup_{\theta,\nu} |\cI_{\theta\nu} \delta(x)|,
\end{align}
for some constant $\kappa$, depending on $\Omega$ and uniform bound of $a_{\theta \nu }$. 
For $\alpha \in (0,1],$ \eqref{equ4.10} \textcolor{black}{it} follows from the same arguments of \cref{Lem3.2}. For $\alpha\in (1,2),$ it is enough to obtain the estimate \eqref{equ4.10} for all $x \in \Omega$ such that $\delta(x) < 1$.
We follow the similar calculation as in \cref{Lem3.2} and get
\begin{align*}
\vert \cI_{\theta \nu} \delta(x) \vert &\leq\int_{\Rd} \vert \delta(x+y)-\delta(x) -\Ind_{B_1}(y) y \cdot \grad \delta(x)  \vert k(y)\D{y}
\\
&= \int_{|y|\leq\frac{\delta(x)}{2}} +\int_{\frac{\delta(x)}{2}<|y|<1}+\int_{|y|>1}  
\end{align*}
and
\begin{align*}
|I_{\theta \nu} \delta(x)| \leq \kappa_1 \int_{\Rd }(1\wedge |y|^\alpha) k(y) \D{y} \delta^{1-\alpha}(x)
\end{align*}
for some constant $\kappa_1$. Inserting these estimates in
\eqref{EL2.6C} we obtain
$$|\cL\delta(x)|\leq \kappa_2  \delta^{1-\alpha}(x)$$
for some constant $\kappa_2$ and 
\eqref{equ4.10} follows.
\end{proof}
Let us now define the sets near the boundary $\partial \Omega$ that we use for our oscillation estimates. We borrow the notation from \cite{RS14}.

\begin{definition}\label{D2.1}
Let $ \kappa\in(0, \frac{1}{16})$ be a fixed small constant, and let $ \kappa^{\prime} = 1/2 + 2\kappa$.
Given a point $x_0 \in \partial \Omega$  and $R>0$, we define
$$
\sD_R = \sD_R(x_0) = \sB_R(x_0) \cap \Omega,
$$
and 
$$
\sD^{+}_{\kappa^{\prime} R} = \sD^{+}_{\kappa^{\prime} R}(x_0) = \sB_{\kappa^{\prime} R} (x_0) \cap \left\lbrace  x \in \Omega : (x-x_0)  \cdot  {\rm n}(x_0) \geq 2\kappa R  \right\rbrace,
$$
where ${\rm n} (x_0)$ is the unit inward normal at $x_0$. For any bounded $C^{1,1}$-domain, we know that there exists $\rho>0$,  depending on $\Omega$,  such that the following inclusions hold for each $x_0 \in \partial \Omega$ and $R \leq \rho$:
\begin{equation}\label{E2.18}
\sB_{\kappa R}(y) \subset \sD_{R}(x_0) \qquad \text{ for all} \; y \in \sD^{+}_{\kappa^{\prime} R}(x_0),
\end{equation}
and 
\begin{equation}\label{E2.19}
\sB_{4\kappa R}(y^{\ast} + 4\kappa R {\rm n}(y^{\ast})) \subset \sD_R(x_0), \quad \text{and} \quad  
\sB_{\kappa R}(y^{\ast} + 4\kappa R {\rm n}(y^{\ast})) \subset \sD^{+}_{\kappa^{\prime} R}(x_0)
\end{equation}
for all $y\in \sD_{R/2}$, where $y^{\ast} \in \partial \Omega$ is the unique boundary point satisfying $|y-y^{\ast}| = \dist(y, \partial \Omega)$. Note that,  since $R\leq \rho$,  $y \in \sD_{R/2}$  is close enough to $\partial \Omega$ and hence the point 
$y^{\ast} + 4 \kappa R\, {\rm n}(y^{\ast})$ belongs to the line joining $y$ 
and $y^{\ast}$.
\end{definition}
\begin{remark}\label{R2.2}
In the remaining part of this section, we fix $\rho >0$ to be a small constant depending only on $\Omega$, so that \eqref{E2.18}-\eqref{E2.19} hold whenever $R \leq \rho$ and $x_0 \in \partial	\Omega$.
Also, every point on $\partial \Omega$ can be touched from both inside and outside $\Omega$ by balls of radius $\rho$. We
also fix $\upsigma \in (0, \upgamma)$ for small enough $\upgamma$ so that
for $0<r\leq \rho$  and $x_0 \in \partial \Omega$ we have 
$$
\sB_{\eta r}(x_0) \cap \Omega \subset \sB_{(1+\sigma)r} (z) \setminus \bar{\sB}_{r}(z)\quad \text{for}\quad \eta=\sigma/8,
$$
for any $x^{\prime} \in \partial \Omega \cap \sB_{\eta r} (x_0)$, where
 $\sB_r (z) $ is a ball contained in $\Rd \setminus \Omega$ that touches $\partial \Omega$ at point $x^{\prime}$. 
\end{remark}
In the following lemma, using \cref{Lem3.2} and \cref{Lem3.3} we construct supersolutions. 
\begin{lemma}\label{Lem3.5}
Let $\Omega$ be a bounded $C^2$ domain in $\Rd$ and $\alpha \in (1,2),$ then there exist $\rho_1 >0$ and a $C^2$ function  $\phi_1 $ satisfying 
$$
\begin{cases}
\cP^{+} \phi_1(x) + \cP^{+}_k \phi_{1}(x) \leq -C\delta^{- \frac{\alpha}{2}}(x) &\quad \text{in } \quad \Omega_{\rho_1}, \\
C^{-1} \delta(x) \leq \phi_1(x) \leq C \delta(x) &\quad \text{in} \quad \Omega, \\
\phi_1(x) = 0 &\quad \text{in} \quad \Rd \setminus \Omega ,
\end{cases}   
$$
where the constants $\rho_1$ and $C$ depend only on $d, \alpha, \Omega,\lambda, \Lambda $ and $\int_{\Rd} (1 \wedge |y|^{\alpha} ) k(y) \D{y}$.
\end{lemma}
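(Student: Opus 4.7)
The plan is to take the ansatz $\phi_1(x) = \delta(x) - \eta\,\delta(x)^{1+\upepsilon}$ in a neighborhood of $\partial\Omega$, with the critical choice $\upepsilon = 1 - \alpha/2$ and a small parameter $\eta > 0$, then glue this to a smooth positive extension in the interior of $\Omega$. Since $\Omega$ is $C^2$, there is $\rho > 0$ such that $\delta \in C^2(\Omega_\rho)$, so the ansatz is classical near the boundary; outside a smaller neighborhood I would replace $\delta$ by a standard $C^2$ regularized distance $\tilde\delta$ that agrees with $\delta$ in $\Omega_{\rho/2}$ and remains bounded away from zero, and set $\phi_1\equiv 0$ on $\Rd\setminus\Omega$. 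Choosing $\eta$ small enough relative to $(\diam\Omega)^{\upepsilon}$ immediately gives the comparability $C^{-1}\delta \leq \phi_1 \leq C\delta$ in $\Omega$.

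To estimate $\cP^{+}\phi_1 + \cP^{+}_k\phi_1$ in $\Omega_{\rho_1}$, I would use subadditivity of the extremal operators together with the identity $\cP^{+}(-u) = -\cP^{-}(u)$ (valid both for the local Pucci and, by $(-a)^{+} = a^{-}$, for its nonlocal analogue) to obtain
$$
\cP^{+}\phi_1 + \cP^{+}_k\phi_1 \leq \bigl[\cP^{+}\delta + \cP^{+}_k\delta\bigr] - \eta\bigl[\cP^{-}(\delta^{1+\upepsilon}) + \cP^{-}_k(\delta^{1+\upepsilon})\bigr].
$$
The first bracket is bounded by $C(1+\delta^{1-\alpha})$ via \cref{Lem3.4}. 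For the second bracket, \cref{Lem3.3} yields $\cP^{-}(\delta^{1+\upepsilon}) \geq C_1\upepsilon\,\delta^{\upepsilon-1} - C_2$, and the argument of \cref{Lem3.2}, which relies only on the pointwise bound $N_{\theta\nu} \leq k$, controls $\lvert\cP^{\pm}_k(\delta^{1+\upepsilon})\rvert \leq C(1+\delta^{1-\alpha})$. Combining,
$$
\cP^{+}\phi_1 + \cP^{+}_k\phi_1 \leq -\eta C_1\upepsilon\,\delta^{\upepsilon-1} + C_3(1+\eta)(1+\delta^{1-\alpha}) + \eta C_2.
$$

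With the choice $\upepsilon = 1 - \alpha/2$, the destabilizing term becomes $-\eta C_1\upepsilon\,\delta^{-\alpha/2}$; since $\alpha \in (1,2)$, one has $-\alpha/2 < 1-\alpha < 0$, so $\delta^{-\alpha/2}$ strictly dominates $\delta^{1-\alpha}$ as $\delta \to 0^{+}$. I would first fix $\eta$ small enough to guarantee the comparability, and then shrink $\rho_1 > 0$ so that on $\Omega_{\rho_1}$ the positive $\delta^{1-\alpha}$ and constant contributions are absorbed by half of the leading term, yielding $\cP^{+}\phi_1 + \cP^{+}_k\phi_1 \leq -C\,\delta^{-\alpha/2}$ as required.

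The main obstacle is the sharp matching of exponents: the $\delta^{1-\alpha}$ singularity inherited from $\cP^{+}_k\delta$ and $\cP^{+}_k(\delta^{1+\upepsilon})$ must be strictly beaten by the gain from $\cP^{-}(\delta^{1+\upepsilon})$, which forces $\upepsilon < 2-\alpha$; the value $\upepsilon = 1 - \alpha/2 \in (\tfrac12, 1)$ lies strictly below $2-\alpha$ for all $\alpha \in (1,2)$ and tunes the output exponent to $-\alpha/2$ exactly. The $C^2$ regularization of $\delta$ in the interior is a routine ingredient that does not interfere with the estimate, since the differential inequality is only required in the thin boundary strip $\Omega_{\rho_1}$ where $\phi_1$ coincides with $\delta - \eta\delta^{1+\upepsilon}$.
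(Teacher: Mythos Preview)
Your approach is essentially the paper's own proof: the same ansatz $\phi_1=\delta-\eta\,\delta^{1+\upepsilon}$ with $\upepsilon=(2-\alpha)/2$, the same subadditivity splitting of the extremal operators, the same invocation of \cref{Lem3.3} for the gain $C_1\upepsilon\,\delta^{\upepsilon-1}=C_1\upepsilon\,\delta^{-\alpha/2}$ and of \cref{Lem3.2,Lem3.4} for the $O(\delta^{1-\alpha})$ error, followed by shrinking $\rho_1$ (the paper simply fixes $\eta=c=(\diam\Omega)^{-2}$ from the outset). One harmless slip: for $\alpha\in(1,2)$ your exponent satisfies $\upepsilon=1-\alpha/2\in(0,\tfrac12)$, not $(\tfrac12,1)$; the inequality $\upepsilon<2-\alpha$ that you actually use is unaffected.
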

\begin{proof}
Let $\upepsilon = \frac{2-\alpha}{2}$ and  $c = \frac{1}{(\diam \Omega)^2}$, and define 
$$
\phi_1(x) = \delta (x)- c \delta^{1+\upepsilon}(x).
$$
Since both $\delta$ and $\delta^{1+\upepsilon}$ are in $C^2(\Omega)$,  we have
$\cP^{+} \phi_1(x) \leq \cP^{+} \delta(x) -c \cP^{-} \delta^{1+\upepsilon}(x).$
Then by \cref{Lem3.3}  and $\sup_{\theta\nu} | \trace (a_{\theta \nu}(x) D^2 \delta(x))| \leq \tilde{C}$,  we get for all $x \in \Omega_{\rho}$
\begin{align*}
\cP^{+} \phi_1(x) &\leq \cP^{+} \delta(x) -c \cP^{-} \delta^{1+\epsilon}(x) 
\leq C - c (C_1\cdot \upepsilon \delta^{\upepsilon - 1} (x)).
\end{align*}
Similarly for all $x \in \Omega_{\rho}$,  using \cref{Lem3.2} and \cref{Lem3.4} we get
$$
\cP^{+}_k \phi_1 (x) \leq  | \cP^{+}_k  \delta (x)| + c | \cP^{-}_k \delta^{1+\upepsilon} (x) | \leq C_2 \delta^{1-\alpha}(x).
$$
Combining the above inequalities we have
\begin{align*}
\cP^{+} \phi_1 (x) + \cP_k^{+}\phi_1(x) &\leq C-cC_1 \upepsilon \delta^{\upepsilon-1}(x) + C_2 \delta^{1-\alpha}(x)\\
&\leq -\delta^{\upepsilon-1}(x) \left( \frac{C_1 (2-\alpha)}{2 (\diam \Omega)^2} - C \delta^{\frac{\alpha}{2}}(x) - C_2 \delta^{\frac{2-\alpha}{2}}(x) \right) ,
\end{align*}
for all $x \in \Omega_{\rho}$. Now choose $0<\rho_1\leq \rho < 1$ such that 
$$
\left( \frac{C_1 (2-\alpha)}{2 (\diam \Omega)^2} - C \rho_1^{\frac{\alpha}{2}} - C_2 \rho_1^{\frac{2-\alpha}{2}} \right) \geq \frac{C_1 (2-\alpha)}{4 (\diam \Omega)^2}.
$$
Thus for all $x \in \Omega_{\rho_1},$ we have 
$$
\cP^{+} \phi_1 (x) + \cP_k^{+}\phi_1(x) \leq  - \frac{C_1 (2-\alpha)}{4 (\diam \Omega)^2}  \delta^{-\frac{\alpha}{2}}(x). 
$$
Finally the construction of $\phi_1$  immediately gives us that 
$$
C^{-1} \delta(x) \leq \phi_1(x) \leq C \delta(x) \quad \text{in} \quad \Omega,
$$
and $\phi_1 =0$ in $\Omega^c.$ This completes the proof of the lemma.
\end{proof}
As mentioned \textcolor{black}{in the introduction}, the key step of proving \cref{T1.2} is to obtain the oscillation lemma \cref{P2.1}. For this we next prove two preparatory lemmas. In the first lemma we obtain a lower bound of $\inf_{\sD_{\frac{R}{2}}} \frac{u}{\delta}$ whereas the second lemma controls $\sup_{\sD^+_{\kappa^{\prime}R}} \frac{u}{\delta}$ by using that lower bound. 
\begin{lemma}\label{Lem3.7}
Let $\alpha\in(0,2)$ and  $\Omega$ be a bounded $C^{2}$ domain in $\Rd$. Also, let $u$ be such that $u \geq 0$ in $\Rd$,  and $ \abs{\cL u} \leq C_2(1+  \Ind_{(1,2)}(\alpha)\delta^{1-\alpha}) $ in $\sD_R$, for some constant $C_2$. If $\hat{\alpha}$ is given by
\begin{align*}
    \hat\alpha=\begin{cases}
    1\,\,\,\, &if \,\,\,\, \alpha \in (0,1],\\
    \frac{2-\alpha}{2}\,\,\,\, &if \,\,\,\, \alpha \in (1,2),
        \end{cases}
\end{align*}
then there exists a positive constant $C$ depending only on $d,  \Omega ,  \Lambda, \lambda, \alpha, \int_{\Rd} (1 \wedge |y|^{\alpha}) k(y)\D{y}$, such that
\begin{equation}\label{EL2.3A}
\inf_{\sD^{+}_{\kappa^{\prime}R}}  \frac{u}{\delta}  \leq C \left( \inf_{\sD_{\frac{R}{2}}} \frac{u}{\delta} + C_2 R^{\hat \alpha} \right)
\end{equation}
for all $R \leq \rho_0$,  where the constant $\rho_0$ depends only on 
$d,  \Omega,  \lambda , \Lambda, \alpha$ and $\int_{\Rd}( 1 \wedge |y|^\alpha ) k(y)\D{y}$.
\end{lemma}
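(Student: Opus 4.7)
The plan is to adapt the Ros-Oton--Serra strategy: use the radial subsolution $\phi_r$ from \cref{L2.2} as a lower barrier for $u$, comparing on a ball-annulus pair centered at a suitable interior point, in order to propagate the pointwise lower bound $u \geq m\,\delta$ from $\sD^+_{\kappa^{\prime} R}$ (with $m := \inf_{\sD^+_{\kappa^{\prime} R}} u/\delta$) to an arbitrary $y \in \sD_{R/2}$, modulo an error of order $C_2 R^{\hat{\alpha}}$.

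Given $y \in \sD_{R/2}$, set $r := \kappa R$, let $y^* \in \partial\Omega$ be the closest boundary point to $y$, and define $z := y^* + 4r\,{\rm n}(y^*)$. By \eqref{E2.19}, $\sB_r(z) \subset \sD^+_{\kappa^{\prime} R}$ and $\sB_{4r}(z) \subset \sD_R \subset \Omega$; on $\sB_r(z)$ one has $\delta \geq 3r$, hence $u \geq 3mr$ there. Set $\psi(x) := (3m/\tilde{\kappa})\,\phi_r(x - z)$ using \cref{L2.2}. Then $\psi \leq u$ on both $\sB_r(z)$ (since $\psi \leq 3mr$) and $\Rd \setminus \sB_{4r}(z)$ (since $\psi \leq 0 \leq u$); on the annulus $A := \sB_{4r}(z) \setminus \bar{\sB}_r(z)$, $\psi$ is $C^2$ and classically satisfies $\cP^-\psi + \cP^-_k\psi \geq 0$, whence $\cL\psi \geq 0$.

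Following an analog of \cref{rem2.1} adapted to the supersolution side---$u$ is a viscosity supersolution of $\cL u \leq K$ with $K := C_2(1 + \Ind_{(1,2)}(\alpha)\delta^{1-\alpha})$, and $\psi$ is classical with $\cL\psi \geq 0$---one obtains that $\psi - u$ is a viscosity subsolution of $\cP^+(\psi - u) + \cP^+_k(\psi - u) \geq -K$ on $A$, with $\psi - u \leq 0$ on $\Rd \setminus A$. To convert this into a pointwise bound I would rescale by $r$ via $v(x) := r^{-1}(u - \psi)(rx + z)$ on $\sB_2$; then $v$ satisfies a Pucci supersolution inequality with right-hand side bounded by $C'C_2\, r^{1 + \alpha^{\prime}}$, where $\alpha^{\prime} = 1 \wedge (2 - \alpha)$, matching the hypothesis of \cref{T5.1}. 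The required global sup and linear-growth bounds on $v$ follow from the Lipschitz estimate $u \in C^{0,1}(\Rd)$ furnished by \cref{T1.1}. Applying \cref{T5.1} (or \cref{Col3.1}) to an appropriate nonnegative shift of $v$ yields the error bound $\sup_A (\psi - u)^+ \leq CC_2 R^{\hat{\alpha}}\,\delta(y)$ after unscaling.

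Finally, on the normal line from $y^*$ through $z$, an elementary computation gives $4r - |y - z| = \min(\delta(y),\,8r - \delta(y)) \gtrsim \delta(y)$ for $y$ with $\delta(y) \leq 4r$; combined with the explicit lower bound $\psi(y) \geq (3m/\tilde{\kappa}^2)(4r - |y - z|)$ from \cref{L2.2}, this yields $u(y)/\delta(y) \geq c\,m - CC_2 R^{\hat{\alpha}}$, which rearranges to \eqref{EL2.3A}. The main obstacle will be the scaling bookkeeping in the weak-Harnack step: the singular weight $\delta^{1-\alpha}$ in $K$ must be absorbed into the rescaled Pucci right-hand side so that the bound $r^{1+\alpha^{\prime}}$ holds uniformly on $\sB_2$, which may require splitting $A$ into a region where $\delta \gtrsim r$ and a thin strip near $\partial\Omega$ handled by a separate iterative argument. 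A secondary technical point is that a point $y \in \sD_{R/2}$ with $\delta(y) > 8r$ lies outside $\sB_{4r}(z)$ and must be linked back to $\sD^+_{\kappa^{\prime} R}$ through a finite Harnack chain built from \cref{Col3.1}.
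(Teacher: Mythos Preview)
Your overall strategy---propagate the lower bound $u \ge m\delta$ from $\sD^+_{\kappa' R}$ to $\sD_{R/2}$ via the radial subsolution $\phi_r$ of \cref{L2.2}, centred at $\tilde y=y^*+4r\,{\rm n}(y^*)$---matches the paper. The divergence, and the gap, is in how you absorb the right-hand side $K=C_2(1+\Ind_{(1,2)}(\alpha)\delta^{1-\alpha})$.

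The paper does \emph{not} use the weak Harnack \cref{T5.1} in this lemma. Instead it adds an explicit corrector supersolution to $m\tilde\phi_r$: for $\alpha\in(1,2)$ the function $\phi_1$ from \cref{Lem3.5}, which satisfies $\cP^+\phi_1+\cP^+_k\phi_1\le -C\delta^{-\alpha/2}$ and $\phi_1\asymp\delta$; for $\alpha\in(0,1]$ an exterior-ball barrier $\varphi_{r'}$ from \cite[Lemma~5.4]{M19}. Since $\delta^{1-\alpha}\le R^{-\hat\alpha}\delta^{-\alpha/2}$ on $\sD_R$, the combination $v=m\tilde\phi_r-\mu R^{\hat\alpha}\phi_1$ (resp.\ $m\tilde\phi_r-C_2(r')^2\varphi_{r'}$) satisfies $\cP^-v+\cP^-_k v\ge K$ \emph{classically} on the whole annulus, so $v-u$ is a subsolution with \emph{zero} right-hand side and the plain maximum principle \cref{A3.1} gives $u\ge v$ in $\Rd$. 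Because the corrector is comparable to $\delta$, this yields $u(y)/\delta(y)\ge c\,m-CC_2 R^{\hat\alpha}$ directly.

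Your route via \cref{T5.1} runs into a genuine obstacle. The annulus $A=\sB_{4r}(z)\setminus\bar{\sB}_r(z)$ touches $\partial\Omega$ at $y^*$, so $\delta$ vanishes on part of $A$ and $K$ is unbounded there when $\alpha>1$. Your rescaling $v(x)=r^{-1}(u-\psi)(rx+z)$ on $\sB_2$ only sees $\sB_{2r}(z)$, where indeed $\delta\ge 2r$ and the RHS scales like $r^{1+\alpha'}$; but the target point $y$ sits at distance $|y-z|=4r-\delta(y)$ from $z$, which is close to $4r$ when $\delta(y)$ is small, hence $y\notin\sB_{2r}(z)$ in general. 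Enlarging the rescaled ball to cover $y$ forces you back into the region where the RHS blows up, and neither \cref{T5.1} nor an ABP estimate will produce a $\delta$-weighted bound without additional input. The ``separate iterative argument'' you allude to for the thin boundary strip is precisely what the construction of $\phi_1$ in \cref{Lem3.5} replaces; that construction is the analytic core of the proof and is not circumvented by Harnack. Two smaller points: you invoke \cref{T1.1} for the growth bounds required by \cref{T5.1}, but the hypotheses here do not include $u=0$ in $\Omega^c$ or an equation on all of $\Omega$, so \cref{T1.1} is unavailable; and your secondary concern about $\delta(y)>8r$ does not arise, since by the geometry in \eqref{E2.19} any $y\in\sD_{R/2}$ with $\delta(y)\ge 4\kappa R$ already lies in $\sD^+_{\kappa'R}$.
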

\begin{proof}
 Suppose $R \leq \eta \rho$, where
$\rho$ is given by \cref{R2.2} and $\eta \leq 1$ be some constant that will be chosen later.
Define $m= \inf_{\sD^{+}_{\kappa^{'}R} } u/ \delta \geq 0$.
Let us first observe that by \eqref{E2.18} we have, 
\begin{equation}\label{EL2.3B}
u \geq m\delta \geq m\,(\kappa R)\quad  \text{in}\;\; \sD^{+}_{\kappa^{\prime}R}.
\end{equation}
Moreover by \eqref{E2.19}, for any $y \in \sD_{R/2},$ we have either $y \in \sD^+_{\kappa^{\prime}R}$ or $\delta(y) < 4\kappa R.$ If $y \in \sD^+_{\kappa^{\prime}R},$ then by the definition of $m$ we get $m \leq u(y)/ \delta(y).$

 Next we consider $\delta(y) < 4 \kappa R$.
Let $y^{\ast}$ be the nearest point to $y$ on $\partial \Omega,$ i.e, $\dist(y, \partial \Omega)=|y-y^*|$ and define
$ \tilde{y} = y^{\ast} + 4 \kappa R\, {\rm n} (y^{\ast})$.  Again by \eqref{E2.19},   we have $$\sB_{4\kappa R}(\tilde{y})  \subset \sD_{R} \,\,\,\,  \text{and} \,\,\,\, \sB_{\kappa R}(\tilde{y}) \subset \sD^{+}_{\kappa^{\prime}R} .$$ 
Denoting $r=\kappa R$ and using the subsolution constructed in \cref{L2.2}, define $\tilde{\phi}_{r}(x):=\frac{1}{\tilde{\kappa}}\phi_{r}(x-\tilde{y}).$
 We will consider two cases. \\
\noindent{\textbf{Case 1:}}  $\alpha \in (0,1]$.
Take $r^{\prime} = \frac{R}{\eta}$.  Since $r^{\prime} \leq \rho$,  points of $\partial\Omega$  can be touched by exterior ball of radius $r^\prime$.  In particular,  for $y^{\ast}\in\partial\Omega$, 
we can find a point $z\in \Omega^c$ such that $\bar{\sB}_{r^{\prime}}(z) \subset \Omega^c$ touches
$\partial \Omega$ at $y^{\ast}$. Now from  \cite[Lemma~5.4]{M19}
there exists a bounded, Lipschitz continuous function $\varphi_{r^{\prime}}$, with Lipschitz constant  $\frac{1}{r^{\prime}}$, that satisfies
$$
\begin{cases}
\varphi_{r^\prime} = 0, & \quad \text{in} \quad \bar{\sB}_{r^\prime} , 
\\
\varphi_{r^\prime} > 0, & \quad \text{in} \quad \bar{\sB}_{r^\prime}^c ,
 \\
\cP^+ \varphi_{r^\prime} + \cP^+_k \varphi_{r^\prime} \leq - \frac{1}{(r^{\prime})^2} , & \quad \text{in} \quad \sB_{(1+\sigma)r^\prime}\setminus \bar{\sB}_{r^\prime},
\end{cases}
$$
for some constant $\sigma$, independent of $r^{\prime}$. Without any loss of any
generality we may assume $\sigma\leq\upgamma$ (see \cref{R2.2}).
 Then setting $\eta =\frac{\sigma}{8}$ and using \cref{R2.2}, we have $$\sD_{R} \subset \sB_{(1+\sigma)r^{\prime}}(z) \setminus  \overline{\sB}_{r^\prime}(z)$$ and by \eqref{E2.19} we have 
 \[\sB_{4r}(\tilde{y})\setminus \overline{\sB}_{r}(\tilde{y}) \subset \sD_{R}\subset \sB_{(1+\sigma)r^{\prime}}(z)\setminus \overline{\sB}_{r^{\prime}}(z) .\]
We show that $v(x)=m \tilde{\phi}_{r}(x)-C_2 (r^{\prime})^2 \varphi_{r^{\prime}}(x-z)$ is an appropriate subsolution. Since both $\tilde{\phi}_{r}$ and $\varphi_{r^{\prime}}$ are $C^2$ functions in $\sB_{4r}(\tilde{y})\setminus \bar{\sB}_{r}(\tilde{y}),$ we conclude that $v$ is $C^2$ function in $\sB_{4r}(\tilde{y})\setminus \bar{\sB}_{r}(\tilde{y}).$ For $x \in \sB_{4r}(\tilde{y})\setminus \bar{\sB}_{r}(\tilde{y}),$
\[\cP^-v(x)+\cP^-_k v(x) \geq m \left[\cP^-\tilde{\phi}_{r}(x)+\cP^-_k \tilde{\phi}_{r}(x)\right]-C_2 (r^{\prime})^2\left[\cP^+ \varphi_{r^{\prime}}(x-z)+\cP^+_k \varphi_{r^{\prime}}(x-z)\right] \geq C_2.\] Therefore by \cref{rem2.1} we have 
\[\cP^+ ( v- u ) +\cP^+_k ( v - u ) \geq 0 \,\,\,\, \text{in}\,\,\,\, \sB_{4r}(\tilde{y}) \setminus \bar{\sB}_{r} (\tilde{y}).\] Furthermore, using \eqref{EL2.3B} and $u\geq 0$ in $\Rd$ we obtain $u(x) \geq m \tilde{\phi}_{r}(x)- C_2 (r^{\prime})^2 \varphi_{r^{\prime}}(x-z)$ in $\left(\sB_{4r}(\tilde{y}) \setminus \bar{\sB}_{r} (\tilde{y})\right)^c.$ Hence an application of maximum principle (cf. \cref{A3.1}) gives $u \geq v$ in $\Rd.$ Now for  $y \in \sD_{R/2}$,  using the Lipschitz continuity of $\varphi_{r^{\prime}}$ we get
\begin{align*}\label{eqq4.14}
    m \tilde{\phi}_{r}(y) \leq u(y)+ C_2 (r^{\prime})^2 \left[ \varphi_{r^{\prime}}(y-z)-\varphi_{r^{\prime}}(y^*-z)\right] \leq u(y)+C_2 {r^{\prime}} \cdot \delta(y)
\end{align*}
and as $y$ lies on the line segment joining $y^*$ to $\tilde{y}$ we get
\[\frac{u(y)}{\delta(y)} + C_2 r^{\prime} \geq \frac{m}{(\tilde{\kappa})^2}.\]  This gives
\[\inf_{\sD^+_{\kappa^{\prime}R}}\frac{u}{\delta} \leq C \left(\inf_{\sD_{R/2}}\frac{u}{\delta}+C_2 \frac{R}{\eta}\right)\] and finally choosing $\rho_0=\eta \rho$ we have \eqref{EL2.3A}.

\noindent{\textbf{Case 2:}} $\alpha \in (1,2)$.  Let $\rho_1$ as in \cref{Lem3.5} and consider $R \leq \rho_1 <1$.  Here we aim to construct an appropriate subsolution using $\tilde{\phi}_r(x) $  and supersolution constructed in \cref{Lem3.5}. Since $\delta(x) \leq 1$ in $\sD_R,$ we have $ \abs{\cL u (x)} \leq C_2(1+  \delta^{1-\alpha}(x))\leq 2C_2 \delta^{1-\alpha}(x) $ in $\sD_R$. Also by \cref{Lem3.5}, we have a bounded function $\phi_1$ which is $C^2$ in $\Omega_{\rho_1} \supset \sD_R$ and satisfies
\begin{align*}
\cP^{+} \phi_1(x) + \cP^{+}_k \phi_{1}(x) \leq -C\delta^{- \frac{\alpha}{2}}(x) = -C \frac{1}{\delta^{ \frac{2-\alpha}{2}} (x)} \delta^{1-\alpha}(x)
\leq  \frac{-C}{R^{\hat \alpha}} \delta^{1-\alpha}(x),
\end{align*}
for all $x \in \sD_R$.  Now we define the subsolutions as
\begin{equation*}
        v(x)=m \tilde{\phi}_r(x) -  \upmu \, R^{\hat\alpha} \phi_1(x), 
\end{equation*}
where the constant $\upmu$ is chosen suitably so that $ \cP^- v(x) + \cP^-_k v(x) \geq  2C_2 \delta^{1-\alpha}(x)$ in $\sB_{4r}(\tilde{y}) \setminus \bar{\sB}_r (\tilde{y})$  (i.e. $\upmu = \frac{2C_2}{C}$).  Also $u\geq v$ in $(\sB_{4r}(\tilde{y}) \setminus \bar{\sB}_r (\tilde{y}))^c$. 
 Using the same calculation as previous case for $v-u$ and  maximum principle \cref{A3.1} we derive that $ u \geq v$ in $\Rd$. Again, repeating the arguments of \textbf{Case 1} we get
 $$
\inf_{\sD^{+}_{\kappa^{\prime}R}}  \frac{u}{\delta} \leq C \left( \inf_{\sD_{\frac{R}{2}}} \frac{u}{\delta} + 2C_2 R^{\hat{\alpha}} \right)    \,.
$$
 Choosing $\rho_0 = \eta \rho \wedge \rho_1$ completes the proof.
\end{proof}
\begin{lemma}\label{L2.5}
Let $\alpha^\prime = 1 \wedge (2-\alpha)$ and  $\Omega$ be a bounded $C^{2}$ domain in $\Rd$. Also, let 
$u$ be a bounded continuous function such that $u \geq 0$ and $u \leq M_0 \delta(x)$  in $\Rd$,  and $ \abs{\cL u} \leq C_2(1+  \Ind_{(1,2)}(\alpha)\delta^{1-\alpha}) $ in $\sD_R$, for some constant $C_2$.
 Then, there exists a positive constant $C$,  depending only on $d, \lambda, \Lambda,  \Omega$ and $\int_{\Rd}(1 \wedge |y|^\alpha){k}(y)\D{y}$, such that
\begin{equation}\label{EL2.5A}
\sup_{\sD^{+}_{\kappa^{\prime}R}}  \frac{u}{\delta}  \leq C \left( \inf_{\sD^{+}_{\kappa^{\prime}R}} \frac{u}{\delta} + (M_0\vee C_2) R^{\alpha^\prime} \right)
\end{equation}
for all $R \leq \rho$,  where constant $\rho$ is given by \cref{R2.2}.
\end{lemma}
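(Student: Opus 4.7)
The plan is to apply the weak Harnack estimate \cref{Col3.1} to an appropriately rescaled copy of $u$ around each point of $\sD^+_{\kappa^{\prime} R}$, and then chain these local estimates by a standard Harnack chain to cover the whole set. Throughout, the key geometric input is that every $y\in\sD^+_{\kappa^{\prime}R}$ is an \emph{interior} point at distance at least $\kappa R$ from $\partial\Omega$, thanks to \eqref{E2.18}, so localizing on balls of radius of order $R$ never meets the boundary.

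Step 1 (local scaling). Fix $y\in\sD^+_{\kappa^{\prime}R}$ and set $r:=\kappa R/4$. By \eqref{E2.18}, $\sB_{\kappa R}(y)\subset\sD_R$, so $\delta(y)\geq \kappa R$ and $\delta(z)\geq \kappa R/2$ for every $z\in\sB_{2r}(y)$. Define $v(x):=u(y+rx)$. Arguing exactly as in the proof of \cref{T1.1},
\begin{align*}
\cL^{r} v(x)=r^{2}\,\cL u(y+rx)\qquad\text{in the viscosity sense, for } x\in\sB_{2}.
\end{align*}
Combining the hypothesis $|\cL u|\leq C_{2}(1+\Ind_{(1,2)}(\alpha)\delta^{1-\alpha})$ with $\delta\geq \kappa R/2$ on $\sB_{2r}(y)$ and with $R\leq\rho\leq 1$, $r\sim R$, yields $|\cL^{r} v(x)|\leq \tilde C_{2}\,r^{1+\alpha^{\prime}}$ in $\sB_{2}$, where $\tilde C_{2}$ depends on $C_{2},\kappa,\alpha,\Omega$; recall $\alpha^{\prime}=1\wedge(2-\alpha)$. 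Since $\lambda\Id\leq a_{\theta\nu}\leq\Lambda\Id$ and $0\leq N_{\theta\nu}\leq k$, the Pucci envelopes dominate $\cL^{r}$, so
\begin{align*}
\cP^{+}v+\cP^{+}_{k,r}v\;\geq\; -\tilde C_{2}\,r^{1+\alpha^{\prime}},\qquad \cP^{-}v+\cP^{-}_{k,r}v\;\leq\;\tilde C_{2}\,r^{1+\alpha^{\prime}}\qquad\text{in }\sB_{2}.
\end{align*}

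Step 2 (growth bound and weak Harnack). Using $0\leq u\leq M_{0}\delta$ on $\Rd$, the $1$-Lipschitzness of $\delta$ and $\delta(y)\leq R$,
\begin{align*}
0\leq v(x)\leq M_{0}\bigl(R+r|x|\bigr)\leq \tilde M_{0}\,r\,(1+|x|),\qquad \sup_{\Rd} v\leq M_{0}\diam(\Omega)\leq \tilde M_{0},
\end{align*}
with $\tilde M_{0}:=M_{0}\max(4/\kappa,\diam(\Omega))$. Thus $v$ meets the hypotheses of \cref{Col3.1} with scaling parameter $s=r$, and we obtain
\begin{align*}
\sup_{\sB_{1/4}}v\;\leq\; C\Bigl(\inf_{\sB_{1/4}}v+(\tilde M_{0}\vee \tilde C_{2})\,r^{1+\alpha^{\prime}}\Bigr).
\end{align*}
Unscaling gives, for every $y\in\sD^{+}_{\kappa^{\prime}R}$,
\begin{align*}
\sup_{\sB_{r/4}(y)}u\;\leq\; C\inf_{\sB_{r/4}(y)}u+C(M_{0}\vee C_{2})\,R^{1+\alpha^{\prime}}.
\end{align*}

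Step 3 (passing to $u/\delta$ and chaining). Because $\delta(y)\geq\kappa R$ and $|z-y|\leq r/4\ll\delta(y)$ for $z\in\sB_{r/4}(y)$, $\delta$ is comparable to $R$ on this ball; dividing the previous display by $\delta\sim R$ yields the local form
\begin{align*}
\sup_{\sB_{r/4}(y)}\frac{u}{\delta}\;\leq\; C\inf_{\sB_{r/4}(y)}\frac{u}{\delta}+C(M_{0}\vee C_{2})\,R^{\alpha^{\prime}}.
\end{align*}
Now $\sD^{+}_{\kappa^{\prime}R}$ is connected, has diameter $\leq 2\kappa^{\prime} R$, and can be covered by a number $N$ of overlapping balls $\sB_{r/8}(y_{i})$ with $y_{i}\in\sD^{+}_{\kappa^{\prime}R}$, where $N$ depends only on $\kappa,d,\Omega$. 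Linking any two points of $\sD^{+}_{\kappa^{\prime}R}$ by such a chain and iterating the local Harnack inequality in the standard way produces the telescoping constant $C^{N}$ and telescoped error $\sum_{j=0}^{N-1}C^{j}\cdot C(M_{0}\vee C_{2})R^{\alpha^{\prime}}$, which are all absorbed into the constants of \eqref{EL2.5A}.

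The main technical obstacle is to pin down the scaling compatibly with \cref{Col3.1}: the linear-in-$|x|$ growth $|v(x)|\leq \tilde M_{0}\,s(1+|x|)$ and the right-hand-side control $C_{0}s^{1+\alpha^{\prime}}$ must both hold with the \emph{same} $s$. The choice $r=\kappa R/4$ is dictated by this compatibility, since on one hand $u\leq M_{0}\delta$ together with the Lipschitzness of $\delta$ forces the growth parameter to be comparable to $R$, while on the other hand the singular bound $\delta^{1-\alpha}$ produces the exponent $1+\alpha^{\prime}$ only after rescaling by a distance comparable to $R\sim r$.
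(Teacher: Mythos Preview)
Your proof is correct and follows essentially the same route as the paper: rescale $u$ around each $y\in\sD^{+}_{\kappa^{\prime}R}$ at a scale comparable to $R$, verify the growth bound $|v|\lesssim s(1+|x|)$ and the right-hand-side bound $\lesssim s^{1+\alpha^{\prime}}$, apply \cref{Col3.1}, and then chain over a finite cover of $\sD^{+}_{\kappa^{\prime}R}$. The only cosmetic differences are that the paper takes $s=\kappa R$ (rather than your $r=\kappa R/4$), applies the covering/chaining to $u$ directly and divides by $\delta$ only at the very end (using $\delta\sim R$ on $\sD^{+}_{\kappa^{\prime}R}$), whereas you pass to $u/\delta$ before chaining; both orderings are fine.
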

\begin{proof}

 We will use the weak  Harnack inequality proved in \cref{T5.1} to show  \eqref{EL2.5A}.  Let $R \leq \rho$. Then for each $y \in \sD^{+}_{\kappa^{\prime}R}$,  we have $\sB_{\kappa R}(y) \subset \sD_R $.  Hence we have $ \abs{\cL u} \leq C_2(1+  \Ind_{(1,2)}(\alpha)\delta^{1-\alpha}(x)) $ in $\sB_{\kappa R}(y)$.  Without loss of generality, we may assume $y=0$. Let $s=\kappa R$  and define  $v(x) = u(sx)$ for all $x \in \Rd$.  Then,
it can be easily seen that 
$$
s^2 \cL [sx,u] =   \cL^s[x,v]  \df  \sup_{\theta \in \Theta} \inf_{\nu \in \Gamma}\left\{\trace a_{\theta \nu}(sx)D^2 v(x) + \cI^s_{\theta \nu}[x,v]\right\}  \qquad \text{for all } \; x \in \sB_2.
$$
This gives 
\begin{align*}
\left|\cL^s[x,v] \right| &\leq  C_2 s^2 ( 1 +  \Ind_{(1,2)}(\alpha)\delta^{1-\alpha}(sx))\\
	&\leq C_2  \left( s^2 +  \Ind_{(1,2)}(\alpha) s^2 \left(\kappa R\right)^{1-\alpha} \right)\\
	&\leq C_2 s^{1+\alpha^{\prime}} \,  ,
\end{align*}
 in $\sB_2$  where $\alpha^\prime = 1 \wedge (2-\alpha)$.  In second line,  we used that for each $x\in \sB_{\kappa R},$ $|sx| < \kappa R$ and hence $\delta(sx) > \frac{\kappa R}{2}=\frac{s}{2}$.   From $u \leq M_0 \delta(x)$ we have $v(y) \leq M_0 \diam \Omega$  and $v(y) \leq M_0 s (1+ |y|) $ in whole $\Rd$.
Hence by \cref{Col3.1}, we obtain 
$$
\sup_{\sB_{\frac{1}{4}}} v \leq C \left(\inf_{\sB_{\frac{1}{4}}} v  + (M_0\vee C_2) s^{1+  \alpha^{\prime}}\right) ,
$$
where constant $C$ does not depend on $s,M_0,C_2$.  This of course, implies
$$
\sup_{\sB_{\frac{\kappa R}{64}}(y) } u \leq C \left( \inf_{\sB_{\frac{\kappa R}{64}}(y)} u +(M_0\vee C_2)  R^{1+\alpha^\prime}\right),
$$
for all $y \in \sD^{+}_{\kappa^{\prime}R}$.  Now cover $\sD^{+}_{\kappa^{\prime}R}$ by a finite number of balls $\sB_{\kappa R/64} (y_i)$, independent of $R$,  to obtain
$$
\sup_{\sD^{+}_{\kappa^{\prime}R}} u \leq C \left(\inf_{\sD^{+}_{\kappa^{\prime}R} } u + (M_0\vee C_2) R^{1+\alpha^\prime}\right).
$$
Then \eqref{EL2.5A} follows since $ \kappa R/2 \leq \delta \leq 3 \kappa R/2$ in $\sD^{+}_{\kappa^{\prime}R}$.
\end{proof}
Now we are ready to prove the oscillation lemma.
\begin{proposition}\label{P2.1}
Let $u$ be a bounded continuous function such that $|\cL u| \leq K$ in
$\Omega$, for some constant $K$, and $u=0$ in $\Omega^c$. Given any $x_0 \in \partial \Omega$, let $\sD_R$ be as in the \cref{D2.1}. Then for some $\uptau \in (0, \hat{\alpha})$ there exists $C$, dependent on 
$\Omega, d, \lambda, \Lambda, \alpha$ and $\int_{\Rd}(1\wedge|y|^\alpha)k(y)\D{y}$ but not on $x_0$, such that
\begin{equation}\label{EP2.1A}
    \sup_{\sD_R} \frac{u}{\delta} -\inf _{\sD_R} \frac{u}{\delta} \leq CK R^{\uptau}
\end{equation}
for all $R\leq \rho_0$, where $\rho_0>0$ is a constant depending only on $\Omega, d, \lambda, \Lambda, \alpha$ and $\int_{\Rd}(1\wedge|y|^\alpha)k(y)\D{y}$.
\end{proposition}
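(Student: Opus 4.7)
The plan is a Krylov-type oscillation iteration in the spirit of \cite{RS14} and \cite{BMS22}, driven by the weak Harnack inequality \cref{T5.1} (equivalently \cref{Col3.1}) together with the one-sided boundary estimates \cref{Lem3.7} and \cref{L2.5}. By \cref{T1.1} I may reduce to $K=1$ and $|u|\leq C\delta$ in $\Rd$. Fix $x_0\in\partial\Omega$ and let $R_k=\rho_0\, 4^{-k}$ with $\rho_0$ chosen small enough that \cref{Lem3.7} and \cref{L2.5} apply inside $\sD_{R_k}$ for every $k\geq 0$.

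I would construct inductively nonincreasing $\bar M_k$ and nondecreasing $\bar m_k$ satisfying $\bar m_k\leq u/\delta\leq \bar M_k$ on $\sD_{R_k}$ and $L_k:=\bar M_k-\bar m_k\leq C_0 R_k^{\uptau}$ for some $\uptau\in(0,\hat\alpha)$ and a constant $C_0$ independent of $k$. At step $k$, I work with the two functions $v_k=u-\bar m_k\delta$ and $w_k=\bar M_k\delta-u$, which are nonnegative on $\sD_{R_k}$ by induction; by \cref{rem2.1} combined with \cref{Lem3.4}, both satisfy Pucci inequalities with right-hand sides of size $C(1+\delta^{1-\alpha})$ in $\sD_{R_k}$. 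A dichotomy as in \cite{RS14} selects one of them: at the interior point $\tilde y\in\sD^{+}_{\kappa^{\prime} R_k}$ produced in the proof of \cref{Lem3.7}, either $v_k(\tilde y)/\delta(\tilde y)\geq L_k/2$ or $w_k(\tilde y)/\delta(\tilde y)\geq L_k/2$; assume the former, as the latter case is symmetric.

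Applying \cref{L2.5} to $v_k$ then upgrades this one-point lower bound into $\inf_{\sD^{+}_{\kappa^{\prime} R_k}} v_k/\delta\geq c L_k - C R_k^{\alpha^{\prime}}$, and \cref{Lem3.7} propagates the infimum to all of $\sD_{R_k/2}$, yielding
$$
\inf_{\sD_{R_k/2}} v_k/\delta \geq \theta L_k - C R_k^{\hat\alpha}
$$
for some $\theta\in(0,1)$ independent of $k$. Translating back, $\inf_{\sD_{R_{k+1}}} u/\delta \geq \bar m_k + \theta L_k - C R_k^{\hat\alpha}$, so I would set $\bar m_{k+1}:=\bar m_k+\theta L_k-CR_k^{\hat\alpha}$ and $\bar M_{k+1}:=\bar m_{k+1}+L_{k+1}$ with $L_{k+1}=(1-\theta)L_k+2CR_k^{\hat\alpha}$. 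A standard geometric-series bookkeeping with $\uptau\in(0,\hat\alpha)$ and $C_0$ chosen so that $(1-\theta)C_0 R_k^{\uptau}+2CR_k^{\hat\alpha}\leq C_0 R_{k+1}^{\uptau}$ closes the induction, and \eqref{EP2.1A} follows for arbitrary $R\leq\rho_0$ by choosing $k$ with $R_{k+1}\leq R\leq R_k$.

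The main obstacle is that the auxiliary $v_k$ is only known to be nonnegative on $\sD_{R_k}$, whereas both \cref{Lem3.7} and \cref{L2.5} require global nonnegativity in $\Rd$. I would handle this as in \cite{BMS22}: replace $v_k$ by its positive part $v_k^{+}$ and bound the contribution of the negative part to the nonlocal operator using $|u|\leq C\delta$ from \cref{T1.1}, so that the tail enters only as a bounded additional right-hand side of order $R_k^{\alpha^{\prime}}$ that can be absorbed into the error term. The nonlinearity of $\cL$ prevents a direct subtraction argument and forces us to route $u-\bar m_k\delta$ (resp.\ $\bar M_k\delta-u$) through the Pucci inequalities of \cref{rem2.1}; this, together with the weak Harnack of \cref{Col3.1}, is what makes the comparison-free iteration go through.
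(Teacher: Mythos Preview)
Your approach is essentially the paper's: a Krylov-type oscillation iteration built on \cref{Lem3.7} and \cref{L2.5}, with the positive-part truncation to restore global nonnegativity. The only cosmetic difference is that you run a dichotomy at a single point of $\sD^{+}_{\kappa' R_k}$ and push one of $v_k,w_k$ forward, whereas the paper applies \cref{Lem3.7} and \cref{L2.5} to \emph{both} $u_k^+=u-m_k\delta$ and $\tilde u_k=M_k\delta-u$ and then adds the two inequalities (see \eqref{EP2.1I}--\eqref{EP2.1K}); both variants are standard and interchangeable.

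Two corrections are needed. First, the tail of $v_k^-$ does \emph{not} enter as a right-hand side of order $R_k^{\alpha'}$: for $\alpha\in(1,2)$ it is of order $R_k^{1-\alpha}$, which blows up (compare \eqref{E4.23}). The correct accounting is that this tail is absorbed into the $\delta^{1-\alpha}$ right-hand side of $|\cL v_k^+|$ on $\sD_{R_k/2}$ (using $\delta\leq R_k$ there), and only \emph{after} invoking \cref{Lem3.7} and \cref{L2.5} does the error emerge as $R_k^{\hat\alpha}$; the paper also uses the global Lipschitz bound on $u_k$, not merely $|u|\leq C\delta$, to control $u_k^-$ outside $\sB_{R_k}$. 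Second, \cref{Lem3.7} and \cref{L2.5} are stated for $|\cL\,\cdot\,|\leq C_2(1+\Ind_{(1,2)}(\alpha)\delta^{1-\alpha})$, not for Pucci inequalities; this is what you actually obtain, since $\delta\in C^2$ lets you bound $\cL(u-m_k\delta+u_k^-)$ by $|\cL u|$ plus extremal-operator terms on $m_k\delta$ and on the tail, exactly as in \eqref{EP2.1H}.
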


\begin{proof}
For the proof, we follow a standard method similar to \cite{RS14},
with the help of \cref{Lem3.4,L2.5,Lem3.7}.
Fix $x_0\in \partial \Omega$ and consider $\rho_0>0$ to be chosen later. Without loss of generality, we assume $x_0=0$.
In view of \eqref{EL2.1B}, we only consider the case $K>0$.
Taking $u/K$ instead of $u$, we can further assume that $K=1$, that is, $|\cL u| \leq 1$ in $\Omega$. From \cref{T1.1} we note that
$||u||_{C^{0,1}(\Rd)}\leq C_1$. We split the proof into two cases.

\medskip
\noindent{\textbf{Case 1:}} For $\alpha\in (0, 1]$, $\cI_{\theta\nu}u$ is classically defined and $|\cI_{\theta \nu}u|\leq \tilde{C}$ in $\Omega$ for all $\theta$ and $\nu.$ Consequently, one can combine the nonlocal term with the right hand side and only deal with local nonlinear operator $ \tilde \cL [x,u]:= \sup_{\theta \in \Theta} \inf_{\nu \in \Gamma} \left\{\trace a_{\theta \nu}(x) D^2 u(x)\right\}.$ In this case the proof is simpler and can be done following the same method as for the local case. However, the method we use below would also work with an appropriate modification.

\medskip
\noindent{\textbf{Case 2:}}  Now we deal with the case $\alpha\in (1,2)$. We show that there exist $\sK>0,$ $\rho_1\in (0, \rho_0)$ and $\uptau \in (0,1)$, dependent only on $\Omega, d,\lambda, \Lambda, \alpha$ and $\int_{\Rd}(1\wedge|y|^\alpha)k(y)\D{y}$, and monotone sequences $\{M_k\}$ and $\{m_k\}$ such that, for all $k\geq 0,$
\begin{align}\label{EP2.1B}
    M_k-m_k=\frac{1}{4^{k\uptau}}, \,\,\,\,\, -1\leq m_k \leq m_{k+1}<M_{k+1}\leq M_k \leq 1,
\end{align}
and
\begin{align}\label{EP2.1C}
    m_k \leq \sK^{-1} \frac{u}{\delta} \leq M_k \quad \text{in} \quad \sD_{R_k},  \quad \text{where}\quad R_k=\frac{\rho_1}{4^k}. 
\end{align}
Note that \eqref{EP2.1C} is equivalent to the following
\begin{align}\label{EP2.1D}
    m_k \delta \leq \sK^{-1} u \leq M_k \delta, \quad \text{in} \quad \sB_{R_k},  \quad \text{where}\quad R_k=\frac{\rho_1}{4^k}.
\end{align}
Next, we construct monotone sequences $\{M_k\}$ and $\{m_k\}$ by induction.

The existence of $M_0$ and $m_0$ such that \eqref{EP2.1B} and \eqref{EP2.1D} hold for $k=0$ is guaranteed by \cref{Barrier1}.
Assume that we have the sequences up to $M_k$ and $m_k$. We want to show the existence of $M_{k+1}$ and $m_{k+1}$ such that \eqref{EP2.1B}-\eqref{EP2.1D} hold. We set
\begin{align*}
    u_k=\frac{1}{\sK} u -m_k \delta.
\end{align*}
Note that to apply \cref{L2.5} we need $u_k$ to be nonnegative in $\Rd$.
Therefore we shall work with $u^+_k$, the positive part of $u_k$. Let $u_k=u^+_k-u^-_k$ and by the induction hypothesis,
\begin{align}\label{EP2.1F}
    u^+_k=u_k\quad \text{and} \quad  u^-_k=0\quad \text{in} \quad \sB_{R_k}.
 \end{align}
We need to find a lower bound on $u_k$. Since $u_k\geq 0$ in $\sB_{R_k}$ and $u_k$ is Lipschitz in $\Rd,$
we get for $x\in \sB^c_{R_k}$ that
\begin{align}\label{EP2.1G}
u_k(x)=u_k(R_k x_{\rm u}) + u_k(x)-u_k(R_k x_{\rm u})
\geq -C_L |x-R_k x_{\rm u}|,
\end{align}
where $z_{\rm u}=\frac{1}{|z|}z$ for $z\neq 0$ and $C_L$ denotes a
Lipschitz constant of $u_k$ which can be chosen independent of $k$.
Using \cref{Barrier1} we
 also have $|u_k|\leq  \sK^{-1}+\diam(\Omega)=C_1$ for all
 $x\in\Rd$.
Thus using \eqref{EP2.1F} and \eqref{EP2.1G} we calculate
$\cL [x,u^-_k]$ in $\sD_{\frac{R_k}{2}}$.
Let $x \in \sD_{R_k/2}$. By \eqref{EP2.1F}, $D^2 u^-_k(x)=0$. 
Then
\begin{align}\label{E4.23}
 0\leq \cI_{\theta\nu} [x,u^-_k]&= \int _{x+y \not \in \sB_{R_k}} u^-_k(x+y) N_{\theta\nu}(x,y)\D{y}
 \nonumber \\
&\leq\int _{ \left\lbrace |y|\geq \frac{R_k}{2}, x+y\neq 0 \right\rbrace } u^-_k(x+y)k(y)\D{y} \nonumber
 \\
&\leq C_L\int_{\left\lbrace \frac{R_k}{2}\leq |y| \leq 1,\; x+y\neq 0 \right\rbrace } \Big|(x+y)-R_k(x+y)_{\rm u}\Big| k(y)\D{y}
+ C_1\int_{|y|\geq 1} k(y)\D{y} \nonumber
\\
 &\leq C_L  \int_{\frac{R_k}{2}\leq |y| \leq 1}\left(|x|+R_k\right)k(y)\, \D{y} +
 C_L  \int_{\frac{R_k}{2}\leq |y| \leq 1} |y| k(y)\, \D{y} + C_1 \int_{\Rd} (1 \wedge |y|^{\alpha}) k(y) \,\D{y} \nonumber
 \\
  &\leq \kappa_3 \left[ \int_{\Rd} (1\wedge |y|^{\alpha}) k(y) \,\D{y} \right] \left(R_k^{1-\alpha} +1\right) \nonumber
  \\
  &\leq \kappa_4 R_k^{1-\alpha},
    \end{align}
for some constants $\kappa_3, \kappa_4$, independent of 
$k$.

Now we write $u^+_k=\sK^{-1} u -m_k \delta + u^-_k$. Since $\delta$ is $C^2$ and $u^-_k=0$ in $\sD_{\frac{R_k}{2}},$ first note that
\begin{align*}
    \cL u^{+}_k &\leq \sK^{-1} - (\cP^{-}+\cP^{-}_k)(m_k \delta) + (\cP^{+}+\cP^{+}_k)(u_k^{-}) , \\
    \cL u^{+}_k &\geq - \sK^{-1} - (\cP^{+}+\cP^{+}_k)(m_k \delta) + (\cP^{-}+\cP^{-}_k)(u_k^{-}).
\end{align*}
Using \cref{Lem3.4} and \eqref{E4.23} in the above estimate, we have
\begin{align}\label{EP2.1H}
    |\cL u^+_k| \leq \sK^{-1} + m_k C \delta^{1-\alpha} + \kappa_4 (R_k)^{1-\alpha} \,\,\, \text{in}\,\,\, \sD_{\frac{R_k}{2}}.
\end{align}
Since $\rho_1\geq R_k \geq \delta$ in $\sD_{R_k}$, for $\alpha > 1,$ we have $R^{1-\alpha}_k \leq \delta^{1-\alpha}$, and hence, from \eqref{EP2.1H}, we have
\[|\cL u^+_k| \leq \Big[\sK^{-1} [(\rho_1)]^{\alpha-1} + C + \kappa_4\Big] \delta^{1-\alpha}(x): =\kappa_5 \delta^{1-\alpha}(x) \quad \text{in}\quad \sD_{R_k/2}.\]
Now we are in a position to apply \cref{Lem3.7,L2.5}.
Recalling that
 \[u^+_k=u_k=\sK^{-1} u -m_k \delta \quad \text{in}\quad \sD_{R_k},\] 
and using \cref{Barrier1} we
 also have $|u^+_k|\leq |u_k|\leq  (\sK^{-1}+1) \delta(x) =C_1 \delta(x)$ for all
 $x\in\Rd$. 
 We get from \cref{Lem3.7,L2.5} that
 \begin{equation}\label{EP2.1I}
 \begin{aligned}
     \sup_{\sD^+_{\kappa^{\prime} R_k/2}} \Big(\sK^{-1} \frac{u}{\delta}-m_k\Big) &\leq C\Big(\inf_{\sD^+_{\kappa^{\prime} R_k/2}}\Big(\sK^{-1} \frac{u}{\delta}-m_k\Big) + (\kappa_5 \vee C_1) R^{\hat\alpha}_k\Big)\\
     &\leq C\Big(\inf_{\sD_{ R_k/4}}\Big(\sK^{-1} \frac{u}{\delta}-m_k\Big) + (\kappa_5 \vee C_1) R^{\hat\alpha}_k\Big).
 \end{aligned}
 \end{equation}
Repeating a similar argument for the function $\tilde{u}_k=M_k\delta- \sK^{-1} u$, we find
 \begin{align}\label{EP2.1J}
     \sup_{\sD^+_{\kappa^{\prime} R_k/2}} \Big(M_k-\sK^{-1} \frac{u}{\delta}\Big)\leq C\Big(\inf_{\sD_{ R_k/4}}\Big(M_k-\sK^{-1} \frac{u}{\delta}\Big) + (\kappa_5 \vee C_1) R^{\hat\alpha}_k\Big).
 \end{align}
Combining \eqref{EP2.1I} and \eqref{EP2.1J} we obtain the following.
     \begin{align}\label{EP2.1K}
         M_k-m_k &\leq C\Big(\inf_{\sD^+_{ R_k/4}}\Big(M_k-\sK^{-1} \frac{u}{\delta}\Big) +\inf_{\sD^+_{ R_k/4}}
         \Big(\sK^{-1} \frac{u}{\delta}-m_k\Big)+ (\kappa_5 \vee C_1) R^{\hat\alpha}_k\Big)\nonumber
         \\
         &=C\Big(\inf_{\sD_{ R_{k+1}}}\sK^{-1} \frac{u}{\delta}-\sup_{\sD_{ R_{k+1}}}\sK^{-1} \frac{u}{\delta} + M_k-m_k + (\kappa_5 \vee C_1) R^{\hat\alpha}_k\Big).
     \end{align}
Putting $M_k-m_k=\frac{1}{4^{\uptau k}}$ in \eqref{EP2.1K}, we have
 \begin{align}\label{EP2.1L}
         \sup_{\sD_{ R_{k+1}}}\sK^{-1} \frac{u}{\delta}-\inf_{\sD_{ R_{k+1}}}\sK^{-1} \frac{u}{\delta} &\leq \Big(\frac{C-1}{C}\frac{1}{4^{\uptau k}}+(\kappa_5 \vee C_1) R^{\hat\alpha}_k\Big)\nonumber
         \\
 &=\frac{1}{4^{\uptau k}} \Big(\frac{C-1}{C}+(\kappa_5 \vee C_1) R^{\hat\alpha}_k 4^{\uptau k}\Big).
  \end{align}
Since $R_k=\frac{\rho_1}{4^k}$ for $\rho_1\in (0, \rho_0)$,
we can choose $\rho_0$ and $\uptau$ small so that
\[
\Big(\frac{C-1}{C}+(\kappa_5 \vee C_1) R^{\hat\alpha}_k 4^{\uptau k}\Big) \leq \frac{1}{4^\uptau}.
\]
Putting in  \eqref{EP2.1L} we obtain 
 \[
 \sup_{\sD_{ R_{k+1}}}\sK^{-1} \frac{u}{\delta}-\inf_{\sD_{ R_{k+1}}}\sK^{-1} \frac{u}{\delta} \leq \frac{1}{4^{\uptau(k+1)}}.
 \]
Thus we find $m_{k+1}$ and $M_{k+1}$ such that \eqref{EP2.1B} and \eqref{EP2.1C} hold. It is easy to prove \eqref{EP2.1A} from
\eqref{EP2.1B}-\eqref{EP2.1C}.
\end{proof}
Next we establish \textcolor{black}{the} H\"{o}lder regularity of $u/\delta$ up to the boundary, that is \cref{T1.2}.
\begin{proof}[Proof of \cref{T1.2}]
Replacing $u$ by $\frac{u}{CK}$ we may assume that $|\cL u|\leq 1$
in $\Omega$. Let $v=u/\delta$. From \cref{Barrier1} we then have
\begin{equation*}\label{ET2.2A0}
\norm{v}_{L^\infty(\Omega)}\leq C,
\end{equation*}
for some constant $C$ and from \cref{T1.1} we have
\begin{equation}\label{ET2.2B}
\norm{u}_{C^{0,1}(\Rd)}\leq C.
\end{equation}
Also from \cref{P2.1} for each $x_0 \in \partial \Omega$ and for all $r>0$ we have
\begin{equation}\label{ET2.2D}
\sup_{\sD_r(x_0)} v - \inf_{\sD_r(x_0)} v \leq C r^{\uptau}.
\end{equation}
where $\sD_r(x_0) = \sB_r(x_0) \cap \Omega$ as before. To complete the
proof we shall show that 
\begin{equation}\label{ET2.2E}
\sup_{x, y\in\Omega, x\neq y}\frac{|v(x)-v(y)|}{|x-y|^\upkappa}\leq C,
\end{equation}
for some $\upkappa>0$. Let $r=|x-y|$ and there exists $x_0, y_0 \in \partial \Omega$ such that $\delta(x)=|x-x_0|$ and $\delta(y)=|y-y_0|.$ If $r\geq \frac{1}{8},$ then
\begin{align*}
    \frac{|v(x)-v(y)|}{|x-y|^{\upkappa}}\leq 2\cdot 8^{\upkappa}||v||_{L^{\infty}(\Omega)}.
\end{align*}
If $r<\frac{1}{8}$ and $r \geq \frac{1}{8}(\delta(x)\vee \delta(y))^p$ for some $p>2$ then clearly $x, y\in \sB_{(8r)^{1/p}}(x_0)$. Now using \eqref{ET2.2D} we obtain
\begin{align*}
    |v(x)-v(y)| \leq \sup_{\sD_{(8r)^{1/p}}(x_0)} v - \inf_{\sD_{ (8r)^{1/p}}(x_0)} v \leq 8 C  r^{\uptau/p}.
\end{align*}
If $r<\frac{1}{8}$ and $r < \frac{1}{8}(\delta(x)\vee \delta(y))^p,$ then $r<\frac{1}{8}(\delta(x)\vee \delta(y))$. This implies $y\in \sB_{\frac{1}{8}(\delta(x)\vee \delta(y))}(x)$ and $x\in \sB_{\frac{1}{8}(\delta(x)\vee \delta(y))}(y).$ Without loss of any generality assume $\delta(x)\geq \delta(y)$ and $y\in \sB_{\frac{\delta(x)}{8}}(x).$ Notice that this also gives us that $\delta(y)\geq \frac{\delta(x)}{8} .$  Using \eqref{ET2.2B} and the Lipschitz continuity of $\delta,$ we get
\begin{align*}
   |v(x)-v(y)|=\left|\frac{u(x)}{\delta(x)}-\frac{u(y)}{\delta(y)}\right|\leq  C \diam \Omega \cdot \frac{ r }{\delta(x)\cdot\delta(y)} \; .
\end{align*}
Also we have $\frac{1}{8} r^{2/p}  <\delta(x)\cdot\delta(y).$ This implies
\begin{align*}
    |v(x)-v(y)|\leq C \diam \Omega \cdot \frac{ r}{\delta(x)\cdot\delta(y)}< 8 C \diam \Omega \cdot r^{1-2/p}.
\end{align*}
Therefore choosing $\upkappa=(1-\frac{2}{p})\wedge \frac{\uptau}{p}$ we conclude \eqref{ET2.2E}. This completes the proof.
\end{proof}
\begin{remark}
We want to point out that all the analysis presented in this section remains valid for $\delta \in C^2({\Omega_{\rho}}),$ with the involved constants depending additionally on $\rho$.  However, for the sake of simplicity, we consider a $C^2$-extension of $\delta$ inside the domain $\Omega$. This $C^2$-extension of the distance function is crucially used in the following section.
\end{remark}
\section{Global H\"{o}lder regularity of the gradient}\label{section 5}
In this section we prove the H\"{o}lder regularity of $Du$ up to the boundary.
First, let us recall \[\cL [x,u]= \sup_{\theta \in \Theta} \inf_{\nu \in \gamma}\left\{ \trace  a_{\theta \nu}(x)D^2u(x) + \cI_{\theta \nu}[x,u]\right\}.\]
 We denote $v=u/\delta.$ Following \cite{BMS22}, next we obtain the operator inequalities satisfied by $v.$
\begin{lemma}\label{L4.1}
Let $\Omega$ be bounded $C^2$ domain in $\Rd$. If $|\cL u| \leq K$ in $\Omega$ and $u=0$ in $\Omega^c,$ then we have
\begin{equation}\label{EL2.7A}
\begin{aligned}
&\cL v+2K_0 d^2 \frac{|D\delta|}{\delta}|Dv| \geq \frac{1}{\delta} \Big[-K-|v|(P^{+}+P^{+}_k)\delta-\sup_{\theta, \nu}Z_{\theta \nu}[v, \delta]\Big] ,\\
&\cL v-2K_0 d^2 \frac{|D\delta|}{\delta}|Dv| \leq \frac{1}{\delta} \Big[K-|v|(P^{-}+P^{-}_k)\delta-\inf_{\theta, \nu}Z_{\theta \nu}[v, \delta]\Big]
\end{aligned}
\end{equation}
for some $K_0,$ where \[Z_{\theta \nu}[v, \delta](x)=\int_{\Rd}(v(y)-v(x))(\delta(y)-\delta(x))N_{\theta \nu}(x,y-x)dy.\]
\end{lemma}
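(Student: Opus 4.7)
The plan is to carry out a direct algebraic decomposition: substitute $u=v\delta$ and expand $\cL u$ via product rules in both the Hessian part and the nonlocal part, then use the sup-inf structure of $\cL$ together with the positivity of $\delta$ in $\Omega$ to isolate $\cL v$. For the local part the standard product rule
\[
D^2(v\delta) = \delta D^2 v + v D^2\delta + Dv\otimes D\delta + D\delta\otimes Dv
\]
yields
\[
\trace\bigl(a_{\theta\nu} D^2 u\bigr) = \delta\,\trace(a_{\theta\nu}D^2 v) + v\,\trace(a_{\theta\nu}D^2\delta) + 2\trace\bigl(a_{\theta\nu}(Dv\otimes D\delta)\bigr).
\]
For the nonlocal part, I would start from the elementary identity
\begin{align*}
(v\delta)(x+y)-(v\delta)(x) &= \delta(x)\bigl(v(x+y)-v(x)\bigr) + v(x)\bigl(\delta(x+y)-\delta(x)\bigr)\\
&\quad + \bigl(v(x+y)-v(x)\bigr)\bigl(\delta(x+y)-\delta(x)\bigr),
\end{align*}
and combine it with $D(v\delta)=vD\delta+\delta Dv$, redistributing the gradient correction $\Ind_{B_1}(y)\, D(v\delta)(x)\cdot y$ across the first two summands so each inherits its own correction. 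This produces
\[
\cI_{\theta\nu}[x,v\delta] = \delta(x)\,\cI_{\theta\nu}[x,v] + v(x)\,\cI_{\theta\nu}[x,\delta] + Z_{\theta\nu}[v,\delta](x).
\]

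Combining the two decompositions gives
\[
\cL u(x) = \sup_\theta\inf_\nu\Bigl\{\delta(x)\,A^v_{\theta\nu}(x) + v(x)\,A^\delta_{\theta\nu}(x) + 2\trace\bigl(a_{\theta\nu}(Dv\otimes D\delta)\bigr) + Z_{\theta\nu}[v,\delta](x)\Bigr\},
\]
where $A^f_{\theta\nu}:=\trace(a_{\theta\nu}D^2 f)+\cI_{\theta\nu}[x,f]$. Since $\delta>0$ in $\Omega$, applying the inequality $\sup_\theta\inf_\nu(X+Y)\le\sup_\theta\inf_\nu X+\sup_{\theta,\nu} Y$ to the hypothesis $\cL u\ge -K$ gives
\[
-K\le \delta\,\cL v + \sup_{\theta,\nu}\bigl\{v A^\delta_{\theta\nu} + 2\trace(a_{\theta\nu}(Dv\otimes D\delta))+Z_{\theta\nu}[v,\delta]\bigr\}.
\]
Bounding each piece of the supremum --- $v A^\delta_{\theta\nu}\le |v|(\cP^++\cP^+_k)\delta$ via the extremal Pucci inequalities ($\sup_{\theta,\nu}A^\delta_{\theta\nu}\le(\cP^++\cP^+_k)\delta$ pointwise and similarly for the infimum), and $2\trace(a_{\theta\nu}(Dv\otimes D\delta))\le 2K_0 d^2|Dv||D\delta|$ via uniform ellipticity $\lambda I\le a_{\theta\nu}\le\Lambda I$ --- and dividing through by $\delta$ produces the first inequality of \eqref{EL2.7A}. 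The second inequality follows by the symmetric argument applied to $\cL u\le K$ with $\sup_\theta\inf_\nu(X+Y)\ge\sup_\theta\inf_\nu X+\inf_{\theta,\nu} Y$ and the bound $\inf_{\theta,\nu}A^\delta_{\theta\nu}\ge(\cP^-+\cP^-_k)\delta$.

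The chief technical caveat is that $v$ need not be $C^2$, since $u$ is only Lipschitz by \cref{T1.1}, so the identities above must be interpreted in the viscosity sense. Given a $C^2$ test function $\varphi$ touching $v$ from below at $x_0\in\Omega$, the product $\varphi\delta$ is $C^2$ in a neighborhood of $x_0$ (using that $\delta\in C^2(\bar\Omega)$ for $C^2$ domains, as recalled in the proof of \cref{Lem3.2}) and touches $u$ from below at $x_0$ because $\delta(x_0)>0$. The viscosity definition of $\cL u\ge -K$ applied to the admissible test extension of $\varphi\delta$ then produces a pointwise inequality to which the algebraic expansion above applies verbatim with $\varphi$ in place of $v$. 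The principal obstacle is precisely this viscosity bookkeeping: one must check that when $\cI_{\theta\nu}[x,\varphi\delta]$ is split into three pieces, each piece is individually integrable and corresponds to a valid viscosity evaluation, so that the resulting pointwise inequality transfers unambiguously into the claimed differential inequality for $v$.
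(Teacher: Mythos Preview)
Your approach via product-rule decomposition and sup--inf splitting is exactly the route the paper takes. Two points, however, need correction or completion.

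First, the direction of the test function is reversed. To verify the first inequality of \eqref{EL2.7A} (a subsolution-type inequality) one must take $\varphi$ touching $v$ from \emph{above}, so that $\varphi\delta$ touches $u$ from above and the viscosity definition of $\cL u\geq -K$ applies. Testing from below would feed into the supersolution condition $\cL u\le K$ instead.

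Second, and more substantively, the viscosity bookkeeping you flag is not primarily about integrability but about which function appears in the cross term. In the viscosity definition the admissible extension is the hybrid $\psi_r$ equal to $\varphi$ in $\sB_r(x_0)$ and $v$ outside; your decomposition then produces $Z_{\theta\nu}[\psi_r,\delta](x_0)$ rather than $Z_{\theta\nu}[v,\delta](x_0)$, and likewise $\cL[x_0,\psi_r]$ rather than $\cL[x_0,v]$. The paper closes this gap by a limiting argument: since $\psi_r$ decreases as $r\downarrow 0$ and agrees with $\varphi$ to second order at $x_0$, $\cL[x_0,\psi_r]$ is monotone in $r$; one freezes a large $r$ on the left-hand side, applies the inequality at any smaller $r_1$, and sends $r_1\to 0$ on the right using dominated convergence to obtain $Z_{\theta\nu}[\psi_{r_1},\delta]\to Z_{\theta\nu}[v,\delta]$. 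The continuity of $Z_{\theta\nu}[v,\delta]$ needed here comes from $v\in C^1(\Omega)$, which follows from the interior estimate of \cref{TH2.1}.
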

\begin{proof}
First note that, since $u\in C^1(\Omega)$ by \cref{TH2.1}, we have $v\in C^1(\Omega)$.
Therefore, $Z_{\theta \nu}[v, \delta]$ is continuous in $\Omega$.
Consider a test function $\psi\in C^2(\Omega)$ that touches 
$v$ from above at $x\in\Omega$. Define
\[
\psi_r(z)=\left\{
\begin{array}{lll}
\psi(z)\quad &\text{in}\; \sB_r(x),
\\
v(z)\quad &\text{in}\; \sB^c_r(x).
\end{array}
\right.
\]
By our assertion, we have $\psi_r\geq v$ for all $r$ small. To verify the first inequality in \eqref{EL2.7A} we must show that
\begin{align}\label{EL2.7B}
\cL [x, \psi_r] + 2 k_0 d^2\frac{|D\delta(x)|}{\delta(x)}\cdot |D \psi_r(x)|
\geq \frac{1}{\delta(x)}[-K - |v(x)| (\cP^+ + \cP^+_k)\delta(x) -\sup_{\theta, \nu}Z_{\theta \nu}[v,\delta](x)],
\end{align}
for some $r$ small. We define
\[
\tilde\psi_r(z)=\left\{
\begin{array}{lll}
\delta(z)\psi(z)\quad &\text{in}\; \sB_r(x),
\\
u(z)\quad &\text{in}\; \sB^c_r(x).
\end{array}
\right.
\]
Then, $\tilde\psi_r\geq u$ for all $r$ small. Since $|\cL u|\leq K$
and $\delta\psi_r=\tilde\psi_r$, we obtain at a point $x$ \\
\begin{align*}
-K\leq &\cL [x, \tilde\psi_r]\\
=&\sup_{\theta\in \Theta} \inf_{\nu \in \gamma}  \biggl[ \delta(x)\left(\trace a_{\theta\nu}(x)D^2\psi_r(x)+\cI_{\theta\nu} \psi_r(x)\right) + \psi_r(x)\left(\trace a_{\theta \nu}(x)D^2\delta(x)+\cI_{\theta \nu}\delta(x)\right) \\
&+\trace\big[\left(a_{\theta \nu}(x)+a^{T}_{\theta \nu}(x)\right)\cdot\left(D\delta(x)\otimes D\psi_r(x)\right)\big]+ Z_{\theta \nu}[\psi_r, \delta](x) \biggr]\\
&\leq \delta(x)\cL[x, \psi_r]+ \sup_{\theta, \nu} \biggl[ \psi_r(x) \left(\trace a_{\theta \nu}(x)D^2\delta(x)+\cI_{\theta \nu}\delta(x)\right) \\
&+\trace\left[\left(a_{\theta \nu}(x)+a^{T}_{\theta \nu}(x)\right)\cdot\left(D\delta(x)\otimes D\psi_r(x)\right)\right]+ Z_{\theta \nu}[\psi_r, \delta](x) \biggr]\\
&\leq \delta(x)\cL [x,\psi_r] + |v(x)| \left(\cP^{+}+\cP^{+}_k\right)\delta(x) +2K_0d^2 |D\delta(x)|\cdot |D\psi_r(x)| + \sup_{\theta, \nu}Z_{\theta \nu}[\psi_r, \delta](x),
\end{align*}
for all $r$ small and some constant $K_0$, where $D\delta(x)\otimes D\psi_r(x):=\left(\frac{\partial \delta}{\partial x_i}\cdot\frac{\partial \psi_r}{\partial x_j}\right)_{i,j}.$ Rearranging the terms we have
\begin{equation}\label{EL2.7C}
-K-|v(x)| \left(\cP^{+}+\cP^{+}_k\right)\delta(x)- \sup_{\theta, \nu}Z_{\theta \nu}[\psi_r, \delta](x)\leq \delta(x)\cL[x,\psi_r] + 2K_0 d^2|D\delta(x)|\cdot |D\psi_r(x)|.
\end{equation}
Let $r_1\leq r$. Since $\psi_r$ is decreasing with $r$, we get
from \eqref{EL2.7C} that
\begin{align*}
\delta(x)\cL[x, \psi_r] + 2K_0 d^2|D{\delta(x)}|\cdot |D\psi_r(x)|
&\geq \delta(x)\cL[x,\psi_{r_1}] + 2K_0 d^2|D{\delta(x)}|\cdot| D\psi_{r_1}(x)|
\\
&\geq \lim_{r_1\to 0}\left[-K-\left|v(x)\right| \left(\cP^{+}+\cP^{+}_k\right)\delta(x)- \sup_{\theta, \nu}Z_{\theta \nu}[\psi_{r_1}, \delta](x)\right]
\\
&= \left[-K-\left|v(x)\right| \left(\cP^{+}+\cP^{+}_k\right)\delta(x)- \sup_{\theta, \nu}Z_{\theta \nu}[v, \delta](x)\right],
\end{align*}
by dominated convergence theorem. This gives \eqref{EL2.7B}. Similarly we can verify the second inequality  of \eqref{EL2.7A}.
\end{proof}
Next we obtain a the following estimate on $v,$ away from the boundary. Denote $\Omega^\sigma= \{x\in \Omega\; :\; \dist(x, \Omega^c)\geq \sigma\}$. 
\begin{lemma}\label{L4.2}
Let $\Omega$ be bounded $C^2$ domain in $\Rd$. If $|\cL u| \leq K$ in $\Omega$ and $u=0$ in $\Omega^c,$ 
then for some constant $C$ it holds that
\begin{equation}\label{EL2.8A0}
\norm{D v}_{L^\infty(\Omega^\sigma)}\leq CK \sigma^{\upkappa-1}
\quad \text{for all}\; \sigma\in (0, 1),
\end{equation}
where $\upkappa$ is the H\"{o}lder exponent from \cref{T1.2}.
Furthermore, there exists $\eta\in (0,1)$ such that for any
$x\in \Omega^\sigma$ and $0<|x-y|\leq \sigma/8$ we have
\begin{align}\label{grad v estimate}
\frac{|D v(y)- D v(x)|}{|x-y|^\eta}\leq CK \sigma^{\upkappa-1-\eta}\quad \text{for all}\,\, \sigma\in (0,1).    
\end{align}
\end{lemma}
\begin{proof}
Using \cref{L4.1} we have
\begin{equation}\label{Eq4.5}
\begin{aligned}
&\cL v+2K_0 d^2 \frac{|D\delta|}{\delta}|Dv| \geq \frac{1}{\delta} \Big[-K-|v|(\cP^{+}+\cP^{+}_k)\delta-\sup_{\theta, \nu}Z_{\theta \nu}[v, \delta]\Big] ,\\
&\cL v-2K_0 d^2 \frac{|D\delta|}{\delta}|Dv| \leq \frac{1}{\delta} \Big[K-|v|(\cP^{-}+\cP^{-}_k)\delta-\inf_{\theta, \nu}Z_{\theta \nu}[v, \delta]\Big]
\end{aligned}
\end{equation}
in $\Omega$.
Fix a point $x_0\in \Omega^\sigma$ and define
$$w(x)=v(x)-v(x_0).$$
From \eqref{Eq4.5} we then obtain
\begin{equation}\label{Eq4.6}
\begin{aligned}
&\cL w+2K_0 d^2 \frac{|D\delta|}{\delta}|Dw| \geq  \Big[-\frac{1}{\delta} K-\ell_1 \Big] ,\\ 
&\cL w-2K_0 d^2 \frac{|D\delta|}{\delta}|Dw| \leq \Big[\frac{1}{\delta} K+ \ell_2 \Big]\\
\end{aligned}
\end{equation}
in $\Omega$, where
$$
\ell_1(x)= \frac{1}{\delta(x)} \left[ |w(x)|(\cP^{+}+\cP^{+}_k)\delta(x)+\sup_{\theta, \nu}Z_{\theta \nu}[w, \delta](x) + |v(x_0)|(\cP^{+}+\cP^{+}_k)\delta(x)\right],
$$
and
$$
\ell_2(x)= \frac{1}{\delta(x)} \left[ |w(x)|(\cP^{-}+\cP^{-}_k)\delta(x)-\inf_{\theta, \nu}Z_{\theta \nu}[w, \delta] (x) -|v(x_0)|(\cP^{-}+\cP^{-}_k)\delta(x)  \right].
$$
We set $r=\frac{\sigma}{2}$ and claim that
\begin{equation}\label{EL2.8C}
\norm{\ell_{i}}_{L^\infty(B_r(x_0))}\leq \kappa_1 \sigma^{\upkappa-2},
\quad \text{for all}\; \sigma\in (0, 1) \; \text{and} \; i=1,2 ,
\end{equation}
for some constant $\kappa_1$.
Let us denote by
$$ \xi_1^{\pm}=\frac{|w(x)| (\cP^{\pm}+\cP^{\pm}_k)\delta}{\delta},
\quad \xi_2=\frac{1}{\delta} \sup_{\theta, \nu}Z_{\theta \nu}[w, \delta],
\quad  \xi_3^{\pm} = \frac{|v(x_0)|(\cP^{\pm}+\cP^{\pm}_k)\delta}{\delta}, \quad \xi_4=\frac{1}{\delta} \inf_{\theta, \nu}Z_{\theta \nu}[v, \delta] .$$
Recall that $\upkappa\in (0,  \hat{\alpha}).$ 
Since 
$$\norm{\cP^{\pm} \delta}_{L^\infty(\Omega)}<\infty\quad
\text{and}\quad \norm{ \cP^{\pm}_k \delta}_{L^\infty(\Omega_\sigma)}
\lesssim \left(1+\Ind_{(1,2)}(\alpha)\delta^{1-\alpha}\right)
$$
(cf \cref{Lem3.4} ), and
$$\norm{v}_{L^\infty}(\Rd)<\infty, 
\quad \norm{w}_{L^\infty(B_r(x_0))}\lesssim r^{\upkappa},$$
it follows that
$$\norm{\xi_3^{\pm}}_{L^\infty(B_r(x_0))}\lesssim 
\left.\begin{cases}
\frac{1}{\sigma} & \quad \text{if}\; \alpha\in (0, 1],
\\
\frac{1}{\sigma^{\alpha}}& \quad \text{if}\; \alpha\in(1,2)
\end{cases}
\right\}
\lesssim 
\sigma^{ \upkappa-2},$$
and
$$\norm{\xi_1^{\pm}}_{L^\infty(B_r(x_0))}\lesssim
\left.\begin{cases}
\frac{\sigma^{\upkappa}}{\delta} & \quad \text{if}\; \alpha\in (0, 1],
\\
\frac{\sigma^{\upkappa}}{\delta^{\alpha}}& \quad \text{if}\; \alpha\in(1,2)
\end{cases}
\right\}
\lesssim \sigma^{\upkappa-2}.$$
Next we estimate $\xi_2$ and $\xi_4$. Let $x\in \sB_r(x_0)$.
Denote by $\hat{r}=\delta(x)/4$. Note that 
$$\delta(x)\geq \delta(x_0)-|x-x_0|\geq 2r-r =r
\Rightarrow \hat{r}\geq r/4. $$
Since $u\in C^1(\Omega)$ by \cref{TH2.1} and $|u| \leq C \delta$ in $\Rd$ by \cref{Barrier1}. Thus we have
\begin{equation}\label{E5.8}
    |D v|\leq \left|\frac{D u}{\delta}\right|+ \left|\frac{uD \delta}{\delta^2}\right|\lesssim \frac{1}{\delta(x)}\quad \text{in}\; \sB_{\hat{r}}(x).
    \end{equation}
Now we calculate
\begin{align*}
|Z_{\theta \nu}[w, \delta](x)|\leq \int_{\Rd}|\delta(x)-\delta(y)||v(x)-v(y)|k(y-x)\D{y}
&=\int_{\sB_{\hat{r}}(x)}+\int_{\sB_1(x)\setminus \sB_{\hat r}(x)}+ \int_{\sB^c_1(x)}\\
&=I_1+I_2+I_3.
\end{align*}
To estimate $I_1,$ first we consider $\alpha\leq 1.$ Since $\delta$ is Lipschitz continuous and $v$ bounded on $\Rd,$ $I_1$ can be written as
\begin{align*}
    I_1&=\int_{\sB_{\hat{r}}(x)} \frac{|\delta(x)-\delta(y)|}{|x-y|} |v(x)-v(y)|\cdot |x-y|k(y-x)\D{y}\\
    &\lesssim \int_{\sB_{\hat{r}}(x)} |x-y|^{\alpha}k(y-x)\D{y} \leq \int_{\Rd}(1\wedge |z|^{\alpha})k(z)\D{z} . 
\end{align*}
For $\alpha \in (1,2),$ using the Lipschitz continuity of $\delta$ and \eqref{E5.8} we get
\begin{align*}
    I_1&=\int_{\sB_{\hat{r}}(x)} \frac{|\delta(x)-\delta(y)|}{|x-y|} \cdot \frac{|v(x)-v(y)|}{|x-y|}\cdot |x-y|^{\alpha} |x-y|^{2-\alpha}k(y-x)\D{y} \\
    &\lesssim  \frac{\hat{r}^{2-\alpha}}{\delta(x)} \int_{\sB_{\hat{r}}(x)}  |x-y|^{\alpha}k(y-x)\D{y} \lesssim \delta(x)^{1-\alpha}\int_{\Rd}(1\wedge |z|^{\alpha}) k(z)\D{z} \lesssim \sigma^{\upkappa-1}.
\end{align*}
Bounds on $I_2$ can be computed as follows: for
$\alpha\leq 1,$ we write
\begin{align*}
    I_2=\int_{\sB_1(x)\setminus \sB_{\hat r}(x)} |\delta(x)-\delta(y)||v(x)-v(y)|k(y-x)\D{y}
&\lesssim\int_{\sB_1(x)\setminus \sB_{\hat r}(x)} |x-y|^{\alpha} k(y-x) \D{y}\\
&\lesssim \int_{\Rd}(1\wedge |z|^\alpha)k(z)\D{z}. 
\end{align*} 
In the second line of the above inequality, we used
\[|\delta(x)-\delta(y)|\lesssim|x-y|\,\,\, \text{and}\,\,\,\,||v||_{L^{\infty}(\Rd)} <\infty.\]
For $\alpha\in (1,2)$ we can compute $I_2$ as
\begin{align*}
    \int_{\sB_1(x)\setminus \sB_{\hat r}(x)}|\delta(x)-\delta(y)||v(x)-v(y)|k(y-x) \D{y} &\lesssim \int_{\sB_1(x)\setminus \sB_{\hat r}(x)} |x-y|^{1-\alpha}\cdot|x-y|^{\alpha} k(y-x)\D{y}\\
&\lesssim \delta(x)^{1-\alpha} \int_{\Rd}(1\wedge |z|^\alpha)k(z)\D{z} \lesssim \sigma^{\upkappa-1}.
\end{align*} Moreover, since $\delta$ and $v$ are bounded in $\Rd$, we get $I_3\leq \kappa_3.$
Combining the above estimates we obtain
$$\norm{\xi_i}_{L^\infty \sB_r(x_0)}\lesssim \sigma^{\upkappa-2}\,\,\, \text{for}\,\,\, i=2,4.$$
Thus the claim \eqref{EL2.8C} is established.

Let us now define $\zeta(z)=w(\frac{r}{2}z + x_0)$. Letting
$b(z)=\frac{D\delta(\frac{r}{2}z+x_0)}{2 \delta(\frac{r}{2}z + x_0)}$ it follows from \eqref{Eq4.6} that
\begin{align}\label{EL2.8D}
&\tilde{\cL}^r \zeta +K_0d^2r b(z) \cdot |D\zeta| \geq -\frac{r^2}{4}\left[\frac{1}{\delta}K+l_1\right]\left(\frac{r}{2}z+x_0\right)\\
&\tilde{\cL}^r \zeta -K_0d^2r b(z) \cdot |D\zeta| \leq \frac{r^2}{4}\left[\frac{1}{\delta}K+l_2\right]\left(\frac{r}{2}z+x_0\right)\nonumber
\end{align}
in $\sB_2(0),$ where
\[\tilde{\cL}^r [x,u]:=\sup_{\theta \in \Theta}\inf_{\nu \in \Gamma}\left\{\trace\left(a_{\theta \nu}\left(\frac{r}{2}x+x_0\right)D^2u(x)\right)+\tilde{\cI}^r_{\theta\nu}[x,u]\right\}\] and $\tilde{\cI}^r_{\theta\nu}$ is given by
\[\tilde{\cI}^r_{\theta \nu}[x,f]=\int_{\Rd}\left(f(x+y)-f(x)-\Ind_{B_{\frac{1}{r}}(y)} \nabla f(x)\cdot y\right)\left(\frac{r}{2}\right)^{d+2}N_{\theta \nu}\left(\frac{r}{2}x+x_0,ry\right)\D{y}.\]

Consider a cut-off function $\varphi$ satisfying
$\varphi=1$ in $\sB_{3/2}$ and $\varphi=0$ in $\sB^c_2$. 
Defining $\tilde\zeta=\zeta\varphi$ we get from \eqref{EL2.8D} that
\begin{align*}
    &\tilde{\cL}^r [z, \tilde{\zeta}]+K_0d^2rb(z).|D\tilde{\zeta}(z)| \geq -\frac{r^2}{4}\left[\frac{K}{\delta}+|l_1|\right]\left(\frac{r}{2}z+x_0\right)-\left|\sup_{\theta \in \Theta}\inf_{\nu \in \Gamma}\tilde{\cI}^r_{\theta \nu}[z, (\varphi-1)\zeta]\right|\\
    &\tilde{\cL}^r [z, \tilde{\zeta}]-K_0d^2rb(z).|D\tilde{\zeta}(z)| \leq \frac{r^2}{4}\left[\frac{K}{\delta}+|l_1|\right]\left(\frac{r}{2}z+x_0\right)-\left|\sup_{\theta \in \Theta}\inf_{\nu \in \Gamma}\tilde{\cI}^r_{\theta \nu}[z, (\varphi-1)\zeta]\right|
\end{align*}
in $\sB_1$. Since 
$$\norm{rb}_{L^\infty(\sB_1(0))}\leq \kappa_3\quad \text{for all}\;
\sigma\in(0,1),$$
applying \cref{TH2.1} we obtain,  for some $\eta\in(0,1)$,
\begin{equation}\label{EL2.8E}
\norm{D \zeta}_{C^\eta(\sB_{1/2}(0))}
\leq \kappa_6 \left(\norm{\tilde\zeta}_{L^\infty(\Rd)} +\kappa_4 \sigma + \kappa_5 \sigma^{\upkappa}\right),
\end{equation}
for some constant $\kappa_6$ independent of $\sigma\in (0,1),$ where we used 
\begin{align*}
    &\left|\tilde{\cI}^r_{\theta \nu}[z, (\varphi-1)\zeta]\right| \lesssim \sigma \,\,\,\,\, (\text{cf. the proof of \cref{T1.1}})\,\,\,\, \text{and}\,\,\,\, |l_1|(\frac{r}{2}\cdot +x_0) \lesssim \sigma^{\upkappa-2}.
    \end{align*}
Since $v$ is in $C^\upkappa(\Rd)$, it follows that
$$\norm{\tilde\zeta}_{L^\infty(\Rd)}=\norm{\tilde\zeta}_{L^\infty(\sB_2)}\leq \norm{\zeta}_{L^\infty(\sB_2)}\lesssim r^\upkappa.$$

Putting these estimates in \eqref{EL2.8E} and calculating the gradient at $z=0$ we obtain
$$|D v(x_0)|\lesssim \sigma^{\upkappa-1},$$
for all $\sigma\in (0,1)$. This proves the H\"{o}lder estimate \eqref{EL2.8A0}.

For the second part, compute the H\"{o}lder ratio with
$D\zeta(0)-D\zeta(z)$ where
$z=\frac{2}{r}(y-x_0)$ for $|x_0-y|\leq \sigma/8$.
This completes the proof.

\end{proof}
Now we can complete the proof of \cref{T1.3}. If $u$ is a solution of the operator inequalities \eqref{Eq-1}, then using \cref{T1.1} we have $|\cL u| \leq CK $. Finally, the proof can be obtained following the same lines as in \cite[Theorem 1.3]{BMS22}. We present it here for the sake of completeness.
\begin{proof}[Proof of \cref{T1.3}]
Since $u=v\delta$ it follows that
$$D u = vD \delta + \delta D v.$$
Since $\delta\in C^{2}(\overline\Omega)$, it follows from
\cref{T1.2} that
$v D\delta\in C^\upkappa(\overline\Omega)$. Thus, we only need to concentrate on $\vartheta=\delta D v$. Consider $\eta$ from
\cref{L4.2} and, without loss of generality, we fix $\eta\in (0, \upkappa)$.

For $|x-y|\geq \frac{1}{8}(\delta(x)\vee\delta(y))$ it follows from
\cref{EL2.8A0} that
$$\frac{|\vartheta(x)-\vartheta(y)|}{|x-y|^\eta}
\leq CK (\delta^\upkappa(x) + \delta^{\upkappa}(y))(\delta(x)\vee\delta(y))^{-\eta}\leq 2CK.
$$
So consider the case $|x-y|< \frac{1}{8}(\delta(x)\vee\delta(y))$.
Without loss of generality, we may assume that
$|x-y|< \frac{1}{8}\delta(x)$. Then we have
\begin{align*}
&\frac{9}{8}\delta(x)\geq |x-y|+\delta(x)\geq \delta(y),\\
&\delta(y)\geq \delta(x)-|x-y|\geq \frac{7}{8}\delta(x).
\end{align*}
By \eqref{grad v estimate} of \cref{L4.2} and the above inequalities,  it follows
\begin{align*}
\frac{|\vartheta(x)-\vartheta(y)|}{|x-y|^\eta}
&\leq |D v(x)|\frac{|\delta(x)-\delta(y)|}{|x-y|^\eta}
+\delta(y)\frac{|D v(x)-D v(y)|}{|x-y|^\eta}
\\
&\lesssim \delta(y)^{\upkappa-1}(\delta(x))^{1-\eta} +
\delta(y) [\delta(x)]^{\upkappa-1-\eta}\\
&\lesssim \delta(x)^{\upkappa-\eta} +
\delta(y) [\delta(x)]^{\upkappa-1-\eta}=\delta(x)^{\upkappa-\eta}\left(1+\frac{\delta(y)}{\delta(x)}\right)\\
&\leq CK(\diam \Omega)^{\upkappa-\eta}.
\end{align*}
This completes the proof.
\end{proof}

\begin{appendices}
  \crefalias{section}{appsec}
\section{Appendix}\label{appendix}
In this section, we aim to present a proof of \cref{TH2.1}. For this purpose,
we first introduce the scaled operator. Let $x_0\in \Omega$ and $r>0,$ we define the doubly scaled operator as
\begin{align}\label{eqA1}
    \cL^{r,s}(x_0)[x,u]=\sup_{\theta \in \Theta} \inf_{\nu \in \Gamma}\left\{\trace a_{\theta \nu}(sr(x-x_0)+sx_0)D^2u(x) +\cI^{r,s}_{\theta \nu}(x_0)[x,u]\right\}
\end{align}
where
\begin{align*}
    \cI^{r,s}_{\theta \nu}(x_0)[x,u]=\int_{\Rd}(u(x+y) -u(x) - \Ind_{\sB_{\frac{1}{sr}}}(y) \grad u(x) \cdot y  )r^{d+2}(s^{d+2}N_{\theta \nu}(rs(x-x_0)+sx_0, sry) \D{y}.
\end{align*}
Further, we define
\begin{equation}\label{eqA2}
    \cL^{0, s}(x_0)[x,u]:= \displaystyle{\sup_{\theta \in \Theta}} \inf_{\nu \in \Gamma} \left\{\trace a_{\theta \nu}(sx_0) D^2 u(x)\right\}.
\end{equation}
Now we give the definition of weak convergence of operators.
\begin{definition}\label{conv def}
    Let $\Omega \subset \Rd$ be open and $0<r<1.$ A sequence of operators $\cL^m$ is said to converge weakly to $\cL$ in $\Omega$, if for any test function $\varphi \in L^{\infty}(\Rd)\cap C^2(\sB_r(x_0))$ for some $\sB_r(x_0) \subset \Omega,$ we have
    \begin{equation*}
        \cL^m[x,\varphi] \to \cL[x, \varphi]\,\,\,\,\,\, \text{uniformly}\,\,\, \text{in}\,\,\, \sB_{\frac{r}{2}}(x_0) \,\,\, \text{as}\,\,\, m \to \infty.
    \end{equation*}
\end{definition}
The next lemma is a slightly modified version of \cite[Lemma 4.1]{MZ21} which can be proved by similar arguments.
\begin{lemma}\label{A1}
    For any $x_0\in \sB_1,$ $r>0$ and $0<s<1,$ Let $\cL^{r,s}(x_0)$ and $\cL^{0,s}(x_0)$ is given by \eqref{eqA1} and \eqref{eqA2} respectively where the \cref{Assmp-1} are satisfied by the corresponding coefficients with $\Omega=\sB_2.$ Moreover, for  given $M, \varepsilon >0$ and a modulus of continuity $\rho,$ there exists $r_0, \eta >0$ independent of $x_0$ and $s$ such that if
   \begin{enumerate}
        \item [(i)] $r<r_0$,  $\cL^{0,s}(x_0)[x,u]=0$ in $\sB_1,$
        \item [(ii)] \begin{align*}
    \cL^{r,s}(x_0)[x,u]+C_0rs|Du(x)| \geq -\eta\,\,\,\, &in\,\,\, \sB_1\\
    \cL^{r,s}(x_0)[x,u]-C_0rs|Du(x)| \leq \eta\,\,\,\, &in\,\,\, \sB_1,\\
    u=v\,\,\, &in \,\,\, \partial \sB_1.
\end{align*}
\item[(iii)] $|u(x)|+|v(x)| \leq M$ in $\Rd$ and $|u(x)-u(y)|+|v(x)-v(y)| \leq \rho(|x-y|)$ for all $x,y \in \overline{\sB}_1,$
    \end{enumerate}
    then we have
    \[|u-v|\leq \varepsilon\] in $\sB_1.$
\end{lemma}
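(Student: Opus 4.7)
The plan is to argue by compactness and contradiction, in the spirit of the Caffarelli--Silvestre approximation technique adapted to the mixed local-nonlocal setting. Suppose the conclusion fails for some prescribed $M, \varepsilon, \rho$. Then one extracts sequences $r_m \downarrow 0$, $\eta_m \downarrow 0$, $x_m \in \sB_1$, $s_m \in (0,1]$ and pairs $(u_m, v_m)$ satisfying (i)--(iii) (with $v_m$ in place of $u$ in (i), as is standard in such approximation lemmas) yet $\norm{u_m - v_m}_{L^\infty(\sB_1)} > \varepsilon$. Along a subsequence $(x_m, s_m) \to (x_\infty, s_\infty)$, and by the common modulus $\rho$ on $\overline{\sB}_1$ together with the uniform bound $M$, Arzelà--Ascoli gives uniform limits $u_m \to u_\infty$ and $v_m \to v_\infty$ on $\overline{\sB}_1$, matching on $\partial \sB_1$.

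I would then identify the equations satisfied by the two limits. For $v_\infty$ the argument is classical: uniform continuity of $a_{\theta\nu}$ from \cref{Assmp-1}(a) gives $a_{\theta\nu}(s_m x_m) \to a_{\theta\nu}(s_\infty x_\infty)$ uniformly in $(\theta,\nu)$, and stability of viscosity solutions yields $\cL^{0,s_\infty}(x_\infty)[x, v_\infty] = 0$ in $\sB_1$. The heart of the argument is the analogous step for $u_\infty$, which reduces to showing that the nonlocal part of $\cL^{r_m, s_m}(x_m)$ applied to any fixed $C^2$ test function is $o(1)$ as $r_m \to 0$. Take $\varphi \in C^2(\Rd) \cap L^\infty(\Rd)$ touching $u_\infty$ strictly from above at $x^* \in \sB_1$; a standard perturbation produces $y_m \to x^*$ and $c_m \to 0$ such that $\varphi + c_m$ touches $u_m$ from above at $y_m$. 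Setting $\psi_m = \varphi + c_m$ on a fixed ball $\sB_\delta(y_m)$ and $\psi_m = u_m$ outside, the subsolution condition reads
\[
\cL^{r_m, s_m}(x_m)[y_m, \psi_m] + C_0 r_m s_m |D\psi_m(y_m)| \geq -\eta_m.
\]

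The key technical estimate is the uniform control of $\cI^{r_m, s_m}_{\theta\nu}(x_m)[y_m, \psi_m]$ in $(\theta, \nu)$. Change variables $z = r_m s_m y$ (so the scaled kernel carries the factor $r_m^2 s_m^2$) and split the integration into $\{|y| < \delta\}$, $\{\delta \leq |y| < 1/(r_m s_m)\}$, and $\{|y| \geq 1/(r_m s_m)\}$. The $C^2$ bound on $\varphi$ handles the first piece via $\int_{|z| < r_m s_m \delta} C|z|^2 k(z)\,\D{z} \to 0$. On the middle piece the integrand is bounded pointwise by $2M + C|y|$, producing terms $2M r_m^2 s_m^2 \int k(z)\,\D{z}$ and $C r_m s_m \int |z| k(z)\,\D{z}$ over $\{r_m s_m \delta \leq |z| < 1\}$; on this range one has $r_m^2 s_m^2 \leq |z|^2/\delta^2$ and $r_m s_m |z| \leq |z|^2/\delta$, so both integrands are dominated by a constant multiple of $|z|^2 k(z)\Ind_{|z|<1}$, integrable under $\int (1 \wedge |z|^2) k(z)\,\D{z} < \infty$ from \cref{Assmp-1}(b), and vanish pointwise as $r_m \to 0$; dominated convergence then forces the contribution to $0$. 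The far piece gives $r_m^2 s_m^2 \int_{|z| \geq 1} 2M k(z)\,\D{z} \to 0$ directly. Combining with $C_0 r_m s_m |D\varphi(y_m)| \to 0$ and the uniform convergence of the local coefficients, the subsolution inequality passes to the limit as $\cL^{0,s_\infty}(x_\infty)[x^*, \varphi] \geq 0$; the mirror argument on test functions from below shows $u_\infty$ is a viscosity solution of the same local equation as $v_\infty$.

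With $u_\infty$ and $v_\infty$ both viscosity solutions of the same uniformly elliptic Bellman--Isaacs equation in $\sB_1$ with coefficients constant in $x$ and identical boundary values, the classical comparison principle forces $u_\infty \equiv v_\infty$, contradicting $\norm{u_m - v_m}_{L^\infty(\sB_1)} > \varepsilon$ in the limit. The principal obstacle is the dominated-convergence estimate on the middle annulus: this is where the built-in scaling factor $r^{d+2} s^{d+2}$ in $\cI^{r,s}_{\theta\nu}$ and the integrability in \cref{Assmp-1}(b) conspire to render the nonlocal contribution infinitesimal as $r \to 0$, and it is the one point at which the structural restriction $\alpha < 2$ (equivalently $\int (1 \wedge |z|^2) k(z)\,\D{z} < \infty$) is genuinely used.
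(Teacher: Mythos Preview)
Your proposal is correct and follows essentially the same route as the paper: the paper states that this lemma is a slight modification of \cite[Lemma~4.1]{MZ21} proved by the same compactness-and-contradiction argument, and explicitly remarks that the only point of departure from \cite{MZ21} is that the nonlocal term $\cI^{r,s}_{\theta\nu}$ acts as a lower-order term vanishing as $r\to 0$, which is exactly the technical estimate you isolate and carry out. Your dominated-convergence bound on the middle annulus is the analytic content behind the paper's invocation of \cite[Lemma~3.1]{MZ21} for the weak convergence $\cL^{r_k,s}(x_0)\to\cL^{0,s}(x_0)$.
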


It is worth mentioning that in \cite{MZ21} the authors have set a uniform continuity assumption on the nonlocal kernels $N_{\theta\nu} (x,y)$ ( for the precise assumption, see Assumption (C) of \cite[p. 391]{MZ21} ) which is a standard assumption to make for the stability property of viscosity solutions. Namely, if we have a sequence of integro-differential operators $\cL^m$ converging weakly to $\cL$ in $\Omega$ and a sequence of subsolutions (or supersolutions) in $\Omega$ converging 
locally uniformly on any compact subset of $\Omega$, then the limit is also a subsolution (or supersolution) with respect to  $\cL$. However, in the case of the operator $\cL^{r,s}$ defined in \eqref{eqA1}, the nonlocal term $ \cI^{r,s}_{\theta \nu}$ can be treated as a lower order term that converges to zero as $r \to 0$ without any kind of continuity assumptions on nonlocal kernels $N_{\theta \nu}$.

Now we give the proof of \cref{TH2.1}.
\begin{proof}[Proof of \cref{TH2.1}]
We will closely follow the proof of \cite[Theorem 4.1]{MZ21}. Fix any $x_0 \in \sB_1$, let $\cL^{r_k,s}(x_0)$ and $\cL^{0,s}(x_0)$ is given by \eqref{eqA1} and \eqref{eqA2} respectively. Then by \cite[Lemma 3.1]{MZ21} as $r_k \to 0,$ we have
\[\cL^{r_k, s}(x_0) \to \cL^{0,s}(x_0),\]
 in the sense of \cref{conv def}.
By interior regularity \cite[Corollary 5.7]{CCbook}, $\cL^{0,s}(x_0)$ has $C^{1,\beta}$ estimate for an universal constant $\beta >0.$ Now without loss of any generality we may assume that $x_0=0$. Also dividing $u$ by $||u||_{L^{\infty}(\Rd)} + K$ in \eqref{eqq2.1} we may assume that $K=1$ and $||u||_{L^{\infty}(\Rd)} \leq 1.$ \\
Using the H\"older regularity  \cite[Lemma~2.1]{MZ21}, we have $u \in C^{\beta}(\sB_1).$ Following \cite[Theorem 52]{CS11}, we will show that there exists $\delta, \mu \in (0, \frac{1}{4}),$ independent of $s$ and a sequence of linear functions $l_k(x)=a_k+b_k x$  such that
\begin{equation}\label{induction}
\begin{aligned}
    \begin{cases}
    (i)\,\,\, \displaystyle{\sup_{B_{2\delta \nu ^k}}}|u-l_k| \leq \mu^{k(1+\gamma)} \;, \\
    (ii)\,\,\, \left|a_k-a_{k-1}\right| \leq \mu^{(k-1)(1+\gamma)} \;,\\
    (iii)\,\,\, \mu^{k-1} |b_k-b_{k-1}|\leq C \mu^{(k-1)(1+\gamma)} \;,\\
    (iv)\,\,\, \left|u-l_k\right| \leq \mu ^{-k(\gamma^{\prime}-\gamma)} \delta^{-(1+\gamma^{\prime})} |x|^{1+\gamma^{\prime}} \,\,\,\, \text{for}\,\,\, x\in \sB^c_{2\delta \mu^k } \;,
    \end{cases}
    \end{aligned}
\end{equation}
where $0<\gamma<\gamma^{\prime}<\beta $ do not depend on $s$. We plan to proceed by induction, when $k=0,$ since $||u||_{L^{\infty}(\Rd)}\leq 1$, \eqref{induction} holds with $l_{-1} = l_0=0$. Assume \eqref{induction}  holds for some $k$ and we shall show \eqref{induction} for $k+1.$

Let $\xi : \mathbb{R}^d \to [0,1]$ be a continuous function such that
\begin{align*}
    \xi(x)=
    \begin{cases}
    1 \,\,\, \text{for} \,\,\, x \in \sB_3 ,\\
    0 \,\,\, \text{for} \,\,\, x \in \sB^c_4 .
 \end{cases} 
\end{align*}
Let us define
\begin{align*}
    w_k(x)=\frac{(u-\xi l_k)(\delta \mu^{k}x)}{\mu ^{k(1+\gamma)}}.
\end{align*}
We claim that there exists a universal constant $C>0$, such that for all $k,$ we have
\begin{equation}\label{vis}
\begin{aligned}
    &\cL^{r_k,s} [x, w_k]-C_0 r_k s |Dw_k(x)| \leq C\delta^2 \mu^{k(1-\gamma)} \leq C\delta^2 ,\\
    &\cL^{r_k,s} [x, w_k]+C_0 r_k s |Dw_k(x)| \geq -C\delta^2 \mu^{k(1-\gamma)} \geq -C\delta^2 ,
\end{aligned}
\end{equation}
in $\sB_2$ in viscosity sense. Let $\phi \in C^2(\sB_2)\cap C(\Rd)$ which touches $w_k$ from below at $x^{\prime}$ in $\sB_2.$ Let 
\[\psi(x):=\mu^{k(1+\gamma)} \phi \left(\frac{x}{\delta \mu ^k}\right)+ \xi l_k(x).\]
Then $\psi \in C^2(\sB_{2\delta \mu^k}) \cap C(\Rd)$ is bounded and touches $u$ from below at $\delta \mu^k x^{\prime}.$ 
Taking $r_k=\delta \mu^k,$ we have 
\[\cI^{r_k, s}_{\theta \nu}[x^{\prime}, \phi]=\delta^2 \mu^{k(1-\gamma)} \cI^s_{\theta \nu}[r_k x^{\prime}, \psi-\xi l_k].\]
Thus we get
\begin{equation*}
\begin{aligned}
    &\cL^{r_k, s}[x^{\prime}, \phi]-C_0 r_k s|D\phi(x^{\prime})|\\
    &=\delta^2 \mu^{k(1-\gamma)}\Big[\sup_{\theta \in \Theta}\inf_{ \nu \in \Gamma} \left\{\trace a_{\theta \nu}(sr_k x^{\prime})D^2 \psi(r_kx^{\prime})+\cI^s_{\theta \nu}[r_k x^{\prime}, \psi-\xi l_k]\right\}-sC_0|D\psi(r_k x^{\prime})-b_k|\Big]\\
    &\leq\delta^2 \mu^{k(1-\gamma)} \Big[\cL^s[r_k x^{\prime}, \psi]-sC_0|D\psi(r_k x^{\prime})|+\sup_{\theta \in \Theta} \inf_{\nu \in \Gamma} \{-\cI^s_{\theta \nu}[r_k x^{\prime}, \xi l_k]\}+s C_0|b_k|\Big]\\
    &\leq C \delta^2 \mu^{k(1-\gamma)} \leq C\delta^2.
\end{aligned}
\end{equation*}
 In the second last inequality we use that $$ \cL^s[x,u]-C_0 s |Du(x)| \leq 1,$$ 
 and $|a_k|$, $|b_k|$ are uniformly bounded and for all $x^{\prime} \in \sB_2$,  $\displaystyle{\sup_{\theta \in \Theta} \inf_{\nu \in \Gamma}} \{-\cI^s_{\theta \nu}[r_k x^{\prime}, \xi l_k]\}$ is bounded independent of $s$ and $k$ .
Thus we have proved 
\[\cL^{r_k,s} [x, w_k]-C_0 r_k s |Dw_k(x)| \leq C\delta^2\,\,\, \text{in}\,\,\, \sB_2 ,\] in viscosity sense. Similarly the other inequality in \eqref{vis} can be proven.

Define $w^{\prime}_k(x):=\max \left\{\min\left\{w_k(x),1\right\}, -1 \right\}.$ We see that $w^{\prime}_k$ is uniformly bounded independent of $k.$ We claim that in $\sB_{\frac{3}{2}}$
\begin{equation}\label{eqA5}
    \begin{aligned}
        &\cL^{r_k,s} [x, w^{\prime}_k]-C_0 r_k s |Dw^{\prime}_k(x)|\leq C\delta^2 + \omega_1(\delta) ,\\
    &\cL^{r_k,s} [x, w^{\prime}_k]+C_0 r_k s |Dw^{\prime}_k(x)| \geq -C\delta^2-\omega_1(\delta)
    \end{aligned}
\end{equation}
Now take any bounded $\phi \in C^2(\sB_2)\cap C(\Rd)$ that touches $w^{\prime}_k$ from below at $x^{\prime}$ in $\sB_{3/2}.$ By the definition of $w^{\prime}_k,$ in $\sB_2$ we have $|w_k|=|w^{\prime}_k| \leq 1$ and $\phi$ touches $w_k$ from below at $x^{\prime}.$ Thus by \cref{viscosity defn} of viscosity supersolution we have 
\begin{align*}
&\sup_{\theta \in \Theta} \inf_{\nu \in \Gamma}\Big\{\trace a_{\theta \nu} (sr_k x^{\prime})D^2\phi(x^{\prime})\\
&+\int_{\sB_{1/2}}(\phi(x^{\prime}+z)+\phi(x^{\prime})-\Ind_{\sB_{\frac{1}{r_k s}}}(z)D\phi(x^{\prime})\cdot z)(r_ks)^{d+2}N_{\theta \nu}(r_k sx, sr_kz)\D{z}\\
&-\int_{\Rd \setminus \sB_{1/2}}(w_k(x^{\prime}+z)-w^{\prime}_k(x^{\prime}+z)\\
&+w^{\prime}_k(x^{\prime}+z)-\phi(x^{\prime})-\Ind_{\sB_{\frac{1}{r_k s}}}(z)D\phi(x^{\prime})\cdot z))(r_ks)^{d+2}N_{\theta \nu}(r_k s x, sr_kz)\D{z}\Big\}\\
    &-C_0r_ks |D\phi(x^{\prime})| \leq C\delta^2
\end{align*}
Now we use the bounds on the kernel to get the following estimate:
\begin{equation*}
    \begin{aligned}
        \cL^{r_k, s}[x, w^{\prime}_k]-C_0r_ks|Dw^{\prime}_k(x)| &\leq \int_{\Rd\setminus \sB_{1/2}}\left|w_k(x^{\prime}+z)-w^{\prime}_k(x^{\prime}+z)\right| (r_ks)^{d+2} k(r_k sz)\D{z}+ C \delta^2.
    \end{aligned}
\end{equation*}
in the viscosity sense. By the inductive assumptions, we have $a_k $ and $b_k$ uniformly bounded. Since $\left|\left|u\right|\right|_{L^{\infty}(\Rd)} \leq 1$ and $\xi l_k$ is uniformly bounded, $\left|w_k\right| \leq C \mu^{-k (1+\gamma)} $ in $\Rd.$ Using $(iv)$ from \eqref{induction} we have
\[|w_k(x)|=\frac{(u-\xi l_k)(r_kx)}{\mu^{k(1+\gamma)}} \leq \left(\frac{1}{r_k}\right)^{1+\gamma^{\prime}}|r_k x|^{1+\gamma^{\prime}}=|x|^{1+\gamma^{\prime}} ,\] for any $x\in \sB^c_2 \cap \sB_{\frac{2}{r_k}}.$ Again for any $x \in \sB^c_{2/r_k}, $ we find \[|w_k(x)|\leq C\mu^{-k(1+\gamma^{\prime})}\cdot \mu^{-k(\gamma-\gamma^{\prime})} \leq C\mu^{-k(1+\gamma^{\prime})} \leq C\frac{\delta^{1+\gamma^{\prime}}}{2}|x|^{1+\gamma^{\prime}}\leq C|x|^{1+\gamma^{\prime}} .\]
Now, since $w^{\prime}_k$ is uniformly bounded, we have for $x\in \sB^c_2,$ 
\begin{equation}\label{eqA6}
  |w_k|+|w^{\prime}_k-w_k|\leq C\min\{|x|^{1+\gamma^{\prime}}, \mu ^{-k(1+\gamma)}\}.  
\end{equation}
For $x^{\prime}\in \sB_{3/2},$ using \eqref{eqA6} we have the following estimate.

\begin{align*}
&\int_{\Rd}\left|w_k(x^{\prime}+z)-w^{\prime}_k(x^{\prime}+z)\right| (r_k s)^{d+2} k(r_k sz) \D{z} \\
&\leq \int_{\{z:|x^{\prime}+z|\geq 2\}\cap \sB_{1/r_k}}\left|w_k-w^{\prime}_k\right|(x^{\prime}+z)(r_k s)^{d+2} k(r_k sz) \D{z} + \delta^2 \mu^{k(1-\gamma)}\int_{\sB^c_{\frac{1}{r_k}}} \frac{(r_k s)^{d+2} k(r_k sz) }{(\delta \mu^{-k})^2} \D{z}\\
&\leq C \Big[ \int_{\sB^c_{1/2}\cap \sB_{\frac{1}{\sqrt{r_k}}}}|z|^2 (r_k s)^{d+2} k(r_k sz) \D{z} + r^{\frac{(1-\gamma^{\prime})}{2}}_k \int_{\sB^c_{\frac{1}{\sqrt{r_k}}}\cap \sB_{\frac{1}{r_k}}} |z|^2 (r_k s)^{d+2} k(r_k sz) \D{z}\\
&+ \delta^2 \mu^{k(1-\gamma)}\int_{\sB^c_{s}} s^2k(z) \D{z} \Big] \\ 
      &\leq C \Big[\int_{\sB_{\sqrt{r_k}}}|y|^2 k(y) \D{y} + (r^{\frac{(1-\gamma^{\prime})}{2}}_k + \delta^2 \mu^{k(1-\gamma)} )  \int_{\Rd}(1\wedge|y|^2) k(y)\D{y}\Big] . \\
        \end{align*}
Hence,
\begin{equation*}
    \begin{aligned}
    \int_{\Rd}\left|w_k(x^{\prime}+z)-w^{\prime}_k(x^{\prime}+z)\right| k^{r_k, s}(z) \D{z} \leq \tilde{C}\left(\int_{\sB_{\sqrt{\delta}}}|y|^2k(y) \D{y}+\delta^{\frac{1-\gamma^{\prime}}{2}}+\delta^2\right)=\omega_1(\delta)
    \end{aligned}
\end{equation*}
where $\omega_1(\delta) \to 0$ as $\delta \to 0.$ Therefore we proved $\cL^{r_k,s} [x, w^{\prime}_k]-C_0 r_k s |Dw^{\prime}_k(x)|\leq C\delta^2 + \omega_1(\delta).$ The other inequality of \eqref{eqA5} can be proved in a similar manner. 

Since $w^{\prime}_k$ satisfies the equation \eqref{eqA5}, by \cite[Lemma 2.1]{MZ21} we have $||w^{\prime}_k||_{C^{\beta}(\overline{\sB_1})} \leq M_1$ for some $M_1$ independent of $k, s.$ Now we consider the a function $h$ which solves
\begin{equation*}
    \begin{aligned}
        &\cL^{0,s}(x_0)[x, h]=0\,\,\,\, &in \,\,\, \sB_1\\
        &h=w^{\prime}_k\,\,\,\, &on \,\,\, \partial \sB_1.
    \end{aligned}
\end{equation*}
Existence of such $h$ can be seen from \cite[Theorem 1]{S2010}. Moreover, using \cite[Theorem 2]{S2010} we have $||h||_{C^{\alpha}(\overline{\sB_1})} \leq M_2$ where $\alpha < \frac{\beta}{2}$ and $M_2$ is independent of $k,s.$ Now for any $0<\varepsilon <1,$ let $r_0:=r_0(\varepsilon)$ and $\eta:=\eta(\varepsilon)$ as given in \cref{A1}. Also for $x\in \sB_1$ and $\delta:=\delta(\varepsilon) \leq r_0,$ we have 
\begin{equation*}
    \begin{aligned}
        &\cL^{r_k,s} [x, w^{\prime}_k]+C_0 r_k s |Dw^{\prime}_k(x)|\geq -\eta ,\\
    &\cL^{r_k,s} [x, w^{\prime}_k]-C_0 r_k s |Dw^{\prime}_k(x)| \leq \eta.
    \end{aligned}
\end{equation*}
Therefore by \cref{A1}, we conclude $|w^{\prime}_k-h| \leq \varepsilon$ in $\sB_1.$ Again by using \cite[Corollary 5.7]{CCbook}, we have $h \in C^{1, \beta}(\sB_{1/2})$ and we can take a linear part $l(x):=a+bx$ of $h$ at the origin. By $C^{1, \beta}$ estimate of $\cL^{0, s}(x_0)$ and $|w^{\prime}_k| \leq 1$ in $\sB_1$ we obtain that the coefficients of $l,$ i.e, $a, b$ are bounded independent of $k,s.$ Further for $x\in \sB_{1/2},$ we have \[|h(x)-l(x)| \leq C_1|x|^{1+\beta} ,\] where $C_1$ is independent of $k,s.$ Hence using the previous estimate we get
\[ |w^{\prime}_k(x)-l(x)| \leq \epsilon + C_1 |x|^{1+\beta} \,\,\,\, in\,\, \sB_{1/2}.\] Again using \eqref{eqA6} and $|w_k|\leq 1$ in $\sB_2$ we have
\begin{equation*}
    \begin{aligned}
        &|w_k(x)-l(x)| \leq 1+|a|+|b| \leq C_2\,\,\, in \,\,\, \sB_1,\\
        &|w_k(x)-\xi(\delta \mu^k x)l(x)| \leq C|x|^{1+\gamma^{\prime}}+C_3 |x|\,\,\, in \,\,\, \sB^c_1.
    \end{aligned}
\end{equation*}
Next defining 
\begin{equation*}
\begin{aligned}
    &l_{k+1}(x):=l_k(x)+\mu^{k(1+\gamma)}l\left(\delta^{-1}\mu^{-k}x\right),\\
    &w_{k+1}(x):=\frac{(u-\xi l_{k+1})(\delta\mu^{k+1}x)}{\mu^{(k+1)(1+\gamma)}},
    \end{aligned}
\end{equation*}
and following the proof of \cite[Theorem 4.1]{MZ21} we conclude that \eqref{induction} holds for $k+1.$ This completes the proof.
\end{proof}
\end{appendices}

\noindent\textbf{Acknowledgement.}
We thank Anup Biswas for several helpful discussions during the preparation of this article.  Mitesh Modasiya is partially supported by CSIR PhD fellowship (File no. 09/936(0200)/2018-EMR-I). We also thank the referee for his/ her useful comments, which improved the manuscript.

\bigskip
\noindent\textbf{Conflict of interest.} On behalf of all authors, the corresponding author states that there is no conflict of interest.

\bigskip
\noindent\textbf{Data availability.} This manuscript has no associated data.

%

\end{document}